\documentclass[11pt]{article}
\usepackage{amsmath}
\usepackage{amsthm}
\usepackage{amsfonts}
\usepackage{amssymb}
\usepackage{latexsym}
\usepackage{diagbox}
\usepackage{mathtools}
\usepackage{comment}

\usepackage{array}
\usepackage{subcaption}
\usepackage{booktabs}
\usepackage{tkz-graph}
\usepackage{graphics,graphicx}
\usepackage{multirow}
\usepackage{enumitem}
\usepackage{tikz}

\usepackage{tikz}
\usepackage{tikz-cd}

\usetikzlibrary{matrix,arrows}
\usetikzlibrary{fit}
\usetikzlibrary{patterns}
\usetikzlibrary{chains,fit,shapes}
\usetikzlibrary{positioning,calc}
\usetikzlibrary{graphs}
\usetikzlibrary{arrows,decorations.markings}
\usepackage[colorlinks,
linkcolor=blue,
anchorcolor=blue,
citecolor=blue
]{hyperref}

\newtheorem{thm}{Theorem}[section]   
\newtheorem{prop}[thm]{Proposition}
\newtheorem{cor}[thm]{Corollary}

\newtheorem{rem}[thm]{Remark}

\DeclareMathOperator{\asc}{asc}
\DeclareMathOperator{\des}{des}
\DeclareMathOperator{\rmax}{rmax}
\DeclareMathOperator{\lmax}{lmax}
\DeclareMathOperator{\rmin}{rmin}
\DeclareMathOperator{\lmin}{lmin}

\begin{document}

\begin{center}
{\large \bf  Counting permutations avoiding two flat partially ordered patterns}
\end{center}
\begin{center}
Shiqi Cao$^{1}$, Huihua Gao$^{2}$, Sergey Kitaev$^{3}$, and Yitian Li$^{4}$\\[6pt]

$^{1,2,4}$Center for Combinatorics, LPMC, Nankai University, Tianjin 300071, P. R. China

 $^{3}$Department of Mathematics, \\
Department of Mathematics and Statistics, University of Strathclyde, 26 Richmond Street, Glasgow G1 1XH, United Kingdom \\[6pt]

 Emails: $^{1}${\tt shiqicao@mail.nankai.edu.cn},
        $^{2}${\tt huihuagao@mail.nankai.edu.cn},
        $^{3}${\tt sergey.kitaev@strath.ac.uk},
       $^{4}${\tt yitianli@mail.nankai.edu.cn}.
\end{center}

\noindent\textbf{Abstract.}
Partially ordered patterns (POPs) play an important role in the study of permutation patterns, providing a convenient framework for describing large families of classical patterns. The problem of enumerating permutations that avoid POPs has therefore attracted considerable attention in the literature. In particular, Gao and Kitaev resolved many counting problems for POP-avoiding permutations of lengths 4 and 5, linking the enumeration to a wide range of other combinatorial objects.

Motivated by their work, we initiate the study of permutations that simultaneously avoid two POPs belonging to the class of flat POPs. We establish a connection between permutations avoiding such POPs and the $k$-Fibonacci numbers. Moreover, we provide a bijection between permutations avoiding these POPs and certain restricted permutations, which allows us to use the method developed by Balti\'{c} to derive the generating function for permutations avoiding these POPs. Finally, we obtain enumerative results for separable permutations avoiding these two POPs, of lengths up to 5, with respect to six statistics, thereby extending the results of Gao et al.\ on the avoidance of a single flat POP in separable permutations.  Notably, when both patterns are of length 5, the respective generating function is a rational function, with the sum in the numerator (resp., denominator) containing 293 (resp., 17) monomials.\\
	
\noindent {\bf Keywords:}  partially ordered pattern,  joint distribution,  $k$-Fibonacci number,  restricted permutation,  separable permutation
	
\section{Introduction}\label{intro-sec}
Let $S_n$ be the set of all permutations of $[n]:=\{1,2,\ldots,n\}$. A {\em (classical) pattern} is a permutation. A permutation $\pi_1\pi_2\cdots\pi_n\in S_n$ avoids a pattern $p=p_1p_2\cdots p_k\in S_k$  if there is no subsequence $\pi_{i_1}\cdots\pi_{i_k}$, where $1\le i_1<\cdots< i_k\le n$, such that $\pi_{i_j}<\pi_{i_r}$ if and only if $p_j<p_r$.
For example, the permutation $123465$ avoids the pattern $321$.
In this paper we use $\varepsilon$ to denote the empty permutation. For a permutation $\pi=\pi_1\pi_2\cdots\pi_n$, its {\em reverse} is the permutation $r(\pi)=\pi_n\pi_{n-1}\cdots \pi_1$ and its {\em complement} is the permutation $c(\pi)=c(\pi_1)c(\pi_2)\cdots c(\pi_n)$ where $c(x)=n+1-x$. Also, the length of a permutation $\pi$, denoted by $|\pi|$, is the number of elements in $\pi$. For example, for $\pi=423165$, $|\pi|=6$, $r(\pi)=561324$, and $c(\pi)=354612$. Throughout this paper, we often use \emph{g.f.} as an abbreviation for ``generating function''.
	
Define a \emph{partially ordered pattern} ({POP}) $P$ of length $k$ by a $k$-element partially ordered set (poset) labeled by the elements in $\{1,2,\ldots,k\}$. An occurrence of $P$ in a permutation $\pi_1\cdots\pi_n\in S_n$ is a subsequence $\pi_{i_1}\cdots\pi_{i_k}$, where $1\le i_1<\cdots< i_k\le n$,  such that $\pi_{i_j}<\pi_{i_m}$ if and only if $j<m$ in $P$. Thus, a classical pattern of length $k$ corresponds to a $k$-element chain. For example, the POP $p=$ \hspace{-3.5mm}
\begin{minipage}[c]{3.5em}
    \centering
    \begin{tikzpicture}[scale=0.5]
        \draw [line width=1](0,-0.5)--(0,0.5);
        
        \foreach \x/\y in {0/-0.5,1/-0.5,0/0.5}
            \draw (\x,\y) node [scale=0.4, circle, draw, fill=black]{};
        
        \node [left] at (0,-0.6){\footnotesize 3};
        \node [right] at (1,-0.6){\footnotesize 2};
        \node [left] at (0,0.6){\footnotesize 1};
    \end{tikzpicture}
\end{minipage}
	occurs six times in the permutation \(34152\), namely, as the subsequences \(341, 342, 312, 352, 412,\) and \(452\).
Clearly, avoiding the POP $P$ is the same as avoiding the patterns \(312\), \(321\), and \(231\) at the same time. We let $S_n(P_1,\ldots,P_m)$ denote the set of permutations in $S_n$ that simultaneously avoid the given patterns $P_1,\ldots,P_m$. POP-avoiding permutations have been studied by numerous authors; see, for example, \cite{BursteinKitaev2008,GaoKitaev2019,Gao2026,HKZ,Kitaev2007,WangYan,YapWehlauZaguia2021}.

In this paper, we let $P_j$ and $\widetilde{P}_\ell$ be the POPs of the form shown in Figure~\ref{pic-Pk}, known as {\em flat POPs}. We assume $j\geq 2$ and $\ell\geq 2$.

\begin{figure}[htbp]
  \begin{center}
\begin{tabular}{cc}  
\begin{tikzpicture}[scale=0.85]
    \coordinate (T) at (2.1,1.6);
    \coordinate (a) at (0,0);
    \coordinate (b) at (1.2,0);
    \coordinate (c) at (2.4,0);
    \coordinate (d) at (4.2,0);

    \draw[line width=1] (T)--(a);
    \draw[line width=1] (T)--(b);
    \draw[line width=1] (T)--(c);
    \draw[line width=1] (T)--(d);

    \node[circle,draw,fill=black,inner sep=1.2pt] at (T) {};
    \node[circle,draw,fill=black,inner sep=1.2pt] at (a) {};
    \node[circle,draw,fill=black,inner sep=1.2pt] at (b) {};
    \node[circle,draw,fill=black,inner sep=1.2pt] at (c) {};
    \node[circle,draw,fill=black,inner sep=1.2pt] at (d) {};

    \node[scale=0.15,circle,draw,fill=black] at (3.05,0) {};
    \node[scale=0.15,circle,draw,fill=black] at (3.30,0) {};
    \node[scale=0.15,circle,draw,fill=black] at (3.55,0) {};

    \node[above] at (T) {\small $1$};
    \node[below] at (a) {\small $2$};
    \node[below] at (b) {\small $3$};
    \node[below] at (c) {\small $4$};
    \node[below] at (d) {\small $j$};
  \end{tikzpicture}
  &
    \begin{tikzpicture}[scale=0.85]
    \coordinate (B)   at (2.6,0);
    \coordinate (u1)  at (0.8,1.6);
    \coordinate (u2)  at (2.0,1.6);
    \coordinate (u3)  at (3.2,1.6);
    \coordinate (uL)  at (5.0,1.6);

    \draw[line width=1] (B)--(u1);
    \draw[line width=1] (B)--(u2);
    \draw[line width=1] (B)--(u3);
    \draw[line width=1] (B)--(uL);

    \node[scale=0.4,circle,draw,fill=black] at (B)  {};
    \node[scale=0.4,circle,draw,fill=black] at (u1) {};
    \node[scale=0.4,circle,draw,fill=black] at (u2) {};
    \node[scale=0.4,circle,draw,fill=black] at (u3) {};
    \node[scale=0.4,circle,draw,fill=black] at (uL) {};

    \node[scale=0.15,circle,draw,fill=black] at (3.8,1.6) {};
    \node[scale=0.15,circle,draw,fill=black] at (4,1.6) {};
    \node[scale=0.15,circle,draw,fill=black] at (4.2,1.6) {};

    \node[below] at (B)  {$\ell$};
    \node[above] at (u1) {$1$};
    \node[above] at (u2) {$2$};
    \node[above] at (u3) {$3$};
    \node[above] at (uL) {$\ell-1$};
  \end{tikzpicture}
  \end{tabular}
  \end{center}
  \vspace{-0.5cm}
  \caption{The POPs $P_j$ (to the left) and $\widetilde{P}_\ell$ (to the right).} 
  \label{pic-Pk}
\end{figure}
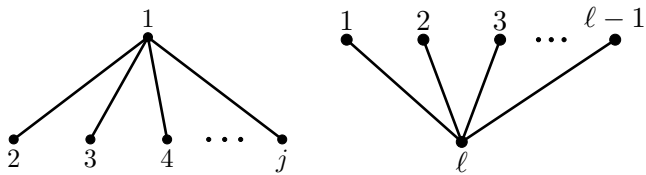

Note that $|S_n(P_j,\widetilde{P}_\ell)|=|S_n(P_\ell,\widetilde{P}_j)|$ for any $n \ge 0$, which can be demonstrated bijectively using the composition of reverse and complement operations. 

Recall that the Fibonacci numbers $\{F_n\}_{n\ge 0}$ are defined by $F_0=0$, $F_1=1$, and for $n\geq 2$,
\[
F_n=F_{n-1}+F_{n-2}.
\]
More generally, for $k\geq 2$, the $k$-Fibonacci numbers $\{F_n^{(k)}\}_{n\ge 0}$ are given by $F_1^{(k)}=1$, and for $n\geq 2$, adopting  the convention that $F_r^{(k)}=0$ for $r\leq 0$,
\begin{equation}\label{k-Fib-rec}
F_n^{(k)}=F_{n-1}^{(k)}+F_{n-2}^{(k)}+\cdots+F_{n-k}^{(k)}.
\end{equation}
In Section~\ref{s1}, we establish a connection between the $k$-Fibonacci numbers and permutations simultaneously avoiding $P_j$ and $\widetilde{P}_\ell$.

Let $\pi=\pi_1\pi_2\cdots\pi_n \in S_n$ and let $a,b$ be two integers. We call $\pi$ a \emph{restricted permutation} if, for all $i$ with $1 \le i \le n$, it satisfies $-a < \pi_i - i < b$.  
Denote the number of such permutations by $N(n,a,b)$.

In 2009, Kl{\o}ve~\cite{Klove2009} gave a method for deriving the generating function of $N(n,a,b)$ in the special case $a=b$. In 2010, Balti{\'c}~\cite{Bal2010} presented a method for obtaining the generating function of $N(n,a,b)$ for $a \le b$ by solving a system of equations.  
We establish a connection between restricted permutations and permutations simultaneously avoiding $P_j$ and $\widetilde{P}_{\ell}$ in Section~\ref{s3}.

Let $\pi=\pi_1\pi_2\cdots\pi_m \in S_m$ and $\sigma=\sigma_1\sigma_2\cdots\sigma_n \in S_n$. 
The \emph{direct sum}\index{direct sum $\oplus$} $\oplus$ and the \emph{skew sum}\index{skew sum $\ominus$} $\ominus$ are defined by constructing the permutations $\pi\oplus\sigma$ and $\pi\ominus\sigma$ as follows:
\begin{eqnarray*}
(\pi\oplus\sigma)_i &=& \left\{
\begin{array}{ll}
\pi_i, & \mbox{if } 1 \le i \le m,\\
\sigma_{i-m}+m, & \mbox{if } m+1 \le i \le m+n,
\end{array}
\right.\\
(\pi\ominus\sigma)_i &=& \left\{
\begin{array}{ll}
\pi_i+n, & \mbox{if } 1 \le i \le m,\\
\sigma_{i-m}, & \mbox{if } m+1 \le i \le m+n.
\end{array}
\right.
\end{eqnarray*}
For instance, $231\oplus312=231645$ and $231\ominus312=564312$.

The \emph{separable permutations} are those that can be built from the permutation $1$ by repeatedly applying the operations $\oplus$ and $\ominus$.
We assume that the empty permutation $\varepsilon$ is separable, although some papers make the opposite assumption.
Bose, Buss, and Lubiw~\cite{BBL98} introduced the notion of separable permutations in 1998, and it is well known~\cite{Kitaev2011Patterns} that the set of all separable permutations of length $n \ge 1$ is precisely $S_n(2413,3142)$. Note that the set of separable permutations is closed under both the reverse and complement operations.

Separable permutations appear in the literature in various contexts (see, for example, \cite{AAV2011,AHP2015,AJ2016,FuLinZeng2018,GaoLiu2024}).
It is known, and easy to see, that every permutation $\pi \in S_n(2413,3142)$ admits a decomposition that can be represented schematically as in Figure~\ref{sepStructure}, where $\pi$ is represented by its permutation diagram with a dot at $(i,\pi_i)$ for each $i$. Thus,
\begin{align*}
\pi = L_1 L_2 \cdots L_m\, n\, R_m R_{m-1} \cdots R_1.
\end{align*}
Here,
\begin{itemize}
\item[(i)] For $1 \le i \le m$, both $L_i$ and $R_i$ are required to be nonempty, except for $L_m$ and $R_1$;
\item[(ii)] Each $L_i$ and each $R_i$ consists of consecutive values;
\item[(iii)] $R_1 < L_1 < R_2 < L_2 < \cdots < R_m < L_m$, where for two permutation blocks $A$ and $B$, the relation $A < B$ means that every element of $A$ is less than every element of $B$. In particular, $R_1$, if it is nonempty, contains $1$.
\end{itemize}

\noindent
For example, if $\pi=32176854$, then $L_1=321$, $L_2=76$, $R_1=\varepsilon$, and $R_2=54$.
	
	\begin{figure}[htbp]
		\begin{center}
			\begin{tikzpicture}[line width=0.5pt,scale=0.24]
				\coordinate (O) at (0,0);
				
				\path (25,1)  node {$n$};
				\draw [dashed] (O)--++(50,0);
				\fill[black!100] (O)++(25,0) circle(1.5ex);
				
				\draw (20,-1) rectangle (24,-3);
				\path (22,-2)  node {$L_m$};
				\path (13.5,-2)  node {possibly empty$\boldsymbol{\rightarrow}$};
				
				\draw [dashed] (0,-4)--++(50,0);
				\draw [dashed] (0,-8)--++(50,0);
				\draw [dashed] (0,-17)--++(50,0);
				\draw [dashed] (0,-21)--++(50,0);
				\draw [dashed] (0,-25)--++(50,0);
				
				\draw [dashed] (25,0)--++(0,-29);
				\draw [dashed] (31,0)--++(0,-29);
				\draw [dashed] (37,0)--++(0,-29);
				\draw [dashed] (43,0)--++(0,-29);
				
				\draw [dashed] (19,0)--++(0,-29);
				\draw [dashed] (13,0)--++(0,-29);
				\draw [dashed] (7,0)--++(0,-29);
				
				\draw (26,-5) rectangle (30,-7);
				\path (28,-6)  node {$R_m$};
				
				\fill[black!100] (O)++(17,-9) circle(1.0ex);
				\fill[black!100] (O)++(16,-10) circle(1.0ex);
				\fill[black!100] (O)++(15,-11) circle(1.0ex);
				\draw (8,-14) rectangle (12,-16);
				\path (10,-15)  node {$L_2$};
				
				\draw (2,-22) rectangle (6,-24);
				\path (4,-23)  node {$L_1$};

				\fill[black!100] (O)++(32,-11) circle(1.0ex);
				\fill[black!100] (O)++(33,-12) circle(1.0ex);
				\fill[black!100] (O)++(34,-13) circle(1.0ex);
				\draw (38,-18) rectangle (42,-20);
				\path (40,-19)  node {$R_2$};
				
				\draw (45,-26) rectangle (49,-28);
				\path (47,-27)  node {$R_1$};
				\path (38.5,-27)  node {possibly empty$\boldsymbol{\rightarrow}$};
				
			\end{tikzpicture}
			\caption{Stankova's decomposition of separable permutation~\cite{Stankova1994}. Each $L_i$ and $R_j$ is a separable permutation.}\label{sepStructure}
		\end{center}
	\end{figure}
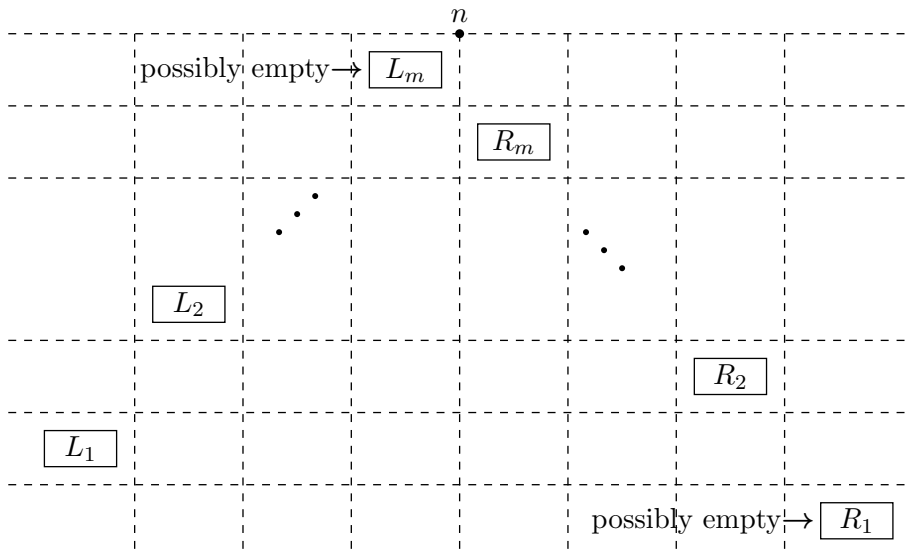

Let $\pi=\pi_1\pi_2\cdots\pi_n \in S_n$. For each $1 \le i \le n-1$, we call $i$ an \emph{ascent} of $\pi$ if $\pi_i < \pi_{i+1}$, and a \emph{descent} if $\pi_i > \pi_{i+1}$. The numbers of ascents and descents of $\pi$ are denoted by $\asc(\pi)$ and $\des(\pi)$, respectively. An element $\pi_i$ is a \emph{right-to-left maximum} (resp., \emph{right-to-left minimum}) if it is larger (resp., smaller) than every element to its right. We write $\rmax(\pi)$ and $\rmin(\pi)$, respectively, for the numbers of right-to-left maxima and minima in $\pi$. Dually, an element $\pi_i$ is a \emph{left-to-right maximum} (resp., \emph{left-to-right minimum}) if it is larger (resp., smaller) than every element to its left. The corresponding counts are denoted by $\lmax(\pi)$ and $\lmin(\pi)$. For example, if $\pi=31452$, then 
\[
\mathrm{lrmax}(\pi)=3, \quad 
\mathrm{lrmin}(\pi)=\mathrm{rlmin}(\pi)=\mathrm{rlmax}(\pi)=\mathrm{asc}(\pi)=\mathrm{des}(\pi)=2.
\]

Gao, Kitaev, Li, and Ruan~\cite{Gao2026} studied separable permutations avoiding a single POP and derived generating functions for the simultaneous distribution of the statistics $\mathrm{lrmax}$, $\mathrm{lrmin}$, $\mathrm{rlmax}$, $\mathrm{rlmin}$, $\mathrm{des}$, and $\mathrm{asc}$. We extend the results in~\cite{Gao2026} by studying the g.f.\
\begin{eqnarray*}
F_{j,\ell}(x,p,q,u,v,s,t)
:= \sum_{n \ge 0} x^n \sum_{\sigma \in S_n(2413,3142,P_j,\widetilde{P}_{\ell})}
p^{\mathrm{asc}(\sigma)} q^{\mathrm{des}(\sigma)} u^{\mathrm{lmax}(\sigma)} v^{\mathrm{rmax}(\sigma)} s^{\mathrm{lmin}(\sigma)} t^{\mathrm{rmin}(\sigma)},
\end{eqnarray*}
which gives the simultaneous distributions of the six statistics on separable permutations that simultaneously avoid $P_j$ and $\widetilde{P}_{\ell}$. Note that under the composition of reverse and complement, the statistics $\mathrm{lmax}$ and $\mathrm{rmin}$ (resp., $\mathrm{lmin}$ and $\mathrm{rmax}$) are exchanged. Therefore,
\begin{eqnarray}\label{inv-com}
F_{j,\ell}(x,p,q,u,v,s,t)=F_{\ell,j}(x,p,q,t,s,v,u).    
\end{eqnarray}

Hence, in our derivations we may assume that $j \le \ell$.

If $j=2$ or $\ell=2$, only the increasing permutation $12\cdots n$ belongs to $ S_n(2413,3142,P_2,\widetilde{P}_{2})$, so for $j\geq 2$ and $\ell\geq 2$,
\begin{eqnarray}\label{eq:1.1}
    F_{2,\ell}(x, p, q, u, v, s, t)  =  F_{j,2}(x, p, q, u, v, s, t)  = 1 + \sum_{n\geq 1} p^{n-1} u^{n} v s t^{n} x^{n}
= 1 + \frac{uvstx}{1 - putx}.
\end{eqnarray}

A system of functional equations in Section~\ref{most-general-sec} determines $F_{j,\ell}(x,p,q,u,v,s,t)$ for all $3 \le j,\ell \le 5$, and we also compute explicit forms of these generating functions as follows. The cases $j=\ell=3$, $j=3$, $\ell=4$, and $j=3$, $\ell=5$ are given in Theorems~\ref{33-thm}, \ref{34-thm}, and \ref{35-thm}, respectively. The cases $j=\ell=4$ and $j=4$, $\ell=5$ are covered by Theorems~\ref{44-thm} and \ref{45-thm}, respectively. Finally, the case $j=\ell=5$ is given by  Theorem~\ref{55-thm}. Notably, our formula for $F_{4,4}(x,p,q,u,v,s,t)$ is a rational function, with the sum in the numerator (resp., denominator) containing 49 (resp., 7) monomials, our formula for $F_{4,5}(x,p,q,u,v,s,t)$ is a rational function, with the sum in the numerator (resp., denominator) containing 93 (resp., 10) monomials, and our formula for $F_{5,5}(x,p,q,u,v,s,t)$ is a rational function, with the sum in the numerator (resp., denominator) containing 293 (resp., 17) monomials.
    
\section{$k$-Fibonacci numbers and permutations avoiding $P_j$ and $\widetilde{P}_{\ell}$}\label{s1}
In this section, we introduce a relation between $k$-Fibonacci numbers and permutations simultaneously avoiding the patterns $P_j$ and $\widetilde{P}_{\ell}$. 

In the following theorem, recall that $|S_n(P_j,\widetilde{P}_3)| = |S_n(P_3,\widetilde{P}_j)|$ by the composition of the reverse and complement operations. Note also that in the case \(j=\ell=3\), avoiding the POPs is equivalent to simultaneously avoiding the classical patterns \(231\), \(312\), and \(321\). This case therefore follows from a known result, first established in \cite{SimSch}.

\begin{thm}\label{thmt3}
For $j \geq 3$ and $n\geq 0$, $a_n := |S_n(P_j,\widetilde{P}_3)|=|S_n(P_3,\widetilde{P}_j)|=F^{(j-1)}_{n+1}$.
\end{thm}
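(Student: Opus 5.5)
We prove the first equality $|S_n(P_j,\widetilde{P}_3)|=|S_n(P_3,\widetilde{P}_j)|$ by the reverse-complement bijection noted in the introduction, so only $a_n=|S_n(P_j,\widetilde{P}_3)|$ needs computing. The plan is to translate both POP conditions into local conditions on $\pi$, decompose $\pi$ by the position of the entry $1$, and obtain the recurrence
\[
a_n=a_{n-1}+a_{n-2}+\cdots+a_{n-(j-1)}, \qquad a_0=1,\ a_r=0 \text{ for } r<0 .
\]
Since $F^{(j-1)}_{1}=1$, $F^{(j-1)}_{r}=0$ for $r\le 0$, and $F^{(j-1)}$ satisfies (\ref{k-Fib-rec}) with $k=j-1$, the sequences $a_n$ and $F^{(j-1)}_{n+1}$ obey the same recurrence with the same initial values. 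Hence $a_n=F^{(j-1)}_{n+1}$.

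\textbf{Reformulating the patterns.} An occurrence of $P_j$ is an entry followed by $j-1$ entries smaller than it, in any relative order. So $\pi$ avoids $P_j$ if and only if every entry has at most $j-2$ smaller entries to its right. An occurrence of $\widetilde{P}_3$ is an entry preceded by two entries larger than it. So $\pi$ avoids $\widetilde{P}_3$ if and only if every entry has at most one larger entry to its left.

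\textbf{Decomposition.} Let $\pi\in S_n(P_j,\widetilde{P}_3)$ with $n\ge1$. Every entry to the left of $1$ is larger than $1$, so at most one entry precedes $1$. There are two cases.
\begin{itemize}
\item If $\pi_1=1$, then $\pi=1\oplus\sigma$. Here $\sigma$ is an arbitrary element of $S_{n-1}(P_j,\widetilde{P}_3)$, because the leading $1$ plays no role in either condition.
\item Otherwise $\pi=m\,1\cdots$ with $m\ge2$. Each $k\in\{2,\dots,m-1\}$ already has $m$ as a larger entry to its left, so no other larger entry may precede it. Consequently the entries $2,\dots,m-1$ appear in increasing order. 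Moreover, no entry $y>m$ can precede any of them, since otherwise $k$ would have both $m$ and $y$ to its left. Hence
\[
\pi = m\,1\,2\cdots(m-1)\,\oplus\,\sigma, \qquad \sigma\in S_{n-m}.
\]
Now $m$ has exactly $m-1$ smaller entries to its right, so $P_j$-avoidance forces $m\le j-1$. The entries of $\sigma$ exceed everything in the prefix, so $\sigma$ must avoid both patterns, and any such $\sigma$ works.
\end{itemize}

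\textbf{Converse and conclusion.} We must also check that every permutation of the form $1\oplus\sigma$, or $m12\cdots(m-1)\oplus\sigma$ with $2\le m\le j-1$ and $\sigma$ an avoider, actually avoids both patterns. Entries of $\sigma$ never have smaller entries of the prefix to their right, nor larger entries of the prefix to their left. Within the prefix, each entry has at most one larger entry to its left, and at most $m-1\le j-2$ smaller entries to its right. Counting the cases $m=1,\dots,j-1$ then gives exactly the recurrence above, which completes the proof.

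The step needing the most care is the structural claim in the case $m\ge2$: that the prefix must be exactly $m12\cdots(m-1)$ with everything larger than $m$ placed afterwards. The check above handles it; the rest is bookkeeping.
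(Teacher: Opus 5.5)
Your proof is correct and follows essentially the same route as the paper. You split on whether $1$ sits in position $1$ or $2$, show that in the latter case the prefix is forced to be $m12\cdots(m-1)$ with $2\le m\le j-1$, and match the resulting recurrence and initial values with those of $F^{(j-1)}_{n+1}$. Your version is in fact more careful than the paper's, since you justify why the prefix is exactly $m12\cdots(m-1)$ followed by a shifted avoider, and you verify the converse direction explicitly.
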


\begin{proof}
It is sufficient to consider $S_n(P_j,\widetilde{P}_3)$. Suppose $\pi=\pi_1\pi_2\cdots\pi_n \in S_n(P_j,\widetilde{P}_3)$. We must have $\pi_1=1$ or $\pi_2=1$; otherwise, $\pi_1\pi_2 1$ forms an occurrence of $\widetilde{P}_3$. 

\noindent \textbf{Case 1:} $\pi_1 = 1$. In this case, $\pi_1$ cannot be part of an occurrence of $P_j$ or $\widetilde{P}_3$ in $\pi$, so we only need to ensure that $\pi_2\pi_3\cdots\pi_n$ avoids $P_j$ and $\widetilde{P}_3$. There are $a_{n-1}$ such permutations.

\noindent \textbf{Case 2:} $\pi_2 = 1$. In this case, $\pi_1 \in \{2, 3, \dots, j-1\}$; otherwise, the elements in the set $\{1,2,\ldots,j-1,\pi_1\}$ would form an occurrence of $P_j$ in $\pi$, which is impossible. 

If $\pi_1=2$, then $\pi_1\pi_2 = 21$ cannot be involved in any occurrence of $P_j$ or $\widetilde{P}_3$ in $\pi$, and the remaining permutation $\pi_3\pi_4\cdots\pi_n$ can be chosen in $a_{n-2}$ ways.  

If $\pi_1=3$, we must have $\pi_3=2$, since otherwise the elements in the set $\{2,3,\pi_3\}$ would form an occurrence of $\widetilde{P}_{3}$, which is impossible. Then $\pi_1\pi_2\pi_3 = 312$ does not affect the rest of $\pi$ ($312$ cannot be involved in any occurrence of $P_j$ or $\widetilde{P}_3$), and there are $a_{n-3}$ ways to choose $\pi_4\pi_5\cdots\pi_n$.  

In general, if $\pi_1=s$ with $s \le j-1$, we have $\pi_1\pi_2\cdots\pi_s = s123\cdots(s-1)$, and there are $a_{n-s}$ ways to choose $\pi_{s+1}\pi_{s+2}\cdots\pi_n$.

The cases above give the recurrence relation
\[
a_n = a_{n-1} + a_{n-2} + a_{n-3} + \dots + a_{n-j+1}.
\]
The desired result now follows by comparing this recurrence with \eqref{k-Fib-rec} and observing that the initial conditions are satisfied (the empty permutation $\varepsilon$ avoids the patterns). 
\end{proof}

\section{Restricted permutations and permutations avoiding $P_j$ and $\widetilde{P}_{\ell}$}\label{s3}

The following theorem gives a relation between restricted permutations and permutations that simultaneously avoid $P_j$ and $\widetilde{P}_\ell$.

\begin{thm}\label{bij}
We have $\pi=\pi_1\cdots\pi_n\in S_n(P_j,\widetilde{P}_{\ell})$ if and only if $-\ell+2\le \pi_i-i\le j-2$.
\end{thm}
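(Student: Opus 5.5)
First I will translate the two patterns into conditions on single entries. Reading the poset for $P_j$ from Figure~\ref{pic-Pk}, an occurrence of $P_j$ is a subsequence $\pi_{i_1}\pi_{i_2}\cdots\pi_{i_j}$ whose first entry exceeds all of the following $j-1$ entries, with no conditions among those $j-1$ entries. So $\pi$ avoids $P_j$ if and only if every entry $\pi_i$ has at most $j-2$ smaller entries to its right. Symmetrically, an occurrence of $\widetilde{P}_\ell$ is a subsequence whose last entry is smaller than all of the $\ell-1$ entries preceding it. Hence $\pi$ avoids $\widetilde{P}_\ell$ if and only if every entry $\pi_i$ has at most $\ell-2$ larger entries to its left. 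The proof then reduces to relating these two inversion counts to the displacement $\pi_i-i$.

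Fix $i$. Let $r_i$ be the number of entries to the right of position $i$ that are smaller than $\pi_i$, and let $l_i$ be the number of entries to the left of $i$ that are larger than $\pi_i$. Counting values below $\pi_i$ gives $\pi_i-1=r_i+(\text{number of smaller entries to the left})$. The number of smaller entries to the left equals $(i-1)-l_i$. Therefore
\[
\pi_i-i=r_i-l_i .
\]
This gives the easy direction (avoidance implies the bounds) immediately: $\pi_i-i\le r_i\le j-2$ and $\pi_i-i\ge -l_i\ge -(\ell-2)$.

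For the converse I cannot read the bounds off $r_i-l_i$ alone, since a single entry could have $r_i$ large with $l_i$ also large. Instead I argue by contradiction. Suppose $\pi_i$ has at least $j-1$ smaller entries to its right, and among all such $i$ take the one with the largest value $\pi_i$. I claim that then $l_i=0$. Indeed, any larger entry $\pi_k$ with $k<i$ would also have those same $j-1$ smaller entries to its right, contradicting the maximality of $\pi_i$. With $l_i=0$ we get $\pi_i-i=r_i\ge j-1$, violating the upper bound. Symmetrically, suppose some entry has at least $\ell-1$ larger entries to its left, and take the one with the smallest value. Any smaller entry to its right would inherit those larger entries to its left, contradicting minimality. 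So $r_i=0$, and hence $\pi_i-i=-l_i\le-(\ell-1)$, violating the lower bound.

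The main obstacle is exactly this extremal choice in the converse. I expect it to go through cleanly because the "smaller entries to the right" of an entry form a set that is inherited by any larger entry further left. The remaining care is checking that I have read the Hasse diagrams correctly (which element is top versus bottom, and the position labels), since swapping them would interchange the roles of $j$ and $\ell$ in the final inequality.
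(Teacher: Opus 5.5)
Your proof is correct. Your reading of the posets is the right one: position $1$ is the maximum in $P_j$ and position $\ell$ is the minimum in $\widetilde{P}_\ell$, which matches how the paper uses them in the proof of Theorem~\ref{thmt3}.

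The easy direction is the same count the paper does with $A_1,B_1,C_1$: the values below $\pi_{i_0}$ that are not to its right must fit into the $i_0-1$ positions to its left. Your identity $\pi_i-i=r_i-l_i$ packages this, and it covers every violation $\pi_i-i\ge j-1$. The paper only writes out the case $\pi_{i_0}=i_0+j-1$, although the same count handles $\ge$.

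The converse is where you genuinely differ.
\begin{itemize}
\item The paper makes no extremal choice. It takes any top element $\pi_{i_0}$ of an occurrence of $P_j$ and uses $\pi_{i_0}\le i_0+j-2$ from the hypothesis. Then $\pi_{i_0}$ and its $j-1$ smaller successors are $j$ values in $\{1,\dots,i_0+j-2\}$, all in positions $\ge i_0$. By pigeonhole, some position $k<i_0$ carries a value $\ge i_0+j-1$, so $\pi_k-k\ge j$. The violating entry is located by counting, and it sits to the left of the occurrence.
\item You instead use the fact that the property ``at least $j-1$ smaller entries to the right'' is inherited by every larger entry further left. Maximality then forces $l_i=0$, so the violator is a left-to-right maximum and its displacement equals $r_i$.
\end{itemize}
Both arguments are equally short. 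Yours has the advantage that one identity drives both directions and both patterns. You also write out the $\widetilde{P}_\ell$ converse explicitly, which the paper omits as analogous.
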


\begin{proof}  First, we prove that $\pi_i - i \le j-2$ if and only if $\pi$ avoids $P_j$. Assume that there exists $i_0 \in [n]$ such that $\pi_{i_0} = i_0 + j - 1$. Let
\[
A_1 = \{1,2,\dots,i_0 + j - 2\}
\quad \text{and} \quad
B_1 = \{\pi_{i_0+1}, \pi_{i_0+2}, \dots, \pi_n\}.
\]
To avoid the POP $P_j$, the number of elements in $B_1$ that are less than $\pi_{i_0}$ must not exceed $j-2$. That is, $|A_1 \cap B_1| \le j-2$. Let $C_1 = \{\pi_1, \pi_2, \dots, \pi_{i_0-1}\}$. Then
\[
|A_1 \cap C_1| \ge (i_0 + j - 2) - (j-2) = i_0,
\]
which leads to a contradiction.

Conversely, assume that there exists an $i_0 \in [n]$ that is involved in an occurrence of the POP $P_j$. Then, for $i>i_0$, there are at least $j-1$ elements that are less than $\pi_{i_0}$. Let
\[
A_2 = \{1,2,\dots,i_0+j-2\}, \quad
B_2 = \{\pi_{i_0}, \pi_{i_0+2}, \dots, \pi_n\},
\quad \text{and} \quad
C_2 = \{\pi_1, \pi_2, \dots, \pi_{i_0-1}\}.
\]
 we have $|A_2 \cap B_2| \ge j$. Consequently,
\[
|A_2 \cap C_2| \le (i_0 + j - 2) - j = i_0 - 2.
\]
Thus, there must be at least one element of $C_2$ that is greater than $i_0 + j - 2$, which leads to a contradiction.

Similarly, we prove that $-\ell+2 \le \pi_i - i$ if and only if $\pi$ avoids $\widetilde{P}_{\ell}$. Assume that there exists $k_0 \in [n]$ such that $\pi_{k_0} = k_0 - \ell + 1$. Let
\[
A'_1 = \{k_0 - \ell + 2, \dots, n\}, \quad
B'_1 = \{\pi_1, \dots, \pi_{k_0-1}\}, \quad
\text{and} \quad
C'_1 = \{\pi_{k_0+1}, \dots, \pi_n\}.
\]
To avoid the POP $\widetilde{P}_{\ell}$, the number of elements in $A'_1$ that are less than $\pi_{k_0}$ must not exceed $\ell-2$. That is, $|A'_1 \cap B'_1| \le \ell-2$. Consequently,
\[
|A'_1 \cap C'_1| \ge |A'_1| - (\ell-2) = n - (k_0 - \ell + 2) - (\ell - 2) = n - k_0,
\]
which is impossible. The converse is proved analogously to the first part and is therefore omitted. The theorem follows.
\end{proof}

Let $\hat{N}(n,\ell,j)$ be the number of restricted permutations in $S_n$ satisfying $-\ell+2 \le \pi_i - i \le j-2$. Balti\'{c}~\cite{Bal2010} presented a method for deriving the g.f.\ of $\hat{N}(n,\ell,j)$ in the case $\ell \le j$, which is also the g.f.\ of $\hat{N}(n,j,\ell)$ by Theorem~\ref{bij} and the fact that $|S_n(P_j,\widetilde{P}_\ell)| = |S_n(P_\ell,\widetilde{P}_j)|$. Thus, this method can be employed to compute $|S_n(P_j,\widetilde{P}_\ell)|$. Finally, we note that the connection established by Balti\'{c}~\cite{Bal2010} between restricted permutations and the $k$-Fibonacci numbers is consistent with Theorem~\ref{bij} and our findings in Section~\ref{s1}.

\section{The g.f.\ $F_{j,\ell}(x,p,q,u,v,s,t)$ for any $3 \leq j,\ell \leq 5$}\label{most-general-sec}

Since the POPs $P_3$ and $\widetilde{P}_3$ occur in the patterns $2413$ and $3142$ defining separable permutations, we have the following proposition.

\begin{prop}\label{Proposition1}
For any $j\ge3$ and $\ell\ge3$, we have
\begin{align*}
S_n(2413,3142, P_j, \widetilde{P}_{3}) &= S_n(P_j, \widetilde{P}_{3}),\\
S_n(2413,3142, P_3, \widetilde{P}_{\ell}) &= S_n(P_3, \widetilde{P}_{\ell}).
\end{align*}
\end{prop}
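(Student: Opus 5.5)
The plan is to prove the two equalities by showing that every permutation avoiding $\widetilde{P}_3$ (resp.\ $P_3$) automatically avoids both $2413$ and $3142$. The inclusion ``$\subseteq$'' is trivial in each line, so only ``$\supseteq$'' needs an argument, namely that avoiding the POP forces separability.

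The key observation concerns what an occurrence of $\widetilde{P}_3$ is. It is a subsequence $\pi_{i_1}\pi_{i_2}\pi_{i_3}$ in which the last element is smaller than the first two, and the first two are unrelated. So $\pi$ avoids $\widetilde{P}_3$ exactly when it avoids both $231$ and $321$. Dually, an occurrence of $P_3$ is a subsequence whose first element exceeds the other two, so avoiding $P_3$ is the same as avoiding $312$ and $321$.

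Now inspect the patterns $2413$ and $3142$.
\begin{itemize}
\item In $2413$, the subsequence $241$ is an occurrence of $231$, hence of $\widetilde{P}_3$. Also $413$ is an occurrence of $312$, hence of $P_3$.
\item In $3142$, the subsequence $342$ is an occurrence of $231$, hence of $\widetilde{P}_3$. Also $312$ (the first, second and fourth letters) is an occurrence of $312$, hence of $P_3$.
\end{itemize}

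Since pattern containment is transitive, any $\pi$ containing $2413$ or $3142$ contains both $\widetilde{P}_3$ and $P_3$. Therefore $S_n(P_j,\widetilde{P}_3)\subseteq S_n(2413,3142)$ and $S_n(P_3,\widetilde{P}_\ell)\subseteq S_n(2413,3142)$, which gives both equalities for all $j,\ell\ge 3$.

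There is no real obstacle here. The only care needed is in identifying correctly which three-letter classical patterns each POP corresponds to, and in checking the four explicit subsequence occurrences above.
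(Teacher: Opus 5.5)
Your proof is correct and follows the same idea as the paper, which justifies the proposition only by remarking that $P_3$ and $\widetilde{P}_3$ occur in both $2413$ and $3142$, so avoiding either POP already forces separability. You make that remark precise: you identify $\widetilde{P}_3$ with $\{231,321\}$ and $P_3$ with $\{312,321\}$, and you exhibit the explicit occurrences $241$, $413$, $342$ and $312$.
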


Proposition~\ref{Proposition1} allows us to use the method developed in~\cite{Bal2010} to compute the generating functions.  However, Balti\'{c}~\cite{Bal2010} does not consider any statistics. In this section, we derive a system of functional equations for $F_{j,\ell}(x,p,q,u,v,s,t)$ for any $3 \leq j,\ell \leq 5$. To do this, for any $3 \leq j,\ell \leq 5$ and $0\leq i \leq j-2$, we define
\begin{align*}
f_{j,\ell,i}(x,p, q, u,v,s,t) := &\sum_{n \geq 1} x^n \sum_{\pi} p^{\asc(\pi)}q^{\des(\pi)}u^{\lmax(\pi)}v^{\rmax(\pi)}s^{\lmin(\pi)}t^{\rmin(\pi)},\\
g_{j,\ell,i}(x,p, q, u,v,s,t) :=& \sum_{n \geq 2} x^n \sum_{\sigma} p^{\asc(\sigma)}q^{\des(\sigma)}u^{\lmax(\sigma)}v^{\rmax(\sigma)}s^{\lmin(\sigma)}t^{\rmin(\sigma)},
\end{align*}
where the sums are over all $\pi$ (resp., $\sigma$)  in $S_n(2413, 3142, P_j,\widetilde{P}_{\ell})$ with $i$ elements to the right of $n$
and  with the element $n-1$, if $n-1$ exists, to the left (resp., right) of $n$. We will use $f_{j,\ell,i}$ (resp., $g_{j,\ell,i}$) to represent $f_{j,\ell,i}(x, p, q, u, v, s, t)$ (resp., $g_{j,\ell,i}(x, p, q, u, v, s, t)$).

We also  define
\begin{align*}G_j(x,p,q,u,v,s,t):=\sum_{n\geq 0}x^n\sum_{\sigma \in S_n(2413,3142,P_j) }p^{\asc(\sigma)}q^{\des(\sigma)}u^{\lmax(\sigma)}v^{\rmax(\sigma)}s^{\lmin(\sigma)}t^{\rmin(\sigma)},
\end{align*}
\begin{align*}
\widetilde{G_{\ell}}(x,p,q,u,v,s,t):=\sum_{n\geq 0}x^n\sum_{\sigma \in S_n(2413,3142,\widetilde{P}_{\ell}) }p^{\asc(\sigma)}q^{\des(\sigma)}u^{\lmax(\sigma)}v^{\rmax(\sigma)}s^{\lmin(\sigma)}t^{\rmin(\sigma)}.
\end{align*}
We will use $G_j$ (resp., $\widetilde{G_{\ell}}$) to represent $G_j(x,p,q,u,v,s,t)$ (resp., $\widetilde{G_{\ell}}(x,p,q,u,v,s,t)$).


\begin{thm}\label{thm jll}
For any $3 \leq j,\ell \leq 5$,  the g.f.\ $F_{j,\ell}:=F_{j,\ell}(x, p,q,u, v,s,t)$ is given by the system
\begin{align}
F_{j,\ell} &= 1 + \sum\limits_{i=0}^{j-2} \Big(f_{j,\ell,i}+ g_{j,\ell,i}\Big), \label{eq:5.1}\\[6pt]
f_{j,\ell,i} &= \; pquvx\Bigg(\sum\limits_{a=1}^{\ell-3}U_a(p, q, u, 1, s, 1)x^aV_i(p, q, 1, v, s, t)x^i\Bigg)\nonumber \\
            &\quad + p^2quvx\Bigg(\sum\limits_{a=1}^{\ell-3}U_a(p, q, u, 1, 1, 1)x^aV_i(p, q, 1, v, 1, t)x^i\Bigg)\big(F_{j,\ell}(x,p,q,u,1,s,t)-1 \big),\label{eq:5.2}\\[6pt]
g_{j,\ell,i} &= quvsxW_i(p,q,1,v,s,t)x^i 
            + pquvxW_i(p,q,1,v,1,t)x^i\big(F_{j,\ell}(x,p,q,u,1,s,t)-1\big) \nonumber \\
            &\quad + \sum\limits_{h=1}^{\min\{i-1, \ell-4\}}pq^2uvxX_h(p,q,1,v,1,1)x^h\nonumber\\
            &\quad \cdot
               \sum\limits_{m=1}^{\ell-h-3}\big(Z_m(p,q,u,1,s,1)x^mY_{i-h}(p,q,1,v,s,t)x^{i-h}\big)\nonumber \\
           &\quad + \sum\limits_{h=1}^{\min\{i-1, \ell-4\}}p^2q^2uvxX_h(p,q,1,v,1,1)x^h\nonumber\\
            &\quad \cdot
               \sum\limits_{m=1}^{\ell-h-3}\big(Z_m(p,q,u,1,1,1)x^mY_{i-h}(p,q,1,v,1,t)x^{i-h}\big) \big(F_{j,\ell}(x,p,q,u,1,s,t)-1\big), \label{eq:5.3}
\end{align}
where $1\leq i \leq j-2$, $X_n:=X_n(p, q, u, v, s, t)$ is the coefficient of $x^n$ in the g.f.\ for separable permutations of length  $n$ with respect to six statistics: $\asc$, $\des$, $\lmax$, $\rmax$, $\lmin$, and $\rmin$, $U_n:=U_n(p, q, u, v, s, t)$ (resp., $V_n:=V_n(p, q, u, v, s, t)$, $W_n:=W_n(p, q, u, v, s, t)$, $Y_n:=Y_n(p, q, u, v, s, t)$, and $Z_n:=Z_n(p, q, u, v, s, t)$) is the coefficient of $x^n$ in 
$G_{j-i}$ (resp., $\widetilde G_{\ell-a-1}$, $\widetilde G_{\ell-1}$, $\widetilde G_{\ell-m-h-1}$, and $G_{j-(i-h)}$).
The initial conditions are
\begin{align}\label{eq:5.4}
\begin{cases}
f_{j,\ell,0}& = uvstx+puvtx\left(F_{j,\ell}(x, p, q, u, 1, s, t)-1\right),\\
g_{j,\ell,0}&= 0, \\
f_{2,2,0} &= \frac{uvstx}{1 - putx}.
\end{cases}
\end{align}
\end{thm}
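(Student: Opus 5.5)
We follow the standard decomposition approach for separable permutations, built on Stankova's decomposition (Figure~\ref{sepStructure}) and on where $n$ and $n-1$ sit. First, a nonempty $\pi\in S_n(2413,3142,P_j,\widetilde P_\ell)$ has at most $j-2$ elements to the right of $n$. Otherwise $n$ together with $j-1$ elements to its right would form an occurrence of $P_j$; equivalently, Theorem~\ref{bij} gives $n-i\le j-2$ for $\pi_i=n$. So $\pi$ falls into exactly one class, according to the number $i\in\{0,\dots,j-2\}$ of elements right of $n$ and whether $n-1$ lies left (the $f$'s) or right (the $g$'s) of $n$. This gives \eqref{eq:5.1}.

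The initial conditions come next. If $i=0$, then $n$ is last, so no element $n-1$ can lie right of it, and $g_{j,\ell,0}=0$. We then write $\pi=\pi'n$. Appending $n$ at the end creates no new occurrence of $P_j$ (it is maximal and last) nor of $\widetilde P_\ell$ (it cannot be the minimum). It also creates no $2413$ or $3142$, since a largest last element can only play the role of the last letter, which is ``3'' or ``2'' in those patterns, and neither is the maximum. The statistics change as follows: $\rmax$ becomes $1$, which is the specialization $v=1$ on $\pi'$ times $v$; $\lmax$ gains one factor $u$; $\rmin$ gains $t$; the ascent gives $p$; and $\lmin$ is unchanged. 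The case $\pi'=\varepsilon$ gives $uvstx$. Together these give the first line of \eqref{eq:5.4}, and the $(2,2)$ case is \eqref{eq:1.1}.

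For $i\ge1$ and $n-1$ left of $n$, we use Stankova's decomposition with $R_m\neq\varepsilon$ of size $i$. We split according to whether the left part $L_1\cdots L_{m-1}$ below $R_m$ is empty or not, which matches the two summands of \eqref{eq:5.2}: the nonempty remainder contributes $F_{j,\ell}(x,p,q,u,1,s,t)-1$ with an extra ascent factor $p$. The block $L_m$ containing $n-1$ has size $a$, and $a\le \ell-3$ is forced by $\widetilde P_\ell$ using an element of $R_m$ as the bottom. Then $L_m$ must avoid $P_{j-i}$, because each element of $L_m$ already has the $i$ smaller elements of $R_m$ to its right; this gives $U_a$ from $G_{j-i}$. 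Symmetrically, $R_m$ must avoid $\widetilde P_{\ell-a-1}$, because $L_m$ and $n$ are larger elements to its left; this gives $V_i$. The statistic bookkeeping is handled by specializing variables. $\lmax$ lives only in $L_m n$, which is why $v=1$ and $t=1$ there. $\rmax$ and $\rmin$ come from $R_m$. $\lmin$ comes from whichever component sits leftmost, which is why $s$ is kept in the first summand and set to $1$ in the second. Finally, the factors $pq\,uvx$ record $n$ together with the ascent into $n$ and the descent out of $n$.

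The case where $n-1$ lies right of $n$ gives \eqref{eq:5.3}. Either $n\,(n-1)$ starts $R_m$ with nothing of $R_m$ on the larger-left side, which is the $W_i$ terms with $\widetilde G_{\ell-1}$. Or the right side itself decomposes recursively as a skew/direct structure: a separable block $X_h$ of size $h\le\min\{i-1,\ell-4\}$ containing $n-1$, then a left block $Z_m$ avoiding $G_{j-(i-h)}$, and a right block $Y_{i-h}$ avoiding $\widetilde G_{\ell-m-h-1}$. The main obstacle is this case. The difficulty is verifying that every permutation in the class decomposes uniquely into exactly these pieces, and that the avoidance constraints on each piece are precisely the shifted $G$ and $\widetilde G$ conditions. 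In particular we must check that for $j,\ell\le5$ no cross-block occurrences of $P_j$ or $\widetilde P_\ell$ arise beyond those accounted for; this is where the restriction $3\le j,\ell\le5$ enters. To settle it, we would first enumerate the possible relative positions of the at most $3$ elements right of $n$, checking case by case that the extra constraints reduce exactly to the size bounds on $h$, $m$, and $a$. We would then confirm the resulting formulas against direct enumeration for small $n$.
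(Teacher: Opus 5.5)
\textbf{There is a genuine gap: Case~2 (formula \eqref{eq:5.3}) is not proved, and Case~1 assumes $|R_m|=i$ without justification.} Your treatment of $i=0$, and of the counting and statistics in the two $f$-summands, is fine. The trouble is that the argument stops exactly where the hypothesis $\ell\le 5$ is needed.

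In Case~1 you simply take $|R_m|=i$. You must rule out elements to the right of $n$ lying outside $R_m$. If $|R_m|<i$, then $R_{m-1}\neq\varepsilon$, hence $L_{m-1}\neq\varepsilon$. Any element of $R_{m-1}$ then has the nonempty blocks $L_{m-1}$, $L_m$, $n$, $R_m$ above it and to its left. This is an occurrence of $\widetilde P_5$, and hence of $\widetilde P_\ell$. The step is not a formality. The permutation $24531$ is separable, avoids $P_4$ and $\widetilde P_6$, has $n-1$ left of $n$, and has $|R_m|=1<2=i$; for $\ell=6$ the analogue of \eqref{eq:5.2} would miss it. Only after this step do you know that $m\le 2$ and that the ``remainder'' is the single free block $L_1$ counted by $F_{j,\ell}(x,p,q,u,1,s,t)-1$.

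The real gap is Case~2, which yields \eqref{eq:5.3}. You transcribe the shape of the formula and then propose case checking plus numerical confirmation for small $n$; that is not a proof. Your first alternative is also misdescribed. The condition is that $R_m$ contains all $i$ elements to the right of $n$; $n$ and $n-1$ need not be adjacent. For example, $312$ is counted by a $W_2$ term. What has to be shown is the following.
\begin{enumerate}
\item Since $n-1$ is right of $n$, $L_m=\varepsilon$ and $n-1\in R_m$.
\item If $|R_m|=i$, then $m\le 2$, so $\pi=nR_m$ or $\pi=L_1nR_2$ with $L_1$ an arbitrary nonempty member of the class. Also $R_m$ must avoid $\widetilde P_{\ell-1}$ because $n$ precedes it. This gives the two $W_i$ terms.
\item If $|R_m|=h<i$, then $L_{m-1}$ and $R_{m-1}$ are nonempty. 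An element of $R_{m-1}$ sees $L_{m-1}$, $n$ and $R_m$ above it to the left. This forces $h+|L_{m-1}|\le \ell-3$, so $h\le\ell-4$. Hence $h=1$, $R_m=\{n-1\}$ is unconstrained, and the inner summation index runs up to $\ell-h-3$.
\item The same observation forces $R_{m-1}$ to avoid $\widetilde P_{\ell-|L_{m-1}|-h-1}$.
\item $L_{m-1}$ must avoid $P_{j-(i-h)}$, because of the $i-h$ smaller elements of $R_{m-1}$ to its right.
\item $L_{m-2}$ may be any nonempty member of the class, which gives the factor $F_{j,\ell}(x,p,q,u,1,s,t)-1$. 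However, $R_{m-2}$ must be empty: its elements would have $L_{m-2},L_{m-1},n,R_m,R_{m-1}$ above and to the left, again a $\widetilde P_5$. This also shows that $R_{m-1}$ holds all the remaining $i-h$ elements.
\end{enumerate}
You would still need to add the sufficiency check and the statistics bookkeeping, for instance the extra descent from $R_m$ into $R_{m-1}$ that produces $q^2$. This is the content your proposal has yet to supply.
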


\begin{proof}
We make use of Stankova’s decomposition of separable permutations~\cite{Stankova1994}, as illustrated in Figure~\ref{sepStructure}.

When $j=\ell=2$, the only permutation in $S_n(2413, 3142, P_2,\widetilde{P}_{2})$ is $\pi=12\cdots (n-1)n$.
Clearly, there exists no element to the right of $n$ in $\pi$, hence the only value for $i$ is 0. Hence,
\begin{align*}
g_{2,2,0} = 0,
~
\mathrm{and}
~
f_{2,2,0}= \sum_{n\geq 1}p^{n-1}u^nvst^nx^n= \frac{uvstx}{1 - putx}.
\end{align*}
Moreover, we have
\begin{align*}
F_{2,2} &= 1+f_{2,2,0} +g_{2,2,0}
=1+\frac{uvstx}{1 - putx},
\end{align*}
which is consistent with  \eqref{eq:5.1}--\eqref{eq:5.3} and the initial conditions \eqref{eq:5.4}.

When  $3 \leq j,\ell \leq 5$,
it is clear that
\begin{align*}
F_{j,\ell}
&=1+\sum_{i=0}^{j-2} f_{j,\ell,i}+\sum_{i=0}^{j-2} g_{j,\ell,i}.
\end{align*}

\noindent  If $1 \leq i \leq j-2$, let $\pi$ be a permutation in $S_n(2413, 3142, P_j,\widetilde{P}_{\ell})$ with $i$ elements to the right of $n$. Since $i\geq 1$, it is obvious that $n\geq 2$ and $n-1$ exist in $\pi$. We have two cases depending on whether the element $n-1$ is to the right or to the left of $n$ in $\pi$.

\noindent \textbf{Case 1.} $n-1$ is to the left  of $n$ in $\pi$, as shown in Figure~\ref{sepStructure_thm3_g_1}.
 Note that $A$ is non-empty since $n-1$ is in $A$. From $i\geq 1$, we get that $B$ is also non-empty.
There are two subcases to consider.
\begin{figure}[htbp]
     \centering
\begin{tikzpicture}[line width=0.5pt,scale=0.24]
	\coordinate (O) at (0,0);
		\path (15,1)  node {$n$};
		\draw [dashed] (-3,0)--++(30,0);
        \draw [dashed] (-3,-5)--++(30,0);
						\draw [dashed] (-3,-10)--++(30,0);
                            \draw [dashed] (-3,-15)--++(30,0);
						\fill[black!100] (O)++(15,0) circle(1.5ex);
						\draw (10,-1) rectangle (14,-4);
						\path (12,-2.5)  node {$A$};
						\draw (16,-6) rectangle (20,-9);
						\path (18,-7.5)  node {$B$};
						\draw (4,-11) rectangle (8,-14);
						\path (6,-12.5)  node {$C$};
                            \draw (22,-16) rectangle (26,-19);
						\path (24,-17.5)  node {$D$};
                            \draw (-2,-16) rectangle (2,-19);
                            \path (0,-17.5)  node {$E$};
						\draw [dashed] (15,0)--++(0,-20);
						\draw [dashed] (9,0)--++(0,-20);
						\draw [dashed] (3,0)--++(0,-20);
						\draw [dashed] (21,0)--++(0,-20);
					\end{tikzpicture}
					\caption{A schematic representation of the permutation diagrams for permutations in Case 1.}
        \label{sepStructure_thm3_g_1}
				\end{figure}
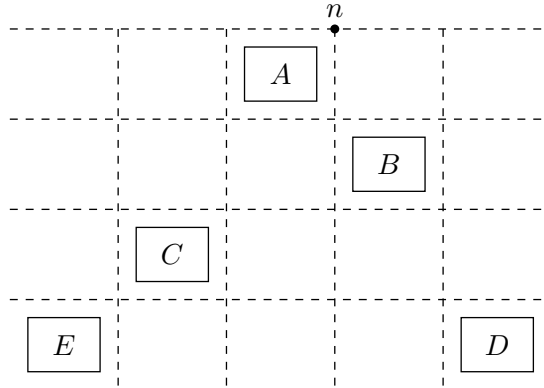
\begin{itemize}
\item[(a)]
$B$ contains $i$ elements,  which implies that $D$ is empty and hence $E$ is also empty. If $C$ is empty, then $A$  contains at most $\ell-3$ elements, or otherwise, $\ell-2$ elements in $A$ together with $n$ and an element in $B$ will form an occurrence of the pattern $\widetilde{P}_{\ell}$. Assume that $A$ contains $a$ elements, $1\leq a\leq \ell-3$, and recall that $U_n$ and $V_n$ are the coefficients of $x^n$ in $G_{j-i}$ and $\widetilde G_{\ell-a-1}$, respectively.
Since  $A$ does not contribute to the statistics rmax and rmin in $\pi$, and it can be
any non-empty separable permutation of length~$a$ avoiding  $P_{j-i}$, we have the g.f.\
$\sum\limits_{a=1}^{\ell-3} U_a(p, q, u, 1, s, 1)x^a$.
Moreover, $B$ does not contribute to the statistic lmax in $\pi$ and it can be
any non-empty separable permutation of length~$i$ avoiding  $\widetilde P_{\ell-a-1}$. The  g.f.\ is then $V_i(p,q,1,v,s,t)x^i$.
Note that $n$ contributes one extra ascent, one extra descent, one extra left-to-right maximum, and one extra right-to-left maximum.
Hence, in this subcase, we have the term
\begin{align}
&pquvx \sum_{a=1}^{\ell-3} U_a(p, q, u, 1, s, 1)x^a V_i(p,q,1,v,s,t)x^i.  \label{eq:5.5}
\end{align}

If $C$ is non-empty, then it is a separable permutation avoiding $P_j$ and $\widetilde P_{\ell}$ that 
does not contribute to the statistic rmax in $\pi$.
Thus, the g.f.\ is
$F_{j,\ell}(x,p,q,u,1,s,t)-1$.
Note that the subpermutation
$AnB$  does not contribute to the statistic
lmin  in $\pi$ and an extra ascent is formed between $AnB$ and $C$. Hence, from\eqref{eq:5.5}, in this subcase we have the term
\begin{align}\label{eq:5.6}
p^2quvx \sum_{a=1}^{\ell-3} U_a(p, q, u, 1, 1, 1)x^a V_i(p,q,1,v,1,t)x^i(F_{j,\ell}(x,p,q,u,1,s,t)-1).
\end{align}

\item[(b)]
$B$ contains $h$ elements,  where $1\leq h \leq i-1$ and $i \geq 2$. This implies that $C$ and $D$ are both non-empty, when  $3 \leq j,\ell \leq 5$, and they will result in an occurrence of the pattern $\widetilde P_{\ell}$, which is a contradiction. 
\end{itemize}

\noindent
Combining \eqref{eq:5.5} and \eqref{eq:5.6}, we have (\ref{eq:5.2}). \\[6pt] 
\noindent \textbf{Case 2.} $n-1$ is to the right  of $n$ in $\pi$, as shown in Figure~\ref{sepStructure_thm3_f_1}.
Note that $A$ is non-empty since $n-1$ belongs to $A$.
There are two subcases to consider.

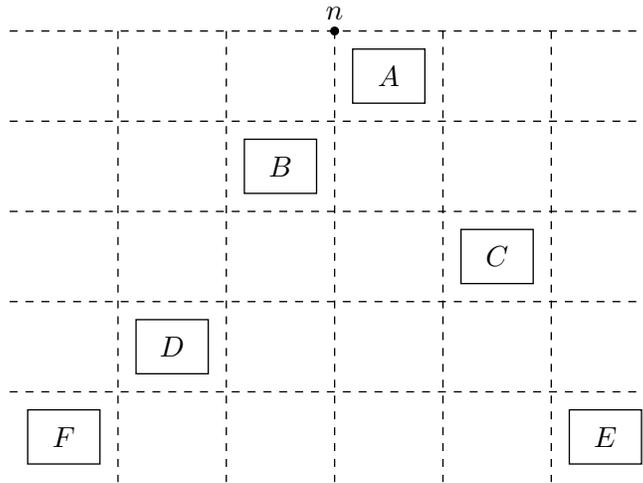
\begin{figure}[htbp]
\centering
 \begin{tikzpicture}[line width=0.5pt,scale=0.24]
		\coordinate (O) at (0,0);
		\path (15,1)  node {$n$};
		\draw [dashed] (-3,0)--++(36,0);
		\draw [dashed] (-3,-5)--++(36,0);
        \draw [dashed] (-3,-10)--++(36,0);
        \draw [dashed] (-3,-15)--++(36,0);
        \draw [dashed] (-3,-20)--++(36,0);
		\fill[black!100] (O)++(15,0) circle(1.5ex);

		\draw (16,-1) rectangle (20,-4);
		\path (18,-2.5)  node {$A$};
		\draw (10,-6) rectangle (14,-9);
	      \path (12,-7.5)  node {$B$};\
        \draw (22,-11) rectangle (26,-14);
	    \path (24,-12.5)  node {$C$};
        \draw (-2,-21) rectangle (2,-24);
        \draw (4,-16) rectangle (8,-19);
	    \path (6,-17.5)  node {$D$};
        \path (0,-22.5)  node {$F$};
        \draw (28,-21) rectangle (32,-24);
	    \path (30,-22.5)  node {$E$};
        \draw [dashed] (3,0)--++(0,-25);
		\draw [dashed] (15,0)--++(0,-25);
		\draw [dashed] (9,0)--++(0,-25);
		\draw [dashed] (21,0)--++(0,-25);
        \draw [dashed] (27,0)--++(0,-25);
		\end{tikzpicture}
        \caption{A schematic representation of the permutation diagrams for permutations in Case 2.}
        \label{sepStructure_thm3_f_1}
        \end{figure}

\begin{itemize}
\item[(a)]
$A$ contains $i$ elements,  which implies that $E\cup C$ and hence $D\cup F$ are both empty. If  $B$ is empty, the g.f. is
\vspace{-0.4cm}
\begin{align}\label{eq:5.8}
 quvsx  W_i(p,q,1,v,s,t)x^i,
\end{align} 
where recall that $W_n$ is the coefficient of $x^n$ in  $\widetilde G_{\ell-1}$. This is because
$A$ does not contribute to  the statistic lmax in $\pi$ and can be any separable permutation of length $i$ avoiding  $\widetilde P_{\ell-1}$,
while  $n$  contributes an extra descent, one extra left-to-right maximum, one extra right-to-left maximum, and one extra left-to-right minimum.

If  $B$ is non-empty, then $B$ can be any separable permutation simultaneously avoiding
$P_j$ and $\widetilde P_{\ell}$. $B$ does not contribute to rmax in $\pi$ and hence has the g.f.\ $F_{j,\ell}(x,p,q,u,1,s,t) - 1$.
Note that the subpermutation  $nA$ does not contribute to the statistic lmin in $\pi$
and an extra ascent is formed between $nA$ and $B$.
Hence, from \eqref{eq:5.8}, in this subcase we have 
\begin{align}\label{eq:5.9}
 p quvx W_i(p,q,1,v,1,t)x^i  \left(F_{j,\ell}(x,p,q,u,1,s,t) - 1\right).
\end{align}

\item[(b)]
$A$ contains $h$ elements,  where $1\leq h \leq i-1$ and $i \geq 2$. This implies that $B$ and $C$ are both  non-empty.
If $D$ is empty,  then $E$ and $F$ are empty.
Note that $A$ and $B$ contain  at most $\ell-3$ elements, or otherwise, $n$, elements in $A$ and $B$, and an element in $C$ will form an occurrence of $\widetilde{P}_{\ell}$,  which is a contradiction. Therefore, $B$ contains  at most $\ell-3-h$ elements. Since   $B$ is  non-empty, $h\leq\ell-4$. $A$ does not contribute to the statistic lmax, rmin, and lmin in $\pi$ and it can be any non-empty separable permutation of length~$h$. The  g.f. is then $\sum\limits_{h=1}^{\min\{i-1, \ell-4\}}X_h(p,q,1,v,1,1)x^h$.

Assume that $B$ contains $m$ elements, $1\leq m\leq \ell-3-h$, and recall that $Y_n$ (resp., $Z_n$) is the coefficient of $x^n$ in $\widetilde G_{\ell-m-h-1}$ (resp.,  $G_{j-(i-h)}$). Since  $B$ does not contribute to the statistics rmax and rmin in $\pi$, and it can be
any non-empty separable permutation of length~$m$ avoiding  $ P_{j-(i-h)}$, the g.f.\
is$\sum\limits_{m=1}^{\ell-h-3}Z_m(p, q, u, 1, s, 1)x^m$.
Since  $C$ does not contribute to the statistics rmax  in $\pi$, and it can be
any non-empty separable permutation of length~$i-h$ avoiding  $\widetilde P_{\ell-m-h-1}$, the g.f.\
is $Y_{i-h}(p,q,1,v,s,t)x^{i-h}$.
Moreover,
an extra descent is formed between  $A$
and $C$, while  $n$  contributes an extra ascent, an extra descent, one extra left-to-right maximum, and one extra right-to-left maximum. Hence, in this subcase we have the term
\begin{small}
\begin{align}\label{eq:5.10}
q \sum_{h=1}^{\min\{i-1, \ell-4\}}
  pquvx X_h(p,q,1,v,1,1)x^h  \sum\limits_{m=1}^{\ell-h-3}\left( Z_m(p, q, u, 1, s, 1)x^m
Y_{i-h}(p,q,1,v,s,t)x^{i-h} \right).
\end{align}
\end{small}

If $D$ is non-empty,
then $B$ and $C$ are both non-empty. For any $3 \leq j,\ell \leq 5$, $E$ must be empty, otherwise it will result in an occurrence of $\widetilde P_{\ell}$, so $F$ is empty. Then, the g.f.\ is
\begin{align}\label{eq:5.11}
&pq \sum_{h=1}^{\min\{i-1, \ell-4\}} pquvx X_h(p,q,1,v,1,1)x^h  
\sum\limits_{m=1}^{\ell-h-3} \bigl( Z_m(p, q, u, 1, 1, 1)x^m
 \nonumber \\
&\quad\cdot Y_{i-h}(p,q,1,v,1,t)x^{i-h} \bigr)
\bigl(F_{j,\ell}(x,p,q,u,1,s,t)-1\bigr)
\end{align}
\vspace{-0.3cm}
since\\[-5mm]
\begin{itemize}
\item[$\bullet$] $D$
is a non-empty separable
permutation avoiding $P_j$ and $\widetilde P_{\ell}$ and
not contributing to the statistic rmax in $\pi$.
Thus, the respective g.f.\ is
$F_{j,\ell}(x,p,q,u,1,s,t)-1$;
\item[$\bullet$] an extra ascent is formed between $B$ and $D$, and
\item[$\bullet$] because the subpermutation
$BnAC$  does not contribute to the statistic
lmin  in $\pi$, combining with \eqref{eq:5.10}, the respective g.f.\ is
\begin{align*}
&pq \sum_{h=1}^{\min\{i-1, \ell-4\}} pquvx X_h(p,q,1,v,1,1)x^h  
\sum\limits_{m=1}^{\ell-h-3} \bigl( Z_m(p, q, u, 1, 1, 1)x^m
 \nonumber \\
&\quad\cdot Y_{i-h}(p,q,1,v,1,t)x^{i-h} \bigr)
\bigl(F_{j,\ell}(x,p,q,u,1,s,t)-1\bigr).
\end{align*}

\end{itemize}
\end{itemize}

\noindent
Combining with \eqref{eq:5.8}--\eqref{eq:5.11}, we get (\ref{eq:5.3}).

If $i = 0$, let $\pi=\pi_1\cdots\pi_n$ be any non-empty separable permutation simultaneously avoiding $P_j$ and $\widetilde P_\ell$  with  $\pi_n=n$. Note that $n-1$, if  it exists, is to the left of $n$. We have $g_{j,\ell,0} = 0$ and
\begin{align}\label{eq:5.12}
f_{j,\ell,0} =uvstx+puvtx\left(F_{j,\ell}(x, p, q, u, 1, s, t)-1\right)
\end{align}
since if $n\geq 2$, the subpermutation $\pi_1 \pi_2\cdots\pi_{n-1}$  does not contribute to the statistic rmax and can be any permutation in $S_{n-1}(2413, 3142, P_j,\widetilde P_\ell)$, while $n$ contributes an ascent, an extra left-to-right maximum, an extra
right-to-left maximum, and an extra right-to-left minimum in~$\pi$.

Fixing integers  $3 \leq j,\ell \leq 5$ and  setting $v=1$ in \eqref{eq:5.1}--\eqref{eq:5.3}, we obtain, for $1\leq i \leq j-2$,
\vspace{-0.2cm}
\begin{align}
F_{j,\ell}(x,p,q,u,1,s,t) &= 1 + \sum\limits_{i=0}^{j-2} \Big(f_{j,\ell,i}(x,p,q,u,1,s,t) + g_{j,\ell,i}(x,p,q,u,1,s,t)\Big), 
\label{eq:5.13}\\[6pt]
f_{j,\ell,i}(x,p,q,u,1,s,t) &= pqux\Bigg(\sum\limits_{a=1}^{\ell-3}U_a(p,q,u,1,s,1)x^aV_i(p,q,1,1,s,t)x^i\Bigg) \nonumber \\
&\hspace{-3cm}\quad + p^2qux\Bigg(\sum\limits_{a=1}^{\ell-3}U_a(p,q,u,1,1,1)x^aV_i(p,q,1,1,1,t)x^i\Bigg)\big(F_{j,\ell}(x,p,q,u,1,s,t)-1\big), \label{eq:5.14}\\
g_{j,\ell,i}(x,p,q,u,1,s,t) &= qusxW_i(p,q,1,1,s,t)x^i 
            \nonumber \\
&\hspace{-3cm}\quad+ pquxW_i(p,q,1,1,1,t)x^i\big(F_{j,\ell}(x,p,q,u,1,s,t)-1\big) \nonumber \\
            & \hspace{-3cm} \quad + \sum\limits_{h=1}^{\min\{i-1, \ell-4\}}pq^2uxX_h(p,q,1,1,1,1)x^h
               \sum\limits_{m=1}^{\ell-h-3}\big(Z_m(p,q,u,1,s,1)x^mY_{i-h}(p,q,1,1,s,t)x^{i-h}\big)\nonumber \\
           &\hspace{-3cm} \quad + \sum\limits_{h=1}^{\min\{i-1, \ell-4\}}p^2q^2uxX_h(p,q,1,1,1,1)x^h
               \sum\limits_{m=1}^{\ell-h-3}\big(Z_m(p,q,u,1,1,1)x^mY_{i-h}(p,q,1,1,1,t)x^{i-h}\big)\nonumber\\
           &\hspace{-3cm} \quad \cdot \big(F_{j,\ell}(x,p,q,u,1,s,t)-1\big),
\label{eq:5.15}
\end{align}
with the initial conditions
\begin{align}\label{eq:5.16}
\begin{cases}
f_{j,\ell,0}(x, p, q, u, 1, s, t)& = ustx+putx\left(F_{j,\ell}(x, p, q, u, 1, s, t)-1\right),\\
g_{j,\ell,0}(x, p,q, u, 1,s,t)&= 0,\\
f_{2,2,0}(x, p, q, u, 1, s, t) &= \frac{ustx}{1 - putx}.
\end{cases}
\end{align}

It is easy to see that the functions 
$f_{j,\ell,i}(x, p, q, u, 1, s, t)$ and
$g_{j,\ell,i}(x, p, q, u, 1, s, t)$
can be computed step by step, for $i$ from 0 to $j-2$, in terms of an already computer factor times $F_{j,\ell}(x, p, q,  u, 1, s, t)$ and  adding additional already computerd terms. Substituting these expressions for  $f_{j,\ell,i}(x, p, q, u, 1, s, t)$ and
$g_{j,\ell,i}(x, p, q, u, 1, s, t)$ in
\eqref{eq:5.13},  we can obtain an expression for $F_{j,\ell}(x, p, q, u, 1, s, t).$
Then  we substitute $F_{j,\ell}(x, p, q, u, 1, s, t)$ into \eqref{eq:5.1}--\eqref{eq:5.3} yielding the expression for $F_{j,\ell}$ and completing the proof.
\end{proof}

\begin{rem}
Since
\begin{align*}
\widetilde G_2 &= 1 + uvstx + pu^2vst^2x^2 + \dots,\\
\widetilde G_3 &= 1 + uvstx + (pu^2vst^2 + quv^2s^2t)x^2 \\
&\quad + (pquv^2s^2t^2 + pqu^2vs^2t^2 + pqu^2v^2st^2 + p^2u^3vst^3)x^3 + \dots,\\
\widetilde G_4 &= 1 + uvstx + (pu^2vst^2 + quv^2s^2t)x^2 \\
&\quad +(p^2u^3vst^3+pqu^2v^2st^2+pqu^2vs^2t^2+pqu^2v^2s^2t+pquv^2s^2t^2+q^2uv^3s^3t)x^3+\dots,\\
F_2 &= 1 + uvstx + pu^2vst^2x^2 + \dots, \\
F_3 &= 1 + uvstx + (pu^2vst^2 + quv^2s^2t)x^2 + \dots,\\
F_4 &= 1 + uvstx + (pu^2vst^2 + quv^2s^2t)x^2 + \dots,
\end{align*}
we can obtain $U_n$, $V_n$, $W_n$, $Y_n$ and $Z_n$ when deriving the g.f.\ $F_{j,\ell}$ for any $3 \leq j,\ell \leq 5$.
\end{rem}

\section{Explicit forms of $F_{j,\ell}(x,p,q,u,v,s,t)$ for $3 \leq j,\ell \leq 5$}

In this section, we use Theorem~\ref{thm jll} to derive explicitly the g.f.\ $F_{j,\ell}$ for all $3\le j,\ell\le 5$.

\subsection{The g.f.\ of $S_n(2413,3142,P_3,\widetilde{P}_{3})$ }
\begin{thm}\label{33-thm} The g.f.\ of $S_n(2413,3142,P_3,\widetilde{P}_{3})$ is
\begin{align*}
F_{3,3}(x,p,q,u,v,s,t) = \frac{F_a}{F_b},
\end{align*}
where $F_b = p q u t  x^{2} + p u t x - 1$ and 
\begin{align*}
F_a &= \begin{aligned}[t] &p q \,u^{2} v^{2} s^{2} t^{2} x^{3} - p q \,u^{2} v^{2} s \,t^{2} x^{3} - p q \,u^{2} v \,s^{2} t^{2} x^{3} + p q \,u^{2} v s \,t^{2} x^{3} \\
&+ p q u t \,x^{2} + p u t x - q u \,v^{2} s^{2} t \,x^{2} - u v s t x - 1.
\end{aligned}
\end{align*}
\end{thm}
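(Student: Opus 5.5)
By Proposition~\ref{Proposition1}, $S_n(2413,3142,P_3,\widetilde{P}_3)=S_n(P_3,\widetilde{P}_3)$, and as noted before Theorem~\ref{thmt3}, this is the set of permutations avoiding $231$, $312$ and $321$. Rather than specializing the system of Theorem~\ref{thm jll} to $j=\ell=3$ (which would also work, with only $i\in\{0,1\}$), I would use the explicit structure given by the proof of Theorem~\ref{thmt3} with $j=3$. There, either $\pi_1=1$ or $\pi_1\pi_2=21$, and the rest is again such a permutation. So every nonempty $\pi$ is uniquely a direct sum $\pi=B_1\oplus B_2\oplus\cdots\oplus B_k$ with each block $B_r\in\{1,21\}$ and $k\ge 1$.

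Next I would read off the six statistics blockwise.
\begin{itemize}
\item Ascents: there are exactly $k-1$, one between consecutive blocks, so a factor $p$ is inserted between blocks.
\item Descents: each block $21$ contributes one $q$.
\item $\lmax$ and $\rmin$: every block contributes exactly one left-to-right maximum (its first entry) and exactly one right-to-left minimum (its last entry), giving $ut$ per block.
\item $\rmax$ (boundary effect): right-to-left maxima occur only in the last block, giving $v$ if it is $1$ and $v^2$ if it is $21$.
\item $\lmin$ (boundary effect): left-to-right minima occur only in the first block, giving $s$ if it is $1$ and $s^2$ if it is $21$.
\end{itemize}
Thus a middle block has weight $a=utx$ or $b=qutx^2$.

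The generating function then splits by $k$. For $k=1$ the single block is both first and last, contributing $uvstx+quv^2s^2tx^2$. For $k\ge 2$ the weights factor as (first block) $\cdot$ (middle blocks) $\cdot$ (last block). The first block has weight $sutx+s^2qutx^2$ and the last block has weight $vutx+v^2qutx^2$. The $k-2$ middle blocks each carry a preceding ascent, giving the geometric factor $1/(1-p(a+b))$. One more factor $p$ is needed for the ascent before the last block. Hence
\[
F_{3,3}=1+uvstx+quv^2s^2tx^2+\frac{p\,(sutx+s^2qutx^2)(vutx+v^2qutx^2)}{1-putx-pqutx^2}.
\]
The denominator $1-putx-pqutx^2$ equals $-F_b$, as required.

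The final step is to put everything over the common denominator $F_b$ and check that the numerator expands to $F_a$. I expect this bookkeeping to be the only real obstacle. The main things to get right are the $k=1$ boundary term, where $s$ and $v$ are attached to the same block, and the sign convention, since both $F_a$ and $F_b$ are written with leading $-1$. As a sanity check, setting all variables except $x$ to $1$ should give $1/(1-x-x^2)-1$ shifted appropriately, i.e.\ Fibonacci numbers, in agreement with Theorem~\ref{thmt3}.
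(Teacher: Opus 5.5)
Your proposal is correct, and it takes a different route from the paper. The algebra you defer does close. With $D=1-putx-pqutx^2=-F_b$ one checks
\[
-F_a = D\,(1+uvstx+quv^2s^2tx^2)+pu^2t^2x^2(s+qs^2x)(v+qv^2x),
\]
where the terms $\pm pu^2vst^2x^2$ and $\pm pq^2u^2v^2s^2t^2x^4$ cancel.

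The paper instead specializes the general system of Theorem~\ref{thm jll}. For $j=\ell=3$ only $f_{3,3,0}$ (the permutation ends in $n$) and $g_{3,3,1}$ (it ends in $n(n-1)$) survive. This gives the functional equation
\[
F_{3,3}=1+uvstx+quv^2s^2tx^2+(puvtx+pquv^2tx^2)\bigl(F_{3,3}(x,p,q,u,1,s,t)-1\bigr),
\]
which the paper solves first at $v=1$ and then substitutes back. This is the same layered structure, read one block at a time from the right. Peeling off the last block ($1$ or $21$) kills the rmax contribution of the prefix, which is why the paper needs the $v=1$ specialization and the solve-then-substitute step.

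You unroll the whole sequence of blocks at once and treat both boundary effects (lmin on the first block, rmax on the last) symmetrically. So you need no auxiliary specialization, and your closed form agrees exactly with what the paper's equation produces before expansion.

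Your argument is more transparent and self-contained for $j=\ell=3$. It also makes the Fibonacci count visible directly from the block weights $utx$ and $qutx^2$. The paper's approach has the advantage of being one instance of a uniform scheme intended to cover all $3\le j,\ell\le 5$.

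One small slip: in your sanity check, setting all variables except $x$ to $1$ should give exactly $1/(1-x-x^2)$ for $F_{3,3}$, not ``$1/(1-x-x^2)-1$ shifted appropriately.''
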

\begin{proof}
Let $j=\ell=3$ in Theorem \ref{thm jll}.  Since $\widetilde G_2=1+uvstx+\dots$, we have  $W_1=uvst$ and
\begin{align}\label{331}
F_{3,3} &= 1  + f_{3,3,0} + g_{3,3,0}+  f_{3,3,1} + g_{3,3,1}
\end{align}
where
\begin{align}
f_{3,3,0} & = uvstx + puvtx\left(F_{3,3}(x, p, q, u, 1, s, t) - 1\right), \label{332}\\
g_{3,3,0} & = f_{3,3,1} = 0,\label{333} \\
g_{3,3,1} &= quvsx W_1(p,q,1,v,s,t)x\nonumber \\
&\quad + pquvxW_1(p,q,1,v,1,t)x\left( F_{3,3}(x,p,q,u,1,s,t) - 1\right)\nonumber \\
&= quv^2s^2tx^2 + pquv^2tx^2\left( F_{3,3}(x,p,q,u,1,s,t) - 1\right).\label{335}
\end{align}
Substituting \eqref{332}--\eqref{335} into \eqref{331} yields
\begin{align}
F_{3,3} &= 1 + uvstx + quv^2s^2tx^2 + \bigl(puvtx + pquv^2tx^2\bigr)\left( F_{3,3}(x,p,q,u,1,s,t) - 1\right).\label{336}
\end{align}
Letting $v$=1 in \eqref{336}, and solving for  $F_{3,3}(x,p,q,u,1,s,t)$, we have
\begin{align}\label{338}
   F_{3,3}(x,p,q,u,1,s,t)={\frac{p q u t x^{2}+p u t x -q u \,s^{2} t \,x^{2}-u s t x -1}{p q u t x^{2}+p u t x -1}}.
\end{align}
Substituting \eqref{338} into the right-hand side of \eqref{336} yields the desired result.
\end{proof}

\begin{cor}
The number of permutations in $S_{n}(2413,3142,P_3,\widetilde{P}_{3})$ (equivalently, by Proposition~\ref{Proposition1}, in $S_{n}(P_3,\widetilde{P}_{3})$) is given by the g.f.
\begin{align*}
F_{3,3}(x) &= \frac{1}{1 - x - x^2} 
= 1 + x + 2x^2 + 3x^3 + 5x^4 + 8x^5 + 13x^6 + 21x^7 + O(x^8).
\end{align*}
\end{cor}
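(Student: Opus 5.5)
The plan is to specialize Theorem~\ref{33-thm} by setting $p=q=u=v=s=t=1$, and to cross-check the answer against Theorem~\ref{thmt3}. With all statistics set to $1$, the numerator and denominator simplify separately.

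\textbf{Denominator.} We have $F_b = x^2+x-1$.

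\textbf{Numerator.} The four cubic monomials in $F_a$ carry signs $+,-,-,+$ and all become $x^3$, so they cancel. The remaining terms give
\[
F_a = x^2 + x - x^2 - x - 1 = -1.
\]

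\textbf{Conclusion.} Hence $F_{3,3}(x)=\frac{-1}{x^2+x-1}=\frac{1}{1-x-x^2}$.

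The only point needing care is that the specialization is legitimate. This holds because $F_{3,3}(x,p,q,u,v,s,t)$ is a formal power series in $x$ whose coefficients are polynomials in the other variables. Moreover, $F_b$ has constant term $-1$, which is invertible, so setting the auxiliary variables to $1$ commutes with the division.

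The expansion $1+x+2x^2+3x^3+5x^4+\cdots$ then follows from the recurrence $c_n=c_{n-1}+c_{n-2}$ with $c_0=c_1=1$, which gives $c_n=F_{n+1}$.

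\textbf{Equivalence with $S_n(P_3,\widetilde P_3)$.} The parenthetical claim follows from Proposition~\ref{Proposition1} with $j=3$. As an independent check, Theorem~\ref{thmt3} with $j=3$ gives $|S_n(P_3,\widetilde P_3)|=F^{(2)}_{n+1}=F_{n+1}$, which agrees with the result above.

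There is no real obstacle here. The only step where an error could slip in is the bookkeeping of signs in the cancellation of the cubic terms.
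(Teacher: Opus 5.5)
Your proposal is correct and follows the paper's own proof, which likewise sets $p=q=u=v=s=t=1$ in Theorem~\ref{33-thm}. Your simplifications $F_b\mapsto x^2+x-1$ and $F_a\mapsto -1$ are right, and the cross-check via Theorem~\ref{thmt3} with $j=3$ is a valid independent confirmation but not needed.
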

\begin{proof}
Setting $p=q=u=v=s=t=1$ in Theorem~\ref{33-thm}, we get the desired result.
\end{proof}

\subsection{The g.f.\ of $S_n(2413,3142,P_3,\widetilde{P}_{4})$ }

\begin{thm}\label{34-thm}
The g.f.\ of $S_n(2413,3142,P_3,\widetilde{P}_{4})$ is
\begin{align*}
F_{3,4}(x,p,q,u,v,s,t)= \frac{F_c}{F_d},
\end{align*}
where $F_d = p^{2} q u^{2} t x^{3} + p q u t x^{2} + p u t x - 1$ and 
\begin{align*}
F_c &= \begin{aligned}[t]
&p^{2} q u^{3} v^{2} s^{2} t^{2} x^{4} - p^{2} q u^{3} v s^{2} t^{2} x^{4} - p^{2} q u^{3} v^{2} s t^{2} x^{4} + p^{2} q u^{3} v s t^{2} x^{4} + p q u^{2} v^{2} s^{2} t^{2} x^{3} - p q u^{2} v s^{2} t^{2} x^{3}\\
& \hspace{-1cm} - p q u^{2} v^{2} s^{2} t x^{3} - p q u^{2} v^{2} s t^{2} x^{3} + p q u^{2} v s t^{2} x^{3} + p^{2} q u^{2} t x^{3} - q u v^{2} s^{2} t x^{2} + p q u t x^{2} - u v s t x + p u t x - 1.
\end{aligned}
\end{align*}
\end{thm}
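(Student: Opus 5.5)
The plan is to specialize the functional system of Theorem~\ref{thm jll} to $j=3$, $\ell=4$, exactly as in the proof of Theorem~\ref{33-thm}. Here $i$ ranges over $\{0,1\}$, so
\[
F_{3,4}=1+f_{3,4,0}+g_{3,4,0}+f_{3,4,1}+g_{3,4,1}.
\]
The initial conditions \eqref{eq:5.4} give $g_{3,4,0}=0$ and
\[
f_{3,4,0}=uvstx+puvtx\bigl(F_{3,4}(x,p,q,u,1,s,t)-1\bigr).
\]
For the remaining two functions, I read off the needed low-order coefficients from the Remark.

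For $f_{3,4,1}$ I use \eqref{eq:5.2}. The sum over $a$ has the single term $a=1$, since $\ell-3=1$. The factor $U_1$ is the $x$-coefficient of $G_{j-i}=G_2$, namely $uvst$. The factor $V_1$ is the $x$-coefficient of $\widetilde G_{\ell-a-1}=\widetilde G_2$, again $uvst$. After the prescribed specializations of variables,
\[
f_{3,4,1}=pqu^2v^2s^2t\,x^3+p^2qu^2v^2t\,x^3\bigl(F_{3,4}(x,p,q,u,1,s,t)-1\bigr).
\]

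For $g_{3,4,1}$ I use \eqref{eq:5.3}. The $h$-sums are empty because $\min\{i-1,\ell-4\}=0$. The factor $W_1$ is the $x$-coefficient of $\widetilde G_3$, which is $uvst$. This gives
\[
g_{3,4,1}=quv^2s^2t\,x^2+pquv^2t\,x^2\bigl(F_{3,4}(x,p,q,u,1,s,t)-1\bigr).
\]

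Collecting everything, with $F':=F_{3,4}(x,p,q,u,1,s,t)$, I get the linear equation
\[
F_{3,4}=1+uvstx+quv^2s^2tx^2+pqu^2v^2s^2tx^3+\bigl(puvtx+pquv^2tx^2+p^2qu^2v^2tx^3\bigr)(F'-1).
\]
Setting $v=1$ and solving for $F'$ gives
\[
F'=\frac{1+ustx+qus^2tx^2+pqu^2s^2tx^3-putx-pqutx^2-p^2qu^2tx^3}{1-putx-pqutx^2-p^2qu^2tx^3}.
\]
Its denominator is $-F_d$. Substituting $F'$ back into the right-hand side and putting everything over the common denominator $F_d$ should produce $F_c$.

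The main obstacle is this final rational simplification, which should be checked mechanically (e.g.\ with computer algebra), together with correctly specializing the variables in $U_1,V_1,W_1$ according to \eqref{eq:5.2}--\eqref{eq:5.3}. As sanity checks, I will compare with the $j=\ell=3$ computation and verify that setting all variables to $1$ gives coefficients $1,1,2,4,7,\dots$, which are $F^{(3)}_{n+1}$ by Theorem~\ref{thmt3}.
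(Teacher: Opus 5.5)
Your proposal is correct and is essentially the paper's own proof: you specialize Theorem~\ref{thm jll} to $j=3$, $\ell=4$, use $U_1=V_1=W_1=uvst$, solve the $v=1$ equation for $F_{3,4}(x,p,q,u,1,s,t)$, and substitute back. You correctly take $W_1$ from $\widetilde G_3$ (the paper cites $\widetilde G_2$, which has the same $x$-coefficient), and your expression for $F_{3,4}(x,p,q,u,1,s,t)$ is the right one: it equals $F_c|_{v=1}/F_d$, whereas the paper's display \eqref{348} has misprinted exponents of $u$ and $s$, and substituting your expression back yields exactly $F_c/F_d$.
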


\begin{proof}
Let $j=3$ and $\ell=4$ in Theorem \ref{thm jll}.  Since $G_2=1+uvstx+\dots$, we have  $U_1=uvst$. Since
 $\widetilde G_2=1+uvstx+\dots$, we have  $W_1=uvst$. Therefore,
\begin{align}\label{341}
F_{3,4} &= 1  + f_{3,4,0} + g_{3,4,0} +  f_{3,4,1} + g_{3,4,1}
\end{align}
where
\begin{align}
f_{3,4,0} & = uvstx + puvtx\left(F_{3,4}(x, p, q, u, 1, s, t) - 1\right), \label{342}\\
g_{3,4,0} & = 0,\label{343} \\
f_{3,4,1} &\;= pquvx \left(U_1(p,q,u,1,s,1)xV_1(p,q,1,v,s,t)x\right)\nonumber \\
&\quad + p^2quvx\left(U_1(p,q,u,1,1,1)xV_1(p,q,1,v,1,t)x\right) \left( F_{3,4}(x,p,q,u,1,s,t) - 1\right)\nonumber \\
&= pqu^2v^2s^2tx^3 + p^2qu^2v^2tx^3\left( F_{3,4}(x,p,q,u,1,s,t) - 1\right),\label{344}\\
g_{3,4,1}
&= quvsx W_1(p,q,1,v,s,t)x\nonumber \\
&\quad + pquvxW_1(p,q,1,v,1,t)x\left( F_{3,4}(x,p,q,u,1,s,t) - 1\right)\nonumber \\
&= quv^2s^2tx^2 + pquv^2tx^2\left( F_{3,4}(x,p,q,u,1,s,t) - 1\right),\label{345}
\end{align}
Substituting \eqref{342}--\eqref{345} into \eqref{341} yields
\begin{align}
F_{3,4} &= 1 + uvstx + pqu^2v^2s^2tx^3 + quv^2s^2tx^2 \nonumber\\
&\quad + \bigl(puvtx + p^2qu^2v^2tx^3+ pquv^2tx^2\bigr)\left( F_{3,4}(x,p,q,u,1,s,t) - 1\right).\label{346}
\end{align}
Letting $v$=1 in \eqref{346}, and solving for $F_{3,4}(x,p,q,u,1,s,t)$, we have
\begin{align}\label{348}
   F_{3,4}(x,p,q,u,1,s,t)&=
 \frac{p^2 q u t x^3 - p q u s t x^3 + p q u t x^2 + p u t x - q u s t x^2 - u s t x - 1}{p^2 q u t x^3 + p q u t x^2 + p u t x - 1}.
\end{align}
Substituting \eqref{348} into the right-hand side of \eqref{346} yields the desired result.
\end{proof}
\begin{cor}
The number of permutations in $S_{n}(2413,3142,P_3,\widetilde{P}_{4})$ (equivalently, by Proposition~\ref{Proposition1}, in $S_{n}(P_3,\widetilde{P}_{4})$)  is given by the g.f. 
\begin{align*}
F_{3,4}(x) &= \frac{1}{1-x-x^2-x^3} =1 + x + 2x^2 + 4x^3 + 7x^4 + 13x^5 + 24x^6 + 44x^7 + O(x^8).
\end{align*}
\end{cor}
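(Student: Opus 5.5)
The plan is to specialize Theorem~\ref{34-thm} by setting $p=q=u=v=s=t=1$, exactly as was done for the corollary following Theorem~\ref{33-thm}.

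First, the denominator $F_d=p^2qu^2tx^3+pqutx^2+putx-1$ becomes $x^3+x^2+x-1$. Second, for the numerator $F_c$ I will group its monomials by power of $x$ and evaluate.
\begin{itemize}
\item The four $x^4$ terms come with signs $+,-,-,+$, so they sum to $0$.
\item The $x^3$ terms give $1-1-1-1+1+1=0$.
\item The $x^2$ terms give $-1+1=0$.
\item The $x$ terms give $-1+1=0$.
\item The constant term is $-1$.
\end{itemize}
Hence $F_c$ specializes to $-1$, and
\[
F_{3,4}(x)=\frac{-1}{x^3+x^2+x-1}=\frac{1}{1-x-x^2-x^3}.
\]

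For the series expansion I will use the recurrence $a_n=a_{n-1}+a_{n-2}+a_{n-3}$ with $a_0=1$ and $a_n=0$ for $n<0$. This gives the tribonacci values $1,1,2,4,7,13,24,44$. As a consistency check, this matches Theorem~\ref{thmt3} with $j=4$, which gives $F^{(3)}_{n+1}$ by the $(\ell,j)$ symmetry. The equality with $S_n(P_3,\widetilde P_4)$ is Proposition~\ref{Proposition1}.

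The only real obstacle is bookkeeping: correctly tallying the signs of the $x^3$ monomials in $F_c$. I would double-check that tally against the independent count from Theorem~\ref{thmt3}.
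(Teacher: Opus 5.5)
Your proposal is correct and follows the paper's argument exactly: specialize Theorem~\ref{34-thm} at $p=q=u=v=s=t=1$, and your tallies are right, with $F_c\mapsto -1$ and $F_d\mapsto x^3+x^2+x-1$. The tribonacci expansion and the cross-check against Theorem~\ref{thmt3} are harmless extras that the paper leaves implicit.
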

\begin{proof}
Setting $p=q=u=v=s=t=1$  in Theorem~\ref{34-thm}, we get the desired result.
\end{proof}
\subsection{The g.f.\ of $S_n(2413,3142,P_3,\widetilde{P}_{5})$ }
\begin{thm}\label{35-thm}
The g.f.\ of $S_n(2413,3142,P_3,\widetilde{P}_{5})$ is
\[
F_{3,5}(x,p,q,u,v,s,t)=\frac{F_e}{F_f},
\]
where $F_f = p^{3} q u^{3} t x^{4} + p^{2} q u^{2} t x^{3} + p q u t x^{2} + p u t x - 1$ and
\[
\begin{aligned}
F_e &= \; p^{3} q u^{4} v^{2} s^{2} t^{2} x^{5} - p^{3} q u^{4} v^{2} s t^{2} x^{5} - p^{3} q u^{4} v s^{2} t^{2} x^{5} + p^{3} q u^{4} v s t^{2} x^{5} + p^{3} q u^{3} t x^{4}\\
    &\quad  + p^{2} q u^{3} v^{2} s^{2} t^{2} x^{4} - p^{2} q u^{3} v^{2} s^{2} t x^{4} - p^{2} q u^{3} v^{2} s t^{2} x^{4} - p^{2} q u^{3} v s^{2} t^{2} x^{4} + p^{2} q u^{3} v s t^{2} x^{4} \\
    &\quad + p^{2} q u^{2} t x^{3} + p q u^{2} v^{2} s^{2} t^{2} x^{3} - p q u^{2} v^{2} s^{2} t x^{3} - p q u^{2} v^{2} s t^{2} x^{3} - p q u^{2} v s^{2} t^{2} x^{3}\\
    &\quad  + p q u^{2} v s t^{2} x^{3} + p q u t x^{2} + p u t x - q u v^{2} s^{2} t x^{2} - u v s t x - 1.
\end{aligned}
\]
\end{thm}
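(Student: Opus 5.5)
We specialize Theorem~\ref{thm jll} to $j=3$, $\ell=5$, following the pattern of the proofs of Theorems~\ref{33-thm} and~\ref{34-thm}. Since $j=3$, the index $i$ ranges over $\{0,1\}$. This gives
\[
F_{3,5}=1+f_{3,5,0}+g_{3,5,0}+f_{3,5,1}+g_{3,5,1},
\]
with $g_{3,5,0}=0$ and $f_{3,5,0}=uvstx+puvtx\,(F_{3,5}(x,p,q,u,1,s,t)-1)$ from the initial conditions \eqref{eq:5.4}. The sums over $h$ in \eqref{eq:5.3} run over $1\le h\le \min\{i-1,\ell-4\}=0$, so they are empty. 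Hence $g_{3,5,1}$ reduces to its first two terms. These involve only $W_1$, the coefficient of $x$ in $\widetilde G_4$, which by the Remark equals $uvst$. As in the $j=\ell=3$ case, this gives
\[
g_{3,5,1}=quv^2s^2tx^2+pquv^2tx^2\,(F_{3,5}(x,p,q,u,1,s,t)-1).
\]

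The new ingredient is $f_{3,5,1}$, where $a$ now ranges over $1\le a\le \ell-3=2$. For $i=1$ we need $U_a$, the coefficients of $x^a$ in $G_{j-i}=G_2$. Only the identity avoids $P_2$, so $U_1=uvst$ and $U_2=pu^2vst^2$. We also need $V_1$, the coefficient of $x$ in $\widetilde G_{\ell-a-1}$, that is, in $\widetilde G_3$ for $a=1$ and $\widetilde G_2$ for $a=2$. Both equal $uvst$. After making the substitutions of variables prescribed in \eqref{eq:5.2}, we obtain
\[
f_{3,5,1}=\bigl(pqu^2v^2s^2tx^3+p^2qu^3v^2s^2t\,x^4\bigr)+\bigl(p^2qu^2v^2tx^3+p^3qu^3v^2tx^4\bigr)\bigl(F_{3,5}(x,p,q,u,1,s,t)-1\bigr).
\]

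Summing, $F_{3,5}$ takes the form
\[
F_{3,5}=A(v)+B(v)\,\bigl(F_{3,5}(x,p,q,u,1,s,t)-1\bigr),
\]
where $A(v)=1+uvstx+quv^2s^2tx^2+pqu^2v^2s^2tx^3+p^2qu^3v^2s^2tx^4$ and $B(v)=puvtx+pquv^2tx^2+p^2qu^2v^2tx^3+p^3qu^3v^2tx^4$. Setting $v=1$ gives a linear equation $F(1)=A(1)+B(1)(F(1)-1)$, so
\[
F_{3,5}(x,p,q,u,1,s,t)=\frac{A(1)-B(1)}{1-B(1)}.
\]
Note that $1-B(1)=-F_f$, which matches the claimed denominator. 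Substituting back gives
\[
F_{3,5}=\frac{A(v)(1-B(1))+B(v)(A(1)-1)}{1-B(1)}.
\]
Multiplying numerator and denominator by $-1$ and expanding should produce $F_e/F_f$.

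The main obstacle is bookkeeping rather than ideas. We must evaluate the correct specializations $U_a(p,q,u,1,s,1)$ versus $U_a(p,q,u,1,1,1)$, and $V_1(p,q,1,v,s,t)$ versus $V_1(p,q,1,v,1,t)$, for the new $a=2$ term. We must also check that the final expansion matches the $21$ monomials of $F_e$. A useful sanity check is that setting all variables to $1$ gives $1/(1-x-x^2-x^3-x^4)$, consistent with Theorem~\ref{thmt3} (the $4$-Fibonacci numbers $F^{(4)}_{n+1}$) together with Proposition~\ref{Proposition1}.
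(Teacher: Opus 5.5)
Your proposal is correct and follows the paper's proof step for step. You specialize Theorem~\ref{thm jll} to $j=3$, $\ell=5$ with $U_1=uvst$, $U_2=pu^2vst^2$ and $V_1=W_1=uvst$, obtain the same $f_{3,5,0}$, $f_{3,5,1}$ and $g_{3,5,1}$, solve the linear equation at $v=1$, and substitute back. Your closed form $\bigl(A(v)(1-B(1))+B(v)(A(1)-1)\bigr)/(1-B(1))$, multiplied through by $-1$, expands to exactly the $21$ monomials of $F_e$ over $F_f$.
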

\begin{proof}
Let $j=3,\ell=5$ in Theorem \ref{thm jll}. When $i=1$, $U_1=uvst$, $U_2=pu^2vst^2$, $V_1=uvst$, $W_1=uvst$, and
\begin{align}\label{eq:5.17d}
F_{3,5} &= 1  + f_{3,5,0} + g_{3,5,0} + f_{3,5,1} + g_{3,5,1}
\end{align}
where
\begin{align}
f_{3,5,0} & = uvstx + puvtx\left(F_{3,5}(x, p, q, u, 1, s, t) - 1\right), \label{eq:5.18d}\\
g_{3,5,0} & = 0,\label{5.19d}
\end{align}
\begin{align}
f_{3,5,1} &\;= pquvx \Bigl(U_1(p,q,u,1,s,1)xV_{1}(p,q,1,v,s,t)x  \nonumber\\
&\quad + U_2(p,q,u,1,s,1)x^2V_{1}(p,q,1,v,s,t)x \Bigr)\nonumber \\
&\quad + p^2quvx\Bigl(U_1(p,q,u,1,1,1)xV_{1}(p,q,1,v,1,t)x \nonumber \\
&\quad + U_2(p,q,u,1,1,1)x^2V_{1}(p,q,1,v,s,t)x\Bigr)\left(F_{3,5}(x,p,q,u,1,s,t) - 1\right) \nonumber \\
&\;= pqu^2v^2s^2tx^3 + p^2qu^3v^2s^2tx^4 \nonumber \\
&\quad + (p^2qu^2v^2tx^3+p^3qu^3v^2tx^4)\left(F_{3,5}(x,p,q,u,1,s,t) - 1\right),\label{5.20d}\\
g_{3,5,1} 
&= quvsx W_1(p,q,1,v,s,t)x\nonumber \\
&\quad + pquvxW_1(p,q,1,v,1,t)x\left( F_{3,5}(x,p,q,u,1,s,t) - 1\right)\nonumber \\
&= quv^2s^2tx^2 + pquv^2tx^2\left( F_{3,5}(x,p,q,u,1,s,t) - 1\right),\label{5.20e}
\end{align}
Substituting \eqref{eq:5.18d}--\eqref{5.20e} into \eqref{eq:5.17d} yields
\begin{align}
F_{3,5} &= 1 + uvstx + pqu^2v^2s^2tx^3 +p^2qu^3v^2s^2tx^4 + quv^2s^2tx^2 \nonumber\\
&\quad+\big( puvtx+p^2qu^2v^2tx^3+p^3qu^3v^2tx^4+ pquv^2tx^2\big)\left(F_{3,5}(x,p,q,u,1,s,t) - 1\right). \label{45}
\end{align}
Letting $v$=1 in \eqref{45}, and solving for $F_{3,5}(x,p,q,u,1,s,t)$, we have
\begin{equation}\label{eq:3.6}
F_{3,5}(x,p,q,u,1,s,t)= \frac{M_1}{M_2},
\end{equation}
where  $M_2 = p^3 q u^3 t x^4 + p^2 q u^2 t x^3 + p q u t x^2 + p u t x - 1$ and
\[
\begin{aligned}
M_1 = &\; p^3 q u^3 t x^4 - p^2 q u^3 s^2 t x^4 + p^2 q u^2 t x^3 - p q u^2 s^2 t x^3 + p q u t x^2 + p u t x - q u s^2 t x^2 - u s t x - 1.
\end{aligned}
\]
Substituting \eqref{eq:3.6} into the right-hand side of \eqref{45} yields the desired formula for $F_{3,5}$.
\end{proof}

\begin{cor}
The number of permutations in $S_{n}(2413,3142,P_3,\widetilde{P}_{5})$ (equivalently, by Proposition~\ref{Proposition1}, in $S_{n}(P_3,\widetilde{P}_{5})$) is given by the g.f.  
\begin{align*}
F_{3,5}(x) &= \frac{1}{1-x-x^2-x^3-x^4} =  1 + x + 2x^2 + 4x^3 + 8x^4 + 15x^5 + 29x^6 + 56x^7 + O(x^8). 
\end{align*}
\end{cor}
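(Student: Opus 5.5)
The plan is to specialize Theorem~\ref{35-thm} by setting $p=q=u=v=s=t=1$, exactly as in the two preceding corollaries. With all variables equal to $1$, the denominator becomes
\[
F_f(x,1,1,1,1,1,1)=x^4+x^3+x^2+x-1 = -(1-x-x^2-x^3-x^4).
\]

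Next I will evaluate the numerator $F_e$ at the same point, grouping the monomials by the power of $x$.
\begin{itemize}
\item The $x^5$ terms are $+1-1-1+1=0$.
\item The $x^4$ terms are $+1$ (from $p^3qu^3tx^4$) and then $+1-1-1-1+1=-1$, for a total of $0$.
\item The $x^3$ terms are $+1$ (from $p^2qu^2tx^3$) and then $+1-1-1-1+1=-1$, for a total of $0$.
\item The $x^2$ terms are $+1-1=0$.
\item The $x$ terms are $+1-1=0$.
\item The constant term is $-1$.
\end{itemize}
Hence $F_e$ specializes to $-1$, and $F_{3,5}(x)=1/(1-x-x^2-x^3-x^4)$. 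This count includes the empty permutation, which matches the convention $F_{3,5}=1+\cdots$.

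To get the series expansion, I will use the recurrence $a_n=a_{n-1}+a_{n-2}+a_{n-3}+a_{n-4}$ with $a_0=1$ and $a_r=0$ for $r<0$. This gives $1,1,2,4,8,15,29,56$. As a cross-check, these are the values $F^{(4)}_{n+1}$ predicted by Theorem~\ref{thmt3} with $j=5$, using $|S_n(P_3,\widetilde P_5)|=|S_n(P_5,\widetilde P_3)|$. The equality with $|S_n(2413,3142,P_3,\widetilde P_5)|$ is Proposition~\ref{Proposition1}.

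The only real obstacle is bookkeeping the signs in $F_e$ correctly.
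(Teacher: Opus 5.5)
Your proposal is correct and follows the paper's proof, which is exactly the specialization $p=q=u=v=s=t=1$ in Theorem~\ref{35-thm}. Your check that $F_e$ collapses to $-1$ and $F_f$ to $-(1-x-x^2-x^3-x^4)$ is accurate, and your cross-check via Theorem~\ref{thmt3} (with $j=5$, reverse--complement, and Proposition~\ref{Proposition1}) is also valid; in fact it gives an independent proof of the count that does not depend on the full seven-variable formula.
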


\begin{proof}
Setting $p=q=u=v=s=t=1$ in Theorem~\ref{35-thm}, we get the desired result.
\end{proof}

\subsection{The g.f.\ of $S_n(2413,3142,P_4,\widetilde{P}_{4})$ }
\begin{thm}\label{44-thm}
The g.f.\ of $S_n(2413,3142,P_4,\widetilde{P}_{4})$ is
\begin{align*}
F_{4,4}(x,p,q,u,v,s,t)= \frac{F_g}{F_h},
\end{align*}
where $F_h = p^{3} q u^{2} t^{2} x^{4} + p^{2} q u^{2} t x^{3} + p^{2} q u t^{2} x^{3} + p q^{2} u t x^{3} + p q u t x^{2} + p u t x - 1$ and

\begin{align*}
F_g &= \begin{aligned}[t]
      & p^{3} q^{3} u^{3} v^{3} s^{3} t^{3} x^{7} - p^{3} q^{3} u^{3} v^{3} s^{2} t^{3} x^{7} - p^{3} q^{3} u^{3} v^{2} s^{3} t^{3} x^{7} + p^{3} q^{3} u^{3} v^{2} s^{2} t^{3} x^{7} + p^{2} q^{3} u^{3} v^{3} s^{3} t^{2} x^{6}\\
      & + p^{2} q^{3} u^{2} v^{3} s^{3} t^{3} x^{6} - p^{2} q^{3} u^{3} v^{3} s^{2} t^{2} x^{6} - p^{2} q^{3} u^{3} v^{2} s^{3} t^{2} x^{6} - p^{2} q^{3} u^{2} v^{3} s^{2} t^{3} x^{6} - p^{2} q^{3} u^{2} v^{2} s^{3} t^{3} x^{6}\\
      & + p^{2} q^{3} u^{3} v^{2} s^{2} t^{2} x^{6} + p^{2} q^{3} u^{2} v^{2} s^{2} t^{3} x^{6} + p^{3} q u^{3} v^{2} s^{2} t^{3} x^{5} + p q^{3} u^{2} v^{3} s^{3} t^{2} x^{5} - p^{3} q u^{3} v^{2} s t^{3} x^{5} \\
      & - p^{3} q u^{3} v s^{2} t^{3} x^{5} - p q^{3} u^{2} v^{3} s^{2} t^{2} x^{5} - p q^{3} u^{2} v^{2} s^{3} t^{2} x^{5} + p^{3} q u^{3} v s t^{3} x^{5} + p q^{3} u^{2} v^{2} s^{2} t^{2} x^{5} \\
      &+ p q^{2} u^{2} v^{3} s^{3} t^{2} x^{4} + p^{2} q u^{3} v^{2} s^{2} t^{2} x^{4} + p^{2} q u^{2} v^{2} s^{2} t^{3} x^{4} - p^{2} q u^{3} v^{2} s t^{2} x^{4} - p^{2} q u^{3} v s^{2} t^{2} x^{4}\\
      & - p^{2} q u^{2} v^{2} s^{2} t^{2} x^{4} - p^{2} q u^{2} v^{2} s t^{3} x^{4} - p^{2} q u^{2} v s^{2} t^{3} x^{4} - p q^{2} u^{2} v^{3} s t^{2} x^{4} - p q^{2} u^{2} v s^{3} t^{2} x^{4} \\
      &+ p^{2} q u^{3} v s t^{2} x^{4} + p^{2} q u^{2} v s t^{3} x^{4}+ p q^{2} u^{2} v s t^{2} x^{4} + p q u^{2} v^{2} s^{2} t^{2} x^{3} - q^{2} u v^{3} s^{3} t x^{3} + p^{3} q u^{2} t^{2} x^{4} \\
      &- p q u^{2} v^{2} s^{2} t x^{3} - p q u^{2} v^{2} s t^{2} x^{3} - p q u^{2} v s^{2} t^{2} x^{3} - p q u v^{2} s^{2} t^{2} x^{3} + p q u^{2} v s t^{2} x^{3} + p^{2} q u^{2} t x^{3}\\
      & + p^{2} q u t^{2} x^{3} - q u v^{2} s^{2} t x^{2} + p q^{2} u t x^{3} + p q u t x^{2} - u v s t x + p u t x - 1
\end{aligned}
\end{align*}
\end{thm}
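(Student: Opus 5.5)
We specialize Theorem~\ref{thm jll} to $j=\ell=4$, exactly as in the proofs of Theorems~\ref{33-thm}--\ref{35-thm}. Here $i$ ranges over $0,1,2$, so
\[
F_{4,4}=1+f_{4,4,0}+f_{4,4,1}+f_{4,4,2}+g_{4,4,1}+g_{4,4,2},
\]
since $g_{4,4,0}=0$. The term $f_{4,4,0}$ is given by the initial conditions \eqref{eq:5.4}.

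\textbf{Step 1: the coefficients.} We read off the needed coefficients from the expansions in the Remark.
\begin{itemize}
\item Since $\ell-3=1$, only $a=1$ occurs in \eqref{eq:5.2}. Then $U_1$ is the $x^1$-coefficient of $G_{4-i}$, which is $uvst$. Also $V_i$ is the $x^i$-coefficient of $\widetilde G_{2}$, so $V_1=uvst$ and $V_2=pu^2vst^2$.
\item For \eqref{eq:5.3}, $W_i$ is the $x^i$-coefficient of $\widetilde G_3$, so $W_1=uvst$ and $W_2=pu^2vst^2+quv^2s^2t$.
\item The $h$-sums in \eqref{eq:5.3} run to $\min\{i-1,\ell-4\}=\min\{i-1,0\}$, so they are empty and all $X,Y,Z$ terms vanish.
\end{itemize}

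\textbf{Step 2: the linear equation.} Writing $\Phi:=F_{4,4}(x,p,q,u,1,s,t)-1$, each of $f_{4,4,1}$, $f_{4,4,2}$, $g_{4,4,1}$, $g_{4,4,2}$ has the form (explicit monomial) $+$ (explicit polynomial)$\cdot\Phi$. Each is obtained by substituting the coefficients of Step~1 with the prescribed specializations, for instance $U_1(p,q,u,1,s,1)=us$ and $V_i(p,q,1,v,s,t)$. Summing gives
\[
F_{4,4}=A(x,p,q,u,v,s,t)+B(x,p,q,u,v,t)\,\Phi,
\]
with $A$ and $B$ explicit polynomials, in analogy with \eqref{336} and \eqref{346}.

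\textbf{Step 3: solving and substituting.} Set $v=1$ in this equation. The left side becomes $1+\Phi$, which gives $\Phi=(A|_{v=1}-1)/(1-B|_{v=1})$. Substituting this back yields
\[
F_{4,4}=\frac{A(1-B|_{v=1})+B(A|_{v=1}-1)}{1-B|_{v=1}}.
\]
Multiplying numerator and denominator by $-1$ should produce the stated $F_h$ and $F_g$. As a check, the denominator should be $1-B|_{v=1}$, containing $put x$, $pqut x^2$, and the $x^3,x^4$ terms, in agreement with $F_h$.

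\textbf{Main obstacle.} The algebra is routine; the difficulty is bookkeeping. One must get the right specializations in the $W_2$ term, since $W_2(p,q,1,v,1,t)$ and $W_2(p,q,1,v,s,t)$ both have two monomials. One must then verify the expanded numerator has exactly the 49 listed monomials, ideally with computer algebra. A sanity check is setting all variables to $1$ and comparing with $|S_n(P_4,\widetilde P_4)|$ computed from Theorem~\ref{bij}, i.e., restricted permutations with $-2\le\pi_i-i\le2$: $1,1,2,6,14,31,\dots$
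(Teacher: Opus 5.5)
Your plan is the paper's proof. You specialize Theorem~\ref{thm jll} to $j=\ell=4$ with the same coefficients $U_1=uvst$, $V_1=uvst$, $V_2=pu^2vst^2$, $W_1=uvst$, $W_2=pu^2vst^2+quv^2s^2t$, and the $h$-sums vanish because $\ell-4=0$. This gives the linear relation \eqref{eq:5.23}, and you eliminate $\Phi$ by setting $v=1$. Carrying this out does reproduce $F_h=B|_{v=1}-1$ and the $49$-term numerator $F_g=A\,(B|_{v=1}-1)-B\,(A|_{v=1}-1)$.

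\textbf{Your sanity check is wrong.} $F_{4,4}$ enumerates $S_n(2413,3142,P_4,\widetilde P_4)$, not $S_n(P_4,\widetilde P_4)$. Proposition~\ref{Proposition1} makes the separability condition superfluous only when $j=3$ or $\ell=3$. For $j=\ell=4$ it genuinely matters, since $2413$ and $3142$ themselves satisfy $-2\le \pi_i-i\le 2$. Setting all variables to $1$ in the stated formula gives $1/(1-x-x^2-3x^3-x^4)=1+x+2x^2+6x^3+12x^4+25x^5+\cdots$. This disagrees with your sequence $1,1,2,6,14,31$ from $n=4$ on, so your check would wrongly reject a correct formula.

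The right comparison is with the \emph{separable} restricted permutations. For $n=4$ that is $14-2=12$. For $n=5$ it is $31-6=25$, the six non-separable ones being $13524$, $14253$, $24135$, $24153$, $31425$, and $31524$.
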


\begin{proof}
Let $j=\ell=4$ in Theorem \ref{thm jll}. When $i=1$, $U_1=uvst$,   $V_1=uvst$, and $W_1=uvst$.  When $i=2$, $U_1=uvst$,  $V_2=pu^2vst^2$, and $W_2=pu^2vst^2+quv^2s^2t$. Then,
\begin{align}\label{eq:5.17}
F_{4,4} &= 1  + f_{4,4,0} + g_{4,4,0} +  f_{4,4,1} + g_{4,4,1} +  f_{4,4,2} + g_{4,4,2} 
\end{align}
where
\begin{align}
f_{4,4,0} & = uvstx + puvtx\left(F_{4,4}(x, p, q, u, 1, s, t) - 1\right), \label{eq:5.18}\\
g_{4,4,0} & = 0,\label{5.19} \\
f_{4,4,1} & = pquvx\Bigl(U_1(p,q,u,1,s,1)xV_{1}(p,q,1,v,s,t)x\Bigr)\nonumber\\
& \quad + p^2quvx\Bigl(U_1(p,q,u,1,1,1)xV_{1}(p,q,1,v,1,t)x\Bigr)\Bigl( F_{4,4}(x,p,q,u,1,s,t) - 1 \Bigr) \nonumber\\
& = pqu^2v^2s^2tx^3 + p^2qu^2v^2tx^3\left(F_{4,4}(x,p,q,u,1,s,t) - 1\right),\label{5.20}\\
g_{4,4,1} 
&= quvsx W_1(p,q,1,v,s,t)x + pquvxW_1(p,q,1,v,1,t)x\left( F_{4,4}(x,p,q,u,1,s,t) - 1\right)\nonumber \\
&= quv^2s^2tx^2 + pquv^2tx^2\left( F_{4,4}(x,p,q,u,1,s,t) - 1\right),\label{5.20a}\\
f_{4,4,2} 
&= pquvx \Bigl(U_1(p,q,u,1,s,1)xV_{2}(p,q,1,v,s,t)x^2 \Bigr)\nonumber\\
&\quad + p^2quvx\Bigl(U_1(p,q,u,1,1,1)xV_{2}(p,q,1,v,1,t)x^2\Bigr)\Bigl( F_{4,4}(x,p,q,u,1,s,t) - 1\Bigr)\nonumber \\
&= p^2qu^2v^2s^2t^2x^4 + p^3qu^2v^2t^2x^4\left( F_{4,4}(x,p,q,u,1,s,t) - 1\right),\label{5.21}\\
g_{4,4,2}
&= quvsx W_2(p,q,1,v,s,t)x^2 + pquvx W_2(p,q,1,v,1,t) x^2 \left(F_{4,4}(x,p,q,u,1,s,t) - 1\right)\nonumber \\
&= q^2uv^3s^3tx^3 + pquv^2t^2s^2x^3 + \bigl(pq^2uv^3tx^3+ p^2quv^2t^2x^3\bigr)\left(F_{4,4}(x,p,q,u,1,s,t) - 1\right).\label{eq:5.22}
\end{align}

Substituting \eqref{eq:5.18}--\eqref{eq:5.22} into \eqref{eq:5.17} yields
\begin{align}\label{eq:5.23}
F_{4,4} &= 1 + uvstx + pqu^2v^2s^2tx^3 + quv^2s^2tx^2 + p^2qu^2v^2s^2t^2x^4+ q^2uv^3s^3tx^3 \nonumber\\
&\quad  + pquv^2t^2s^2x^3  +  \bigl( puvtx  + p^2qu^2v^2tx^3 + pquv^2tx^2 + p^3qu^2v^2t^2x^4 + pq^2uv^3tx^3 \nonumber\\
&\quad + p^2quv^2t^2x^3\bigr)\left(F_{4,4}(x,p,q,u,1,s,t) - 1\right).
\end{align}
Letting $v$=1 in \eqref{eq:5.23}, and solving for $F_{4,4}(x,p,q,u,1,s,t)$, we have
\begin{equation}\label{eq:3.3}
F_{4,4}(x,p,q,u,1,s,t) = \frac{G_1}{G_2},
\end{equation}
where $G_2 = p^{3} q u^{2} t^{2} x^{4} + p^{2} q u^{2} t x^{3} + p^{2} q u t^{2} x^{3} + p q^{2} u t x^{3} + p q u t x^{2} + p u t x - 1$ and
\begin{align*}
G_1 &= \begin{aligned}[t]
      &- p^{2} q u^{2} s^{2} t^{2} x^{4} + p^{3} q u^{2} t^{2} x^{4} - p q u^{2} s^{2} t x^{3} - p q u s^{2} t^{2} x^{3} - q^{2} u s^{3} t x^{3} + p^{2} q u^{2} t x^{3}\\
      & + p^{2} q u t^{2} x^{3} + p q^{2} u t x^{3} - q u s^{2} t x^{2} + p q u t x^{2} + p u t x - u s t x - 1.
\end{aligned}
\end{align*}
Substituting \eqref{eq:3.3} into the right-hand side of \eqref{eq:5.23} yields the desired result.\end{proof}

\begin{cor}
The sets $S_{n}(2413,3142,P_4,\widetilde{P}_{4})$, for $n\geq 0$, are enumerated by the g.f. 
\begin{align*}
F_{4,4}(x) &= \frac{1}{1 - x - x^{2} - 3 x^{3} - x^4} = 1 + x + 2x^2 + 6x^3 + 12x^4 + 25x^5 + 57x^6 + 124x^7 + O(x^8).
\end{align*}
\end{cor}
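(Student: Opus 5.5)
The corollary should follow by specializing Theorem~\ref{44-thm} at $p=q=u=v=s=t=1$, exactly as in the earlier corollaries. The denominator $F_h$ then becomes
\[
F_h(1)=1+1+1+1+1+1-1=\dots
\]
but this naive substitution is misleading: the terms of $F_h$ must be collected by powers of $x$. Specializing gives $x^4+x^3+x^3+x^3+x^2+x-1=x^4+3x^3+x^2+x-1$, which is $-(1-x-x^2-3x^3-x^4)$. For the numerator $F_g$, it is cleaner not to substitute directly into its many monomials. Instead I will go back to equation \eqref{eq:5.23} and its specialization \eqref{eq:3.3}.

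At all variables equal to $1$, \eqref{eq:5.23} reads $F=1+x+x^2+2x^3+x^4+(x+x^2+3x^3+x^4)(F-1)$, where $F$ is both the left side and the $v=1$ specialization (they coincide once every variable is $1$). Let $D=x+x^2+3x^3+x^4$. Then $F-1 = (x+x^2+2x^3+x^4) + D(F-1)$, so
\[
F-1=\frac{x+x^2+2x^3+x^4}{1-D}.
\]
Hence $F=\frac{1-D+x+x^2+2x^3+x^4}{1-D}=\frac{1-x^3}{1-x-x^2-3x^3-x^4}$.

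This does not match the claimed form $1/(1-x-x^2-3x^3-x^4)$, so the main obstacle is to reconcile the two, either by rechecking the constant terms in \eqref{eq:5.23} or by checking the claim directly. I will test the claimed series against the bijection of Theorem~\ref{bij}. For $j=\ell=4$, the condition $-2\le\pi_i-i\le 2$ with $n=3$ allows all $6$ permutations, and the claimed series gives $x^3$ coefficient $1+2+3=6$ (from $a_3=a_2+a_1+3a_0=2+1+3$). My expression gives $6-1=5$, which suggests the $x^3$ constant term in \eqref{eq:5.23} is actually $3x^3$ from $g_{4,4,2}$ and $f_{4,4,1}$. Collecting carefully, $pqu^2v^2s^2tx^3+q^2uv^3s^3tx^3+pquv^2t^2s^2x^3$ contributes $3x^3$ and the $x^2$ term contributes $x^2$. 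So the numerator is $x+x^2+3x^3+x^4=D$, and $F-1=D/(1-D)$, giving $F=1/(1-D)$ as claimed.

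So the plan is: specialize \eqref{eq:5.23} at all variables $1$, collect the inhomogeneous part carefully by powers of $x$, and observe that it equals the coefficient $D$ of $(F-1)$. Solving then yields $F_{4,4}(x)=1/(1-x-x^2-3x^3-x^4)$. Finally, I will expand $a_n=a_{n-1}+a_{n-2}+3a_{n-3}+a_{n-4}$ to get $1,1,2,6,12,25,57,124$, and cross-check small terms against Theorem~\ref{bij} together with Proposition~\ref{Proposition1}, which here applies only through separability. The main risk is bookkeeping in collecting the monomials.
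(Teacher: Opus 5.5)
This is correct and essentially the paper's argument: the paper substitutes $p=q=u=v=s=t=1$ into the closed form $F_g/F_h$ of Theorem~\ref{44-thm}, whereas you substitute one step earlier into \eqref{eq:5.23}, where both the inhomogeneous part minus $1$ and the coefficient of $F-1$ become $D=x+x^2+3x^3+x^4$, so $F=1/(1-D)$ follows without handling the 49-monomial numerator $F_g$ (which does specialize to $-1$). In a write-up, drop the false start with $2x^3$, and note that for any cross-check via Theorem~\ref{bij} at $n\ge 4$ you must still discard $2413$ and $3142$ (e.g.\ $14-2=12$ for $n=4$), since Proposition~\ref{Proposition1} does not cover $j=\ell=4$.
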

\begin{proof}
Setting $p=q=u=v=s=t=1$ in Theorem~\ref{44-thm}, we get the desired result.
\end{proof}
\subsection{The g.f.\ of $S_n(2413,3142,P_4,\widetilde{P}_{5})$ }
\begin{thm}\label{45-thm}
The g.f.\ of $S_n(2413,3142,P_4,\widetilde{P}_{5})$ is
\[
F_{4,5}(x,p,q,u,v,s,t)=\frac{F_i}{F_j},
\]
where
$F_j = p^{4} q u^{3} t^{2} x^{5} + p^{3} q u^{3} t x^{4} + p^{3} q u^{2} t^{2} x^{4} + 3 p^{2} q^{2} u^{2} t x^{4} + p^{2} q u^{2} t x^{3} 
     + p^{2} q u t^{2} x^{3} + p q^{2} u t x^{3} + p q u t x^{2} + p u t x - 1$ and 
     
     \begin{small}
     \vspace{-0.5cm}   
\begin{align*}
F_i &= \begin{aligned}[t]
      & p^{5} q^{3} u^{5} v^{3} s^{3} t^{3} x^{9} - p^{5} q^{3} u^{5} v^{3} s^{2} t^{3} x^{9} - p^{5} q^{3} u^{5} v^{2} s^{3} t^{3} x^{9} + p^{5} q^{3} u^{5} v^{2} s^{2} t^{3} x^{9} + p^{4} q^{3} u^{5} v^{3} s^{3} t^{2} x^{8}\\
      & + 2 p^{4} q^{3} u^{4} v^{3} s^{3} t^{3} x^{8} - p^{4} q^{3} u^{5} v^{3} s^{2} t^{2} x^{8} - p^{4} q^{3} u^{5} v^{2} s^{3} t^{2} x^{8} - 2 p^{4} q^{3} u^{4} v^{3} s^{2} t^{3} x^{8} - 2 p^{4} q^{3} u^{4} v^{2} s^{3} t^{3} x^{8}\\
      & - p^{3} q^{4} u^{4} v^{3} s^{3} t^{2} x^{8} + p^{4} q^{3} u^{5} v^{2} s^{2} t^{2} x^{8} + 2 p^{4} q^{3} u^{4} v^{2} s^{2} t^{3} x^{8} + p^{3} q^{4} u^{4} v^{3} s^{2} t^{2} x^{8} + p^{3} q^{4} u^{4} v^{2} s^{3} t^{2} x^{8}\\
      & - p^{3} q^{4} u^{4} v^{2} s^{2} t^{2} x^{8} + 2 p^{3} q^{3} u^{4} v^{3} s^{3} t^{2} x^{7} + 2 p^{3} q^{3} u^{3} v^{3} s^{3} t^{3} x^{7} - 2 p^{3} q^{3} u^{4} v^{3} s^{2} t^{2} x^{7} - 2 p^{3} q^{3} u^{4} v^{2} s^{3} t^{2} x^{7} \\
      &- 2 p^{3} q^{3} u^{3} v^{3} s^{2} t^{3} x^{7} - 2 p^{3} q^{3} u^{3} v^{2} s^{3} t^{3} x^{7} + 2 p^{3} q^{3} u^{4} v^{2} s^{2} t^{2} x^{7} + 2 p^{3} q^{3} u^{3} v^{2} s^{2} t^{3} x^{7} + p^{4} q u^{4} v^{2} s^{2} t^{3} x^{6} \\
      &+ 2 p^{2} q^{3} u^{3} v^{3} s^{3} t^{2} x^{6} + p^{2} q^{3} u^{2} v^{3} s^{3} t^{3} x^{6} - p^{4} q u^{4} v^{2} s t^{3} x^{6} - p^{4} q u^{4} v s^{2} t^{3} x^{6} - 2 p^{2} q^{3} u^{3} v^{3} s^{2} t^{2} x^{6}\\
      & - 2 p^{2} q^{3} u^{3} v^{2} s^{3} t^{2} x^{6} - p^{2} q^{3} u^{2} v^{3} s^{2} t^{3} x^{6} - p^{2} q^{3} u^{2} v^{2} s^{3} t^{3} x^{6} + p^{4} q u^{4} v s t^{3} x^{6} + 2 p^{2} q^{3} u^{3} v^{2} s^{2} t^{2} x^{6} \\
      &+ p^{2} q^{3} u^{2} v^{2} s^{2} t^{3} x^{6} + p^{2} q^{2} u^{3} v^{3} s^{3} t^{2} x^{5} + p^{3} q u^{4} v^{2} s^{2} t^{2} x^{5} + p^{3} q u^{3} v^{2} s^{2} t^{3} x^{5} + p^{2} q^{2} u^{3} v^{3} s^{2} t^{2} x^{5} \\
      &+ p^{2} q^{2} u^{3} v^{2} s^{3} t^{2} x^{5} + p q^{3} u^{2} v^{3} s^{3} t^{2} x^{5} - p^{3} q u^{4} v^{2} s t^{2} x^{5} - p^{3} q u^{4} v s^{2} t^{2} x^{5} - p^{3} q u^{3} v^{2} s^{2} t^{2} x^{5} \\
      &- p^{3} q u^{3} v^{2} s t^{3} x^{5} - p^{3} q u^{3} v s^{2} t^{3} x^{5} - 2 p^{2} q^{2} u^{3} v^{3} s t^{2} x^{5} - 2 p^{2} q^{2} u^{3} v s^{3} t^{2} x^{5} - p q^{3} u^{2} v^{3} s^{2} t^{2} x^{5}\\
      & - p q^{3} u^{2} v^{2} s^{3} t^{2} x^{5} + p^{3} q u^{4} v s t^{2} x^{5} + p^{3} q u^{3} v s t^{3} x^{5} - p^{2} q^{2} u^{3} v^{2} s t^{2} x^{5} - p^{2} q^{2} u^{3} v s^{2} t^{2} x^{5}\\
      & + p q^{3} u^{2} v^{2} s^{2} t^{2} x^{5} + p q^{2} u^{2} v^{3} s^{3} t^{2} x^{4} + 3 p^{2} q^{2} u^{3} v s t^{2} x^{5} + p^{2} q u^{3} v^{2} s^{2} t^{2} x^{4} + p^{2} q u^{2} v^{2} s^{2} t^{3} x^{4} \\
      &- p q^{2} u^{2} v^{3} s^{3} t x^{4} + p^{4} q u^{3} t^{2} x^{5} - p^{2} q u^{3} v^{2} s^{2} t x^{4} - p^{2} q u^{3} v^{2} s t^{2} x^{4} - p^{2} q u^{3} v s^{2} t^{2} x^{4} - p^{2} q u^{2} v^{2} s^{2} t^{2} x^{4}\\
      & - p^{2} q u^{2} v^{2} s t^{3} x^{4} - p^{2} q u^{2} v s^{2} t^{3} x^{4} - p q^{2} u^{2} v^{3} s^{2} t x^{4} - p q^{2} u^{2} v^{3} s t^{2} x^{4} - p q^{2} u^{2} v^{2} s^{3} t x^{4}- p q^{2} u^{2} v s^{3} t^{2} x^{4} \\
      & + p^{2} q u^{3} v s t^{2} x^{4} + p^{2} q u^{2} v s t^{3} x^{4} + p q^{2} u^{2} v s t^{2} x^{4} + p q u^{2} v^{2} s^{2} t^{2} x^{3}- q^{2} u v^{3} s^{3} t x^{3} + p^{3} q u^{3} t x^{4}\\
      &  + p^{3} q u^{2} t^{2} x^{4} - p q u^{2} v^{2} s^{2} t x^{3} - p q u^{2} v^{2} s t^{2} x^{3} - p q u^{2} v s^{2} t^{2} x^{3} - p q u v^{2} s^{2} t^{2} x^{3} + 3 p^{2} q^{2} u^{2} t x^{4} \\
      &+ p q u^{2} v s t^{2} x^{3}+ p^{2} q u^{2} t x^{3} + p^{2} q u t^{2} x^{3} - q u v^{2} s^{2} t x^{2} + p q^{2} u t x^{3} + p q u t x^{2} - u v s t x + p u t x - 1.
\end{aligned}
\end{align*}
\end{small}
\end{thm}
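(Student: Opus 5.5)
We specialize Theorem~\ref{thm jll} to $j=4$ and $\ell=5$, exactly as in the proofs of Theorems~\ref{34-thm}, \ref{35-thm} and~\ref{44-thm}. Since $j-2=2$, we have
\[
F_{4,5}=1+\sum_{i=0}^{2}\bigl(f_{4,5,i}+g_{4,5,i}\bigr).
\]
The initial conditions \eqref{eq:5.4} give $g_{4,5,0}=0$ and $f_{4,5,0}=uvstx+puvtx\,(F_{4,5}(x,p,q,u,1,s,t)-1)$.

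The first step is to read off the needed coefficients from the expansions in the Remark after Theorem~\ref{thm jll}, since all sums involved are finite. For $f_{4,5,i}$ the index $a$ runs over $1\le a\le \ell-3=2$, so we need the following.
\begin{itemize}
\item $U_1,U_2$ from $G_{4-i}$. For $i=1$ this is $G_3$: $U_1=uvst$ and $U_2=pu^2vst^2+quv^2s^2t$. For $i=2$ this is $G_2$: $U_1=uvst$ and $U_2=pu^2vst^2$.
\item $V_i$ from $\widetilde G_{5-a-1}$, i.e.\ $\widetilde G_3$ for $a=1$ and $\widetilde G_2$ for $a=2$. This gives $V_1=uvst$ in both cases, and $V_2=pu^2vst^2+quv^2s^2t$ (from $\widetilde G_3$) or $V_2=pu^2vst^2$ (from $\widetilde G_2$).
\end{itemize}
For $g_{4,5,i}$ we need $W_1,W_2$ from $\widetilde G_4$, namely $W_1=uvst$ and $W_2=pu^2vst^2+quv^2s^2t$.

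The $h$-sum in \eqref{eq:5.3} is nonempty only for $i=2$, with $h=1$ (since $\min\{1,\ell-4\}=1$) and $m=1$ (since $\ell-h-3=1$). In that term $X_1=uvst$, $Z_1$ is the coefficient of $x$ in $G_{4-1}=G_3$, so $Z_1=uvst$, and $Y_1$ is the coefficient of $x$ in $\widetilde G_{5-1-1-1}=\widetilde G_2$, so $Y_1=uvst$. This extra term, which did not occur for $\ell\le4$, is responsible for the new monomials such as $3p^2q^2u^2tx^4$ in $F_j$.

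Next, we substitute these coefficients with the prescribed specializations. In each term some variables are set to $1$, for instance $U_a(p,q,u,1,s,1)$ and $V_i(p,q,1,v,1,t)$. Substituting into \eqref{eq:5.2}--\eqref{eq:5.3} and summing, we obtain a linear relation
\[
F_{4,5}=A(x,p,q,u,v,s,t)+B(x,p,q,u,v,s,t)\bigl(F_{4,5}(x,p,q,u,1,s,t)-1\bigr),
\]
where $A$ and $B$ are explicit polynomials. This is the analogue of \eqref{eq:5.23}.

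We then set $v=1$ and solve the resulting linear equation:
\[
F_{4,5}(x,p,q,u,1,s,t)=\frac{A|_{v=1}-B|_{v=1}}{1-B|_{v=1}}.
\]
We expect $-(1-B|_{v=1})$ to equal the stated denominator $F_j$. As a first check, the $v$-free part of $B$ at $v=1$ should be
\[
put x+pqutx^2+p^2qu^2tx^3+p^2qut^2x^3+pq^2utx^3+\cdots,
\]
matching the lower-order terms of $F_j$. Finally, we substitute this expression back into the relation above and clear denominators, which yields $F_i/F_j$.

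The main obstacle is bookkeeping rather than any new idea. We must correctly handle which variables are specialized to $1$ in each $U,V,W,X,Y,Z$ factor, and expand the large numerator (93 monomials) without error. We would carry this out with computer algebra, and verify the result by checking that $p=q=u=v=s=t=1$ gives a series whose initial coefficients agree with a brute-force count of $S_n(2413,3142,P_4,\widetilde P_5)$ for small $n$. For this check we can use Proposition~\ref{Proposition1}-type reasoning together with Theorem~\ref{bij}, which says these are the permutations with $-3\le\pi_i-i\le 2$ that are also separable.
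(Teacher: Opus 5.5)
Your proposal is correct and matches the paper's proof. The paper likewise specializes Theorem~\ref{thm jll} to $j=4$, $\ell=5$ and uses exactly your values of $U_a$, $V_i$, $W_i$ and $X_1=Z_1=Y_1=uvst$ in the $h$-term of $g_{4,5,2}$. It then collects the linear relation in $F_{4,5}(x,p,q,u,1,s,t)$, solves at $v=1$ with denominator $F_j=B|_{v=1}-1$, and back-substitutes.

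One side remark is slightly off: the $h$-term supplies only one of the three copies of $p^2q^2u^2tx^4$ in $F_j$. The other two come from the $q$-part of $U_2$ (with $a=2$) in $f_{4,5,1}$ and from the $q$-part of $V_2$ (with $a=1$) in $f_{4,5,2}$.
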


\begin{proof}
Let $j=4, \ell=5$ in Theorem \ref{thm jll}. When $i=1$, $U_1=uvst$,   $U_2=pu^2vst^2+quv^2s^2t$, $V_1=uvst$, and $W_1=uvst$.  When $i=2$, $U_1=uvst$,   $U_2=pu^2vst^2$, and  $W_2=pu^2vst^2+quv^2s^2t$, for the case $a=1$, $V_2=pu^2vst^2+quv^2s^2t$, for the case $a=2$, $V_2=pu^2vst^2$. Then,
\begin{align}\label{451}
F_{4,5} &= 1  + f_{4,5,0} + g_{4,5,0}+  f_{4,5,1} + g_{4,5,1}+  f_{4,5,2} + g_{4,5,2}
\end{align}
where
\begin{align}
f_{4,5,0} & = uvstx + puvtx\left(F_{4,5}(x, p, q, u, 1, s, t) - 1\right), \label{452}\\
g_{4,5,0} & = 0,\label{453} \\
f_{4,5,1}
&= pquvx \Bigl( U_1(p,q,u,1,s,1)  x  V_{1}(p,q,1,v,s,t)x  \nonumber \\
&\quad 
+ U_2(p,q,u,1,s,1)  x^2  V_{1}(p,q,1,v,s,t)  x \Bigr) \nonumber \\
&\quad + p^2quvx \Bigl( U_1(p,q,u,1,1,1)  x V_{1}(p,q,1,v,1,t) x \nonumber \\
&\quad 
 + U_2(p,q,u,1,1,1)  x^2  V_{1}(p,q,1,v,1,t) x \Bigr) \nonumber \\
&\quad \cdot \bigl( F_{4,5}(x,p,q,u,1,s,t) - 1 \bigr) \nonumber \\
&= pqu^2v^2s^2tx^3 + p^2qu^3v^2s^2tx^4 + pq^2u^2v^2s^3tx^4 \nonumber \\
&\quad + \bigl(p^2qu^2v^2tx^3 + p^3qu^3v^2tx^4 + p^2q^2u^2v^2tx^4 \bigr) \bigl(F_{4,5}(x,p,q,u,1,s,t) - 1\bigr) ,\label{454}\\
g_{4,5,1} 
&= quvsx W_1(p,q,1,v,s,t)x \nonumber \\
&\quad + pquvxW_1(p,q,1,v,1,t)x\left( F_{4,5}(x,p,q,u,1,s,t) - 1\right)\nonumber \\
&= quv^2s^2tx^2 + pquv^2tx^2\left( F_{4,5}(x,p,q,u,1,s,t) - 1\right),\label{455}\\
f_{4,5,2}
&= pquvx \Bigl( U_1(p,q,u,1,s,1)  x  V_{2}(p,q,1,v,s,t) x^2 \nonumber \\
&\quad + U_2(p,q,u,1,s,1) x^2  V_{2}(p,q,1,v,s,t) x^2 \Bigr) \nonumber \\
&\quad + p^2quvx \Bigl( U_1(p,q,u,1,1,1)x V_{2}(p,q,1,v,1,t)  x^2 \nonumber \\
&\quad + U_2(p,q,u,1,1,1)x^2  V_{2}(p,q,1,v,1,t)  x^2 \Bigr) \nonumber \\
&\quad \cdot \bigl( F_{4,5}(x,p,q,u,1,s,t) - 1 \bigr) \nonumber \\
&= p^2qu^2v^2s^2t^2x^4 + pq^2u^2v^3s^3tx^4 + p^3qu^3v^2s^2t^2x^5 \nonumber \\
&\quad + (p^3qu^2v^2t^2x^4+p^2q^2u^2v^3tx^4+p^4qu^3v^2t^2x^5)  \bigl( F_{4,5}(x,p,q,u,1,s,t) - 1 \bigr),\label{456}\\
g_{4,5,2}
&= quvsx  W_2(p,q,1,v,s,t)  x^2 \nonumber \\
&\quad  + pquvx W_2(p,q,1,v,1,t)  x^2  \bigl(F_{4,5}(x,p,q,u,1,s,t) - 1\bigr) \nonumber \\
&\quad +pq^2uvxX_1(p,q,1,v,1,1)xZ_1(p,q,u,1,s,1)xY_{1}(p,q,1,v,s,t)x\nonumber \\
           &\quad + p^2q^2uvxX_1(p,q,1,v,1,1)x
               Z_1(p,q,u,1,1,1)x\nonumber \\
&\quad \cdot Y_{1}(p,q,1,v,1,t)x
             \big(F_{4,5}(x,p,q,u,1,s,t)-1\big)\nonumber\\
&= q^2uv^3s^3tx^3 + pquv^2t^2s^2x^3 + pq^2u^2v^3s^2tx^4 \nonumber \\
&\quad + (pq^2uv^3tx^3 +p^2quv^2t^2x^3+p^2q^2u^2v^3tx^4) \bigl(F_{4,5}(x,p,q,u,1,s,t) - 1\bigr).\label{457}
\end{align}

Substituting \eqref{452}--\eqref{457} into \eqref{451} yields
\begin{align}\label{458}
F_{4,5} 
&= 1 + uvstx + puvtx \bigl(F_{4,5}(x, p, q, 1, v, s, t) - 1\bigr) \nonumber \\
&\quad + pqu^2v^2s^2tx^3 + p^2qu^3v^2s^2tx^4 + pq^2u^2v^2s^3tx^4 \nonumber \\
&\quad + (p^2qu^2v^2tx^3+p^3qu^3v^2tx^4+p^2q^2u^2v^2tx^4) \bigl(F_{4,5}(x,p,q,u,1,s,t) - 1\bigr) \nonumber \\
&\quad + quv^2s^2tx^2 + pquv^2tx^2 \bigl(F_{4,5}(x,p,q,u,1,s,t) - 1\bigr) \nonumber \\
&\quad + p^2qu^2v^2s^2t^2x^4 + pq^2u^2v^3s^3tx^4 + p^3qu^3v^2s^2t^2x^5 \nonumber \\
&\quad + (p^3qu^2v^2t^2x^4+p^2q^2u^2v^3tx^4+p^4qu^3v^2t^2x^5) \bigl(F_{4,5}(x,p,q,u,1,s,t) - 1\bigr) \nonumber \\
&\quad + q^2uv^3s^3tx^3 + pquv^2t^2s^2x^3 + pq^2u^2v^3s^2tx^4 \nonumber \\
&\quad + (pq^2uv^3tx^3+p^2quv^2t^2x^3+p^2q^2u^2v^3tx^4) \bigl(F_{4,5}(x,p,q,u,1,s,t) - 1\bigr).
\end{align}
Letting $v$=1 in \eqref{458}, and solving it for $F_{4,5}(x,p,q,u,1,s,t)$, we have 
\begin{equation}\label{4510}
F_{4,5}(x,p,q,u,1,s,t)= \frac{Q_1}{Q_2},
\end{equation}
where
\begin{align*}
Q_2 &= p^{4} q u^{3} t^{2} x^{5} + p^{3} q u^{3} t x^{4} + p^{3} q u^{2} t^{2} x^{4} + 3 p^{2} q^{2} u^{2} t x^{4} + p^{2} q u^{2} t x^{3}+ p^{2} q u t^{2} x^{3} \\
    &\quad  + p q^{2} u t x^{3} + p q u t x^{2} + p u t x - 1, \\[4pt]
Q_1 &= \begin{aligned}[t]
      &- p^{3} q u^{3} s^{2} t^{2} x^{5} + p^{4} q u^{3} t^{2} x^{5} - p^{2} q u^{3} s^{2} t x^{4} - p^{2} q u^{2} s^{2} t^{2} x^{4}- 2 p q^{2} u^{2} s^{3} t x^{4} + p^{3} q u^{3} t x^{4} \\
      & + p^{3} q u^{2} t^{2} x^{4} - p q^{2} u^{2} s^{2} t x^{4} + 3 p^{2} q^{2} u^{2} t x^{4}- p q u^{2} s^{2} t x^{3} - p q u s^{2} t^{2} x^{3} - q^{2} u s^{3} t x^{3} \\
      & + p^{2} q u^{2} t x^{3} + p^{2} q u t^{2} x^{3} + p q^{2} u t x^{3} - q u s^{2} t x^{2} + p q u t x^{2} + p u t x - u s t x - 1.
\end{aligned}
\end{align*}
Substituting \eqref{4510} into the right-hand side of \eqref{458} yields the desired formula for $F_{4,5}$.
\end{proof}

\begin{cor}
The g.f.\ $F_{4,5}(x)$ is given by
\begin{align*}
 &\frac{1}{1-x-x^{2}-3 x^{3}-5 x^{4}-x^5}=1 + x + 2 x^2 + 6 x^3 + 16 x^4 + 34 x^5 + 79 x^6 + 193 x^7 + O(x^8).
\end{align*}
\end{cor}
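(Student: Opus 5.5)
The plan is to follow the pattern of the earlier corollaries and obtain $F_{4,5}(x)$ from Theorem~\ref{45-thm} by setting $p=q=u=v=s=t=1$. Computing numerator and denominator polynomials at that point would be tedious. Instead I will specialize the functional equation \eqref{458} directly, since at $v=1$ and all other variables equal to $1$ the function $F_{4,5}(x,p,q,u,1,s,t)$ coincides with $F_{4,5}(x)$ itself. Writing $F=F_{4,5}(x)$, equation \eqref{458} then becomes a linear equation of the form
\[
F = 1 + A(x) + B(x)\,(F-1).
\]
Here $A(x)$ collects the constant terms. These are $x$ from $f_{4,5,0}$; $x^3+2x^4$ from $f_{4,5,1}$; $x^2$ from $g_{4,5,1}$; $2x^4+x^5$ from $f_{4,5,2}$; and $2x^3+x^4$ from $g_{4,5,2}$. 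The total is $A=x+x^2+3x^3+5x^4+x^5$. The coefficients of $F-1$ give exactly the same monomials, so $B=A$.

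From there the solution is $F=1+A+A(F-1)$, that is, $(1-A)F=1$. Hence
\[
F=\frac{1}{1-x-x^2-3x^3-5x^4-x^5}.
\]
As a consistency check, the same value follows from $Q_2$: setting all variables to $1$ in $Q_2$ gives $A-1$, which is compatible with this form, and $Q_1$ should reduce to $-1$. One subtlety is that \eqref{458} contains the typo $F_{4,5}(x,p,q,1,v,s,t)$ in the first line. That is harmless here, because all arguments other than $x$ become $1$ anyway. Finally, I will expand the rational function as a power series using the recurrence $a_n=a_{n-1}+a_{n-2}+3a_{n-3}+5a_{n-4}+a_{n-5}$ with $a_0=1$, which gives $1,1,2,6,16,34,79,193$.

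The main obstacle is bookkeeping: making sure every monomial in \eqref{454}--\eqref{457} is counted with its correct multiplicity when specialized. For instance, $f_{4,5,1}$ contributes $2x^4$ to $A$, and $f_{4,5,2}$ contributes $2x^4$. As an independent sanity check, I would compare small values with Theorem~\ref{bij} and Proposition~\ref{Proposition1}. For example, $n=3$ gives $6$, since all of $S_3$ satisfies $-3\le \pi_i-i\le 2$ and avoids $2413,3142$. For $n=4$, I would count the $4!$ permutations satisfying $-3\le\pi_i-i\le 2$, excluding those with $\pi_1=4$ or $\pi_4=1$ (which number $6+6-2=10$, leaving $14$), and then removing $2413$ and $3142$, which leaves $12$. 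But that case is $F_{4,4}$; for $F_{4,5}$, only $\pi_1=4$ is excluded among constraint violations, giving $18$, and removing $2413$ and $3142$ gives $16$, which matches.
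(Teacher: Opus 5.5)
Your proposal is correct and is essentially the paper's argument. The paper sets $p=q=u=v=s=t=1$ in the closed form of Theorem~\ref{45-thm}; you set them to $1$ in the functional equation \eqref{458} before solving it. This is equivalent and spares you the 93-monomial numerator; your tallies $A=B=x+x^2+3x^3+5x^4+x^5$, the checks $Q_2\mapsto A-1$ and $Q_1\mapsto -1$, and the coefficients $1,1,2,6,16,34,79,193$ are all right.

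The only slip is in the $F_{4,4}$ aside of your sanity check, where the constraint should read $-2\le\pi_i-i\le 2$. Your $F_{4,5}$ count of $16$ for $n=4$ is correct.
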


\begin{proof}
Setting $p=q=u=v=s=t=1$ in Theorem~\ref{45-thm}, we get the desired result.
\end{proof}
\subsection{The g.f.\ of $S_n(2413,3142,P_5,\widetilde{P}_{5})$ }
\begin{thm}\label{55-thm}
The g.f.\ of $S_n(2413,3142,P_5,\widetilde{P}_{5})$ is
\[
F_{5,5}(x,p,q,u,v,s,t)=\frac{F_1+F_2+F_3+F_4+F_5}{F_m},
\]
where
\begin{small}
\begin{align*}
F_m &= p^{5} q u^{3} t^{3} x^{6} + p^{4} q u^{3} t^{2} x^{5} + p^{4} q u^{2} t^{3} x^{5} + 5 p^{3} q^{2} u^{2} t^{2} x^{5} + p^{3} q u^{3} t x^{4} \\
    &\quad + p^{3} q u^{2} t^{2} x^{4} + p^{3} q u t^{3} x^{4} + 3 p^{2} q^{2} u^{2} t x^{4} + 3 p^{2} q^{2} u t^{2} x^{4} + p^{2} q^{2} u t x^{4} \\
    &\quad + p q^{3} u t x^{4} + p^{2} q u^{2} t x^{3} + p^{2} q u t^{2} x^{3} + p q^{2} u t x^{3} + p q u t x^{2} + p u t x - 1,
\end{align*}
\begin{align*}
F_1 &= p^{7} q^{3} u^{5} v^{3} s^{3} t^{5} x^{11} - p^{7} q^{3} u^{5} v^{3} s^{2} t^{5} x^{11} - p^{7} q^{3} u^{5} v^{2} s^{3} t^{5} x^{11} + p^{7} q^{3} u^{5} v^{2} s^{2} t^{5} x^{11} + p^{5} q^{4} u^{4} v^{4} s^{4} t^{4} x^{10} \\
    &\quad + 2 p^{6} q^{3} u^{5} v^{3} s^{3} t^{4} x^{10} + 2 p^{6} q^{3} u^{4} v^{3} s^{3} t^{5} x^{10} - 2 p^{6} q^{3} u^{5} v^{3} s^{2} t^{4} x^{10}- 2 p^{6} q^{3} u^{5} v^{2} s^{3} t^{4} x^{10} + p^{6} q^{3} u^{4} v^{3} s^{3} t^{4} x^{10} \\
    &\quad  - 2 p^{6} q^{3} u^{4} v^{3} s^{2} t^{5} x^{10} - 2 p^{6} q^{3} u^{4} v^{2} s^{3} t^{5} x^{10}- p^{5} q^{4} u^{4} v^{4} s^{2} t^{4} x^{10} - 4 p^{5} q^{4} u^{4} v^{3} s^{3} t^{4} x^{10} - p^{5} q^{4} u^{4} v^{2} s^{4} t^{4} x^{10} \\
    &\quad  + 2 p^{6} q^{3} u^{5} v^{2} s^{2} t^{4} x^{10} - p^{6} q^{3} u^{4} v^{3} s^{2} t^{4} x^{10} - p^{6} q^{3} u^{4} v^{2} s^{3} t^{4} x^{10} + 2 p^{6} q^{3} u^{4} v^{2} s^{2} t^{5} x^{10} + 4 p^{5} q^{4} u^{4} v^{3} s^{2} t^{4} x^{10}\\
    &\quad  + 4 p^{5} q^{4} u^{4} v^{2} s^{3} t^{4} x^{10} + p^{4} q^{4} u^{4} v^{4} s^{4} t^{3} x^{9} + p^{4} q^{4} u^{3} v^{4} s^{4} t^{4} x^{9} + p^{6} q^{3} u^{4} v^{2} s^{2} t^{4} x^{10}- 3 p^{5} q^{4} u^{4} v^{2} s^{2} t^{4} x^{10} \\
    &\quad  + 2 p^{5} q^{3} u^{5} v^{3} s^{3} t^{3} x^{9} + 4 p^{5} q^{3} u^{4} v^{3} s^{3} t^{4} x^{9} + 2 p^{5} q^{3} u^{3} v^{3} s^{3} t^{5} x^{9} + 5 p^{3} q^{5} u^{3} v^{4} s^{4} t^{3} x^{9} - 2 p^{5} q^{3} u^{5} v^{3} s^{2} t^{3} x^{9}\\
    &\quad  - 2 p^{5} q^{3} u^{5} v^{2} s^{3} t^{3} x^{9} + p^{5} q^{3} u^{4} v^{3} s^{3} t^{3} x^{9},
\end{align*}
\begin{align*}
F_2 &= - 4 p^{5} q^{3} u^{4} v^{3} s^{2} t^{4} x^{9} - 4 p^{5} q^{3} u^{4} v^{2} s^{3} t^{4} x^{9} + p^{5} q^{3} u^{3} v^{3} s^{3} t^{4} x^{9}- 2 p^{5} q^{3} u^{3} v^{3} s^{2} t^{5} x^{9} - 2 p^{5} q^{3} u^{3} v^{2} s^{3} t^{5} x^{9}\\
    &\quad   - p^{4} q^{4} u^{4} v^{4} s^{2} t^{3} x^{9} - 4 p^{4} q^{4} u^{4} v^{3} s^{3} t^{3} x^{9} - p^{4} q^{4} u^{4} v^{2} s^{4} t^{3} x^{9}- p^{4} q^{4} u^{3} v^{4} s^{2} t^{4} x^{9} - 4 p^{4} q^{4} u^{3} v^{3} s^{3} t^{4} x^{9}\\
    &\quad   - p^{4} q^{4} u^{3} v^{2} s^{4} t^{4} x^{9} - 3 p^{3} q^{5} u^{3} v^{4} s^{3} t^{3} x^{9} - 3 p^{3} q^{5} u^{3} v^{3} s^{4} t^{3} x^{9} + 2 p^{5} q^{3} u^{5} v^{2} s^{2} t^{3} x^{9} - p^{5} q^{3} u^{4} v^{3} s^{2} t^{3} x^{9} \\
    &\quad - p^{5} q^{3} u^{4} v^{2} s^{3} t^{3} x^{9} + 4 p^{5} q^{3} u^{4} v^{2} s^{2} t^{4} x^{9} - p^{5} q^{3} u^{3} v^{3} s^{2} t^{4} x^{9} - p^{5} q^{3} u^{3} v^{2} s^{3} t^{4} x^{9} + 2 p^{5} q^{3} u^{3} v^{2} s^{2} t^{5} x^{9}\\
    &\quad  + 4 p^{4} q^{4} u^{4} v^{3} s^{2} t^{3} x^{9} + 4 p^{4} q^{4} u^{4} v^{2} s^{3} t^{3} x^{9} + 4 p^{4} q^{4} u^{3} v^{3} s^{2} t^{4} x^{9} + 4 p^{4} q^{4} u^{3} v^{2} s^{3} t^{4} x^{9} - 2 p^{3} q^{5} u^{3} v^{4} s^{2} t^{3} x^{9}\\
    &\quad  + p^{3} q^{5} u^{3} v^{3} s^{3} t^{3} x^{9} - 2 p^{3} q^{5} u^{3} v^{2} s^{4} t^{3} x^{9},
\end{align*}
\begin{align*}
F_3 &= p^{3} q^{4} u^{4} v^{4} s^{4} t^{2} x^{8} + p^{3} q^{4} u^{3} v^{4} s^{4} t^{3} x^{8} + p^{3} q^{4} u^{2} v^{4} s^{4} t^{4} x^{8} + p^{5} q^{3} u^{4} v^{2} s^{2} t^{3} x^{9} + p^{5} q^{3} u^{3} v^{2} s^{2} t^{4} x^{9}\\
    &\quad  - 3 p^{4} q^{4} u^{4} v^{2} s^{2} t^{3} x^{9} - 3 p^{4} q^{4} u^{3} v^{2} s^{2} t^{4} x^{9} + p^{4} q^{3} u^{5} v^{3} s^{3} t^{2} x^{8} + 4 p^{4} q^{3} u^{4} v^{3} s^{3} t^{3} x^{8} + 4 p^{4} q^{3} u^{3} v^{3} s^{3} t^{4} x^{8}\\
    &\quad  + p^{4} q^{3} u^{2} v^{3} s^{3} t^{5} x^{8} + 2 p^{3} q^{5} u^{3} v^{3} s^{2} t^{3} x^{9}+ 2 p^{3} q^{5} u^{3} v^{2} s^{3} t^{3} x^{9} + 3 p^{2} q^{5} u^{3} v^{4} s^{4} t^{2} x^{8} + 3 p^{2} q^{5} u^{2} v^{4} s^{4} t^{3} x^{8}  \\
    &\quad  - p^{4} q^{3} u^{5} v^{3} s^{2} t^{2} x^{8} - p^{4} q^{3} u^{5} v^{2} s^{3} t^{2} x^{8} + p^{4} q^{3} u^{4} v^{3} s^{3} t^{2} x^{8} - 4 p^{4} q^{3} u^{4} v^{3} s^{2} t^{3} x^{8} - 4 p^{4} q^{3} u^{4} v^{2} s^{3} t^{3} x^{8}\\
    &\quad  + p^{4} q^{3} u^{3} v^{3} s^{3} t^{3} x^{8} - 4 p^{4} q^{3} u^{3} v^{3} s^{2} t^{4} x^{8} - 4 p^{4} q^{3} u^{3} v^{2} s^{3} t^{4} x^{8} + p^{4} q^{3} u^{2} v^{3} s^{3} t^{4} x^{8} - p^{4} q^{3} u^{2} v^{3} s^{2} t^{5} x^{8}\\
    &\quad  - p^{4} q^{3} u^{2} v^{2} s^{3} t^{5} x^{8}- p^{3} q^{4} u^{4} v^{4} s^{2} t^{2} x^{8} - p^{3} q^{4} u^{4} v^{3} s^{3} t^{2} x^{8} - p^{3} q^{4} u^{4} v^{2} s^{4} t^{2} x^{8} - p^{3} q^{4} u^{3} v^{4} s^{2} t^{3} x^{8}\\
    &\quad  - 2 p^{3} q^{4} u^{3} v^{3} s^{3} t^{3} x^{8} - p^{3} q^{4} u^{3} v^{2} s^{4} t^{3} x^{8} - p^{3} q^{4} u^{2} v^{4} s^{2} t^{4} x^{8} - p^{3} q^{4} u^{2} v^{3} s^{3} t^{4} x^{8} - p^{3} q^{4} u^{2} v^{2} s^{4} t^{4} x^{8}\\
    &\quad  - 2 p^{2} q^{5} u^{3} v^{4} s^{3} t^{2} x^{8}- 2 p^{2} q^{5} u^{3} v^{3} s^{4} t^{2} x^{8} + p^{2} q^{5} u^{2} v^{4} s^{4} t^{2} x^{8} - 2 p^{2} q^{5} u^{2} v^{4} s^{3} t^{3} x^{8} - 2 p^{2} q^{5} u^{2} v^{3} s^{4} t^{3} x^{8} \\
    &\quad + p^{4} q^{3} u^{5} v^{2} s^{2} t^{2} x^{8}  - p^{4} q^{3} u^{4} v^{3} s^{2} t^{2} x^{8} - p^{4} q^{3} u^{4} v^{2} s^{3} t^{2} x^{8} + 4 p^{4} q^{3} u^{4} v^{2} s^{2} t^{3} x^{8} - p^{4} q^{3} u^{3} v^{3} s^{2} t^{3} x^{8} \\
    &\quad - p^{4} q^{3} u^{3} v^{2} s^{3} t^{3} x^{8} + 4 p^{4} q^{3} u^{3} v^{2} s^{2} t^{4} x^{8} - p^{4} q^{3} u^{2} v^{3} s^{2} t^{4} x^{8} - p^{4} q^{3} u^{2} v^{2} s^{3} t^{4} x^{8} + p^{4} q^{3} u^{2} v^{2} s^{2} t^{5} x^{8} \\
    &\quad + p^{3} q^{4} u^{4} v^{3} s^{2} t^{2} x^{8} + p^{3} q^{4} u^{4} v^{2} s^{3} t^{2} x^{8} + 2 p^{3} q^{4} u^{3} v^{3} s^{2} t^{3} x^{8}, 
\end{align*}
\begin{align*}
F_4 &=  2 p^{3} q^{4} u^{3} v^{2} s^{3} t^{3} x^{8} + p^{3} q^{4} u^{2} v^{3} s^{2} t^{4} x^{8} + p^{3} q^{4} u^{2} v^{2} s^{3} t^{4} x^{8}- p^{2} q^{5} u^{3} v^{4} s^{2} t^{2} x^{8} + p^{2} q^{5} u^{3} v^{3} s^{3} t^{2} x^{8} \\
    &\quad   - p^{2} q^{5} u^{3} v^{2} s^{4} t^{2} x^{8}- p^{2} q^{5} u^{2} v^{4} s^{3} t^{2} x^{8} - p^{2} q^{5} u^{2} v^{4} s^{2} t^{3} x^{8} - p^{2} q^{5} u^{2} v^{3} s^{4} t^{2} x^{8} + p^{2} q^{5} u^{2} v^{3} s^{3} t^{3} x^{8} \\
    &\quad  - p^{2} q^{5} u^{2} v^{2} s^{4} t^{3} x^{8} + p^{2} q^{4} u^{3} v^{4} s^{4} t^{2} x^{7}  + p^{2} q^{4} u^{2} v^{4} s^{4} t^{3} x^{7} + p^{5} q u^{4} v^{2} s^{2} t^{4} x^{7} + p^{4} q^{3} u^{4} v^{2} s^{2} t^{2} x^{8}\\
    &\quad+ p^{4} q^{3} u^{3} v^{2} s^{2} t^{3} x^{8}  + p^{4} q^{3} u^{2} v^{2} s^{2} t^{4} x^{8} - p^{3} q^{4} u^{3} v^{2} s^{2} t^{3} x^{8} +2 p^{3} q^{3} u^{4} v^{3} s^{3} t^{2} x^{7} + 4 p^{3} q^{3} u^{3} v^{3} s^{3} t^{3} x^{7} \\
    &\quad+ 2 p^{3} q^{3} u^{2} v^{3} s^{3} t^{4} x^{7} + p^{2} q^{5} u^{3} v^{3} s^{2} t^{2} x^{8} + p^{2} q^{5} u^{3} v^{2} s^{3} t^{2} x^{8} + p^{2} q^{5} u^{2} v^{3} s^{3} t^{2} x^{8} + p^{2} q^{5} u^{2} v^{3} s^{2} t^{3} x^{8} \\
    &\quad + p^{2} q^{5} u^{2} v^{2} s^{3} t^{3} x^{8} + p q^{5} u^{2} v^{4} s^{4} t^{2} x^{7}- p^{5} q u^{4} v^{2} s t^{4} x^{7} - p^{5} q u^{4} v s^{2} t^{4} x^{7} - 2 p^{3} q^{3} u^{4} v^{3} s^{2} t^{2} x^{7} \\
    &\quad  - 2 p^{3} q^{3} u^{4} v^{2} s^{3} t^{2} x^{7} + p^{3} q^{3} u^{3} v^{3} s^{3} t^{2} x^{7} - 4 p^{3} q^{3} u^{3} v^{3} s^{2} t^{3} x^{7} - 4 p^{3} q^{3} u^{3} v^{2} s^{3} t^{3} x^{7} + p^{3} q^{3} u^{2} v^{3} s^{3} t^{3} x^{7} \\
    &\quad  - 2 p^{3} q^{3} u^{2} v^{3} s^{2} t^{4} x^{7} - 2 p^{3} q^{3} u^{2} v^{2} s^{3} t^{4} x^{7} - p^{2} q^{4} u^{3} v^{4} s^{2} t^{2} x^{7} - p^{2} q^{4} u^{3} v^{2} s^{4} t^{2} x^{7} - p^{2} q^{4} u^{2} v^{4} s^{2} t^{3} x^{7}\\
    &\quad  - p^{2} q^{4} u^{2} v^{2} s^{4} t^{3} x^{7} - p q^{5} u^{2} v^{4} s^{3} t^{2} x^{7} - p q^{5} u^{2} v^{3} s^{4} t^{2} x^{7} + p^{5} q u^{4} v s t^{4} x^{7}+ 2 p^{3} q^{3} u^{4} v^{2} s^{2} t^{2} x^{7}\\
    &\quad - p^{3} q^{3} u^{3} v^{3} s^{2} t^{2} x^{7} - p^{3} q^{3} u^{3} v^{2} s^{3} t^{2} x^{7} + 4 p^{3} q^{3} u^{3} v^{2} s^{2} t^{3} x^{7} - p^{3} q^{3} u^{2} v^{3} s^{2} t^{3} x^{7} - p^{3} q^{3} u^{2} v^{2} s^{3} t^{3} x^{7}\\
     &\quad   + p^{3} q u^{4} v^{2} s^{2} t^{2} x^{5} + p^{3} q u^{3} v^{2} s^{2} t^{3} x^{5} + p^{3} q u^{2} v^{2} s^{2} t^{4} x^{5} + p q u t x^{2} - u v s t x + p u t x\\
\end{align*}
\begin{align*}
F_5 &=  2 p^{3} q^{3} u^{2} v^{2} s^{2} t^{4} x^{7} + p^{3} q^{2} u^{3} v^{3} s^{3} t^{3} x^{6}+ p q^{5} u^{2} v^{3} s^{3} t^{2} x^{7} + p q^{4} u^{2} v^{4} s^{4} t^{2} x^{6} + p^{4} q u^{4} v^{2} s^{2} t^{3} x^{6}\\
    &\quad   + p^{4} q u^{3} v^{2} s^{2} t^{4} x^{6} + p^{3} q^{3} u^{3} v^{2} s^{2} t^{2} x^{7} + p^{3} q^{3} u^{2} v^{2} s^{2} t^{3} x^{7} + 2 p^{3} q^{2} u^{3} v^{3} s^{2} t^{3} x^{6} + 2 p^{3} q^{2} u^{3} v^{2} s^{3} t^{3} x^{6} \\
     &\quad + p^{2} q^{4} u^{3} v^{2} s^{2} t^{2} x^{7} + p^{2} q^{4} u^{2} v^{2} s^{2} t^{3} x^{7} + 2 p^{2} q^{3} u^{3} v^{3} s^{3} t^{2} x^{6} + 2 p^{2} q^{3} u^{2} v^{3} s^{3} t^{3} x^{6}- p^{4} q u^{4} v^{2} s t^{3} x^{6}\\
    &\quad   - p^{4} q u^{4} v s^{2} t^{3} x^{6} - p^{4} q u^{3} v^{2} s^{2} t^{3} x^{6} - p^{4} q u^{3} v^{2} s t^{4} x^{6} - p^{4} q u^{3} v s^{2} t^{4} x^{6}  - 3 p^{3} q^{2} u^{3} v^{3} s t^{3} x^{6}\\
    &\quad  - 3 p^{3} q^{2} u^{3} v s^{3} t^{3} x^{6} - 2 p^{2} q^{3} u^{3} v^{3} s^{2} t^{2} x^{6}- 2 p^{2} q^{3} u^{3} v^{2} s^{3} t^{2} x^{6} + p^{2} q^{3} u^{2} v^{3} s^{3} t^{2} x^{6} - 2 p^{2} q^{3} u^{2} v^{3} s^{2} t^{3} x^{6} \\
    &\quad - 2 p^{2} q^{3} u^{2} v^{2} s^{3} t^{3} x^{6} - p q^{4} u^{2} v^{4} s^{2} t^{2} x^{6} - p q^{4} u^{2} v^{2} s^{4} t^{2} x^{6} + p q^{3} u^{2} v^{4} s^{4} t^{2} x^{5} + p^{4} q u^{4} v s t^{3} x^{6}\\
    &\quad  + p^{4} q u^{3} v s t^{4} x^{6} - 2 p^{3} q^{2} u^{3} v^{2} s t^{3} x^{6} - 2 p^{3} q^{2} u^{3} v s^{2} t^{3} x^{6} + 2 p^{2} q^{3} u^{3} v^{2} s^{2} t^{2} x^{6} - p^{2} q^{3} u^{2} v^{3} s^{2} t^{2} x^{6}\\
    &\quad  - p^{2} q^{3} u^{2} v^{2} s^{3} t^{2} x^{6} + 2 p^{2} q^{3} u^{2} v^{2} s^{2} t^{3} x^{6} + p^{2} q^{2} u^{3} v^{3} s^{3} t^{2} x^{5}+ p^{2} q^{2} u^{2} v^{3} s^{3} t^{3} x^{5} + 5 p^{3} q^{2} u^{3} v s t^{3} x^{6}\\
    &\quad   +p^{2} q^{3} u^{2} v^{2} s^{2} t^{2} x^{6} + p^{2} q^{2} u^{3} v^{3} s^{2} t^{2} x^{5} + p^{2} q^{2} u^{3} v^{2} s^{3} t^{2} x^{5} + p^{2} q^{2} u^{2} v^{3} s^{2} t^{3} x^{5}+ p^{2} q^{2} u^{2} v^{2} s^{3} t^{3} x^{5}\\
    &\quad   + p q^{4} u^{2} v^{2} s^{2} t^{2} x^{6} + p q^{3} u^{2} v^{3} s^{3} t^{2} x^{5} + p^{5} q u^{3} t^{3} x^{6}- p^{3} q u^{4} v^{2} s t^{2} x^{5}  - p^{3} q u^{4} v s^{2} t^{2} x^{5}\\
    &\quad   - p^{3} q u^{3} v^{2} s^{2} t^{2} x^{5} - p^{3} q u^{3} v^{2} s t^{3} x^{5} - p^{3} q u^{3} v s^{2} t^{3} x^{5} - p^{3} q u^{2} v^{2} s^{2} t^{3} x^{5} - p^{3} q u^{2} v^{2} s t^{4} x^{5}\\
    &\quad  - p^{3} q u^{2} v s^{2} t^{4} x^{5}  - 2 p^{2} q^{2} u^{3} v^{3} s t^{2} x^{5} - 2 p^{2} q^{2} u^{3} v s^{3} t^{2} x^{5} - 2 p^{2} q^{2} u^{2} v^{3} s^{2} t^{2} x^{5} - 2 p^{2} q^{2} u^{2} v^{3} s t^{3} x^{5} \\
    &\quad - 2 p^{2} q^{2} u^{2} v^{2} s^{3} t^{2} x^{5}  - 2 p^{2} q^{2} u^{2} v s^{3} t^{3} x^{5} - p q^{3} u^{2} v^{4} s t^{2} x^{5} - p q^{3} u^{2} v^{3} s^{2} t^{2} x^{5} - p q^{3} u^{2} v^{2} s^{3} t^{2} x^{5}\\
    &\quad  - p q^{3} u^{2} v s^{4} t^{2} x^{5} + p^{3} q u^{4} v s t^{2} x^{5} + p^{3} q u^{3} v s t^{3} x^{5}+ p^{3} q u^{2} v s t^{4} x^{5} - p^{2} q^{2} u^{3} v^{2} s t^{2} x^{5}\\
    &\quad   - p^{2} q^{2} u^{3} v s^{2} t^{2} x^{5} - p^{2} q^{2} u^{2} v^{3} s t^{2} x^{5} - p^{2} q^{2} u^{2} v^{2} s t^{3} x^{5} - p^{2} q^{2} u^{2} v s^{3} t^{2} x^{5} - p^{2} q^{2} u^{2} v s^{2} t^{3} x^{5}\\
    &\quad  + p q^{3} u^{2} v^{2} s^{2} t^{2} x^{5}  + p q^{2} u^{2} v^{3} s^{3} t^{2} x^{4} - q^{3} u v^{4} s^{4} t x^{4} + 3 p^{2} q^{2} u^{3} v s t^{2} x^{5}+ 3 p^{2} q^{2} u^{2} v s t^{3} x^{5}  \\
    &\quad + p^{2} q u^{3} v^{2} s^{2} t^{2} x^{4} + p^{2} q u^{2} v^{2} s^{2} t^{3} x^{4} - p q^{2} u^{2} v^{3} s^{3} t x^{4} - p q^{2} u v^{3} s^{3} t^{2} x^{4}+ p^{4} q u^{3} t^{2} x^{5}\\
    &\quad   + p^{4} q u^{2} t^{3} x^{5} + p^{2} q^{2} u^{2} v s t^{2} x^{5} - p^{2} q u^{3} v^{2} s^{2} t x^{4}  - p^{2} q u^{3} v^{2} s t^{2} x^{4}  - p^{2} q u^{3} v s^{2} t^{2} x^{4}\\
    &\quad - p^{2} q u^{2} v^{2} s^{2} t^{2} x^{4} - p^{2} q u^{2} v^{2} s t^{3} x^{4} - p^{2} q u^{2} v s^{2} t^{3} x^{4} - p^{2} q u v^{2} s^{2} t^{3} x^{4} + p q^{3} u^{2} v s t^{2} x^{5}\\
    &\quad - p q^{2} u^{2} v^{3} s^{2} t x^{4}  - p q^{2} u^{2} v^{3} s t^{2} x^{4} - p q^{2} u^{2} v^{2} s^{3} t x^{4} - p q^{2} u^{2} v s^{3} t^{2} x^{4} - p q^{2} u v^{3} s^{3} t x^{4} \\
    &\quad - p q^{2} u v^{3} s^{2} t^{2} x^{4} - p q^{2} u v^{2} s^{3} t^{2} x^{4} + 5 p^{3} q^{2} u^{2} t^{2} x^{5} + p^{2} q u^{3} v s t^{2} x^{4} + p^{2} q u^{2} v s t^{3} x^{4}\\
    &\quad  + p q^{2} u^{2} v s t^{2} x^{4} + p q u^{2} v^{2} s^{2} t^{2} x^{3} - q^{2} u v^{3} s^{3} t x^{3}+ p^{3} q u^{3} t x^{4} + p^{3} q u^{2} t^{2} x^{4} + p^{3} q u t^{3} x^{4}\\
    &\quad   - p q u^{2} v^{2} s^{2} t x^{3} - p q u^{2} v^{2} s t^{2} x^{3} - p q u^{2} v s^{2} t^{2} x^{3} - p q u v^{2} s^{2} t^{2} x^{3} + 3 p^{2} q^{2} u^{2} t x^{4} + 3 p^{2} q^{2} u t^{2} x^{4} \\
    &\quad  + p q u^{2} v s t^{2} x^{3} + p^{2} q^{2} u t x^{4} + p q^{3} u t x^{4} + p^{2} q u^{2} t x^{3} + p^{2} q u t^{2} x^{3} - q u v^{2} s^{2} t x^{2} + p q^{2} u t x^{3}  - 1.
\end{align*}

\end{small}
\end{thm}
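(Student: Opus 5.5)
Following the pattern of the proofs of Theorems~\ref{44-thm} and \ref{45-thm}, I will set $j=\ell=5$ in Theorem~\ref{thm jll} and write
\[
F_{5,5}=1+\sum_{i=0}^{3}\bigl(f_{5,5,i}+g_{5,5,i}\bigr).
\]
The terms $f_{5,5,0}$ and $g_{5,5,0}=0$ come directly from the initial conditions \eqref{eq:5.4}. For $i=1,2,3$ the sums in \eqref{eq:5.2} run over $a\in\{1,2\}$. In \eqref{eq:5.3} the index $h$ runs over $1\le h\le\min\{i-1,1\}$, so $h=1$ only, and this occurs only for $i=2,3$. Then $m$ runs over $1\le m\le \ell-h-3=1$, so $m=1$.

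Thus the only data needed are low-order coefficients, which I will take from the Remark after Theorem~\ref{thm jll}. These are:
\begin{itemize}
\item $U_a$, the coefficient of $x^a$ in $G_{5-i}$, for $a=1,2$;
\item $V_i$, from $\widetilde G_{4-a}$, for $i\le 3$;
\item $W_i$, from $\widetilde G_4$, for $i\le 3$;
\item $X_1=uvst$;
\item $Z_1=uvst$, from $G_{5-(i-1)}$;
\item $Y_{i-1}$, from $\widetilde G_{2}$, for $i-1\le 2$.
\end{itemize}
Concretely, $U_2$ equals $pu^2vst^2+quv^2s^2t$ when $5-i\ge3$, that is $i\le2$, and equals $pu^2vst^2$ when $i=3$. $V_3$ from $\widetilde G_3$ is the four-term cubic listed in the Remark, and $V_3$ from $\widetilde G_2$ is $p^2u^3vst^3$. $W_3$ is the six-term cubic of $\widetilde G_4$. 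Finally $Y_1=uvst$ and $Y_2=pu^2vst^2$. Each coefficient is then specialized as prescribed (e.g.\ $U_a(p,q,u,1,s,1)$, $V_i(p,q,1,v,1,t)$).

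Next I will substitute everything to get a single linear functional equation of the form
\[
F_{5,5}=A(x,p,q,u,v,s,t)+B(x,p,q,u,v,s,t)\,\bigl(F_{5,5}(x,p,q,u,1,s,t)-1\bigr),
\]
analogous to \eqref{eq:5.23} and \eqref{458}. Here $A$ and $B$ are explicit polynomials of degree at most $6$ in $x$. Setting $v=1$ gives a linear equation in $F_{5,5}(x,p,q,u,1,s,t)$ alone, with solution
\[
F_{5,5}(x,p,q,u,1,s,t)=\frac{A|_{v=1}-B|_{v=1}}{1-B|_{v=1}}.
\]
As a check, $1-B|_{v=1}$ should equal $-F_m$ (up to sign). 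This works because $B$ involves $v$ only through $W_i(p,q,1,v,1,t)$ and similar factors, and these all become functions of $u,t$ after $v=1$.

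Substituting back yields $F_{5,5}=\bigl(A F_m' + B\cdot(\text{stuff})\bigr)/F_m$. Expanding the numerator gives $F_1+\cdots+F_5$.

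Two sanity checks follow. Setting $p=q=u=v=s=t=1$, the denominator should become $1-x-x^2-3x^3-7x^4-7x^5-x^6$ up to sign, which is consistent with the coefficient sums of $F_m$. The series should also match the counts from Theorem~\ref{bij}, i.e.\ restricted permutations with $-3\le\pi_i-i\le3$, via Proposition~\ref{Proposition1}; this applies only for $j=3$ or $\ell=3$. For $5,5$ instead, I will compare with a brute-force enumeration of separable permutations for small $n$.

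The main obstacle is bookkeeping rather than conceptual. The delicate points are that $U_2$ depends on $i$ through $G_{5-i}$, that $V_i$ depends on $a$ through $\widetilde G_{4-a}$, and that the extra $g$-terms appear only at $i=2,3$. Each of these must be tracked correctly. The final expansion into 293 monomials I would do by computer algebra.
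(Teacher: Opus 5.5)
Your proposal is correct and is essentially the paper's own proof: you specialize Theorem~\ref{thm jll} to $j=\ell=5$ with exactly the coefficient data the paper uses ($U_2$ depending on $i$, $V_i$ depending on $a$, and the single extra $h=m=1$ term for $i=2,3$ with $X_1=Z_1=Y_1=uvst$ and $Y_2=pu^2vst^2$), collect everything into the linear equation \eqref{558}, solve at $v=1$, and substitute back. There is one slip in your sanity check: the $x^4$-coefficients of $F_m$ sum to $11$, not $7$, so at $p=q=u=v=s=t=1$ the denominator is $-(1-x-x^2-3x^3-11x^4-7x^5-x^6)$; with $7$ you would get $18$ at $n=4$ instead of the correct $22$. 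Also, if you compare intermediate expressions with the paper, its displayed $g_{5,5,3}$ mistakenly keeps the $u$-powers of $W_3$, whereas the correct specialization $W_3(p,q,1,v,s,t)$ is the one consistent with the final $F_m$.
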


\begin{proof}
Let $j=\ell=5$ in Theorem \ref{thm jll}. When $i=1$, $U_1=uvst$,  $U_2=pu^2vst^2+quv^2s^2t$, $V_1=uvst$, and $W_1=uvst$.  When $i=2$, $U_1=uvst$, $U_2=pu^2vst^2+quv^2s^2t$, and $W_2=pu^2vst^2+quv^2s^2t$, for the case $a=1$, $V_2=pu^2vst^2+quv^2s^2t$, for the case $a=2$, $V_2=pu^2vst^2$. When $i=3$, $U_1=uvst$,   $U_2=pu^2vst^2$, and $W_3=p^2u^3vst^3+pqu^2v^2st^2+pqu^2vs^2t^2+pqu^2v^2s^2t+pquv^2s^2t^2+q^2uv^3s^3t$, for the case $a=1$, $V_3=pquv^2s^2t^2+pqu^2vs^2t^2+pqu^2v^2st^2+p^2u^3vst^3$, for the case $a=2$, $V_3=p^2u^3vst^3$.
Then,
\begin{align}\label{551}
F_{5,5} &= 1  + f_{5,5,0} + g_{5,5,0}+  f_{5,5,1} + g_{5,5,1}+  f_{5,5,2} + g_{5,5,2}+  f_{5,5,3} + g_{5,5,3}
\end{align}
where
\begin{align}
f_{5,5,0} & = uvstx + puvtx\left(F_{5,5}(x, p, q, u, 1, s, t) - 1\right), \label{552}\\
g_{5,5,0} & = 0,\label{553} \\ \nonumber
\end{align}
\begin{align}
f_{5,5,1} 
&= pquvx \Bigl( U_1(p,q,u,1,s,1)  x  V_{1}(p,q,1,v,s,t)  x + U_2(p,q,u,1,s,1)  x^2  V_{1}(p,q,1,v,s,t)  x \Bigr) \nonumber \\
&\quad +\ p^2quvx \Bigl( U_1(p,q,u,1,1,1)x V_{1}(p,q,1,v,1,t)  x \nonumber \\
&\quad + U_2(p,q,u,1,1,1) x^2  V_{1}(p,q,1,v,1,t)  x \Bigr)  \bigl( F_{5,5}(x,p,q,u,1,s,t) - 1 \bigr) \nonumber \\
&= pqu^2v^2s^2tx^3 + p^2qu^3v^2s^2tx^4 + pq^2u^2v^2s^3tx^4 \nonumber \\
&\quad + (p^2qu^2v^2tx^3+p^3qu^3v^2tx^4+p^2q^2u^2v^2tx^4 )\bigl(F_{5,5}(x,p,q,u,1,s,t) - 1\bigr) ,\label{554}\\
g_{5,5,1} 
&= quvsxW_1(p,q,1,v,s,t)x + pquvxW_1(p,q,1,v,1,t)x\left( F_{5,5}(x,p,q,u,1,s,t) - 1\right)\nonumber \\
&= quv^2s^2tx^2 + pquv^2tx^2\left( F_{5,5}(x,p,q,u,1,s,t) - 1\right),\label{555}\\ 
f_{5,5,2}
&= pquvx\Bigl( U_1(p,q,u,1,s,1) x  V_{2}(p,q,1,v,s,t)  x^2 + U_2(p,q,u,1,s,1) x^2 V_{2}(x,p,q,1,v,s,t)  x^2 \Bigr) \nonumber \\
&\quad + p^2quvx \Bigl( U_1(p,q,u,1,1,1)xV_{2}(p,q,1,v,1,t) x^2 \nonumber \\
&\quad + U_2(p,q,u,1,1,1)x^2  V_{2}(p,q,1,v,1,t)  x^2 \Bigr)  \bigl( F_{5,5}(x,p,q,u,1,s,t) - 1 \bigr) \nonumber \\
&= p^2qu^2v^2s^2t^2x^4 + pq^2u^2v^3s^3tx^4 + p^3qu^3v^2s^2t^2x^5 \nonumber \\
&\quad +p^2q^2u^2v^2s^3t^2x^5+ p^3qu^2v^2t^2x^4  \bigl( F_{5,5}(x,p,q,u,1,s,t) - 1 \bigr) \nonumber \\
&\quad +(p^2q^2u^2v^3tx^4+p^4qu^3v^2t^2x^5+p^3q^2u^2v^2t^2x^5)\bigl( F_{5,5}(x,p,q,u,1,s,t) - 1 \bigr) ,\label{5561} \\
g_{5,5,2}
&= quvsx  W_2(p,q,1,v,s,t)  x^2+ pquvx W_2(p,q,1,v,1,t)  x^2  \bigl(F_{5,5}(x,p,q,u,1,s,t) - 1\bigr) \nonumber \\
&\quad +pq^2uvxX_1(p,q,1,v,1,1)xZ_1(p,q,u,1,s,1)xY_{1}(p,q,1,v,s,t)x\nonumber \\
           &\quad + p^2q^2uvxX_1(p,q,1,v,1,1)x
               Z_1(p,q,u,1,1,1)x Y_{1}(p,q,1,v,1,t)x \nonumber \\
&\quad \cdot \big(F_{5,5}(x,p,q,u,1,s,t)-1\big)\nonumber\\
&= q^2uv^3s^3tx^3 + pquv^2t^2s^2x^3 + pq^2u^2v^3s^2tx^4 \nonumber \\
&\quad + (pq^2uv^3tx^3+p^2quv^2t^2x^3 +p^2q^2u^2v^3tx^4) \bigl(F_{5,5}(x,p,q,u,1,s,t) - 1\bigr).\label{557}\\
f_{5,5,3}
&= pquvx \Bigl( U_1(p,q,u,1,s,1) x  V_{3}(p,q,1,v,s,t) x^3  + U_2(p,q,u,1,s,1)  x^2  V_{3}(p,q,1,v,s,t)  x^3 \Bigr) \nonumber \\
&\quad + p^2quvx \Bigl( U_1(p,q,u,1,1,1)x V_{3}(p,q,1,v,1,t)x^3 +U_2(p,q,u,1,1,1)x^2  V_{3}(p,q,1,v,1,t) x^3 \Bigr) \nonumber \\
&\quad \cdot \bigl( F_{5,5}(x,p,q,u,1,s,t) - 1 \bigr) \nonumber \\
&= p^2q^2u^2v^3s^3t^2x^5 + p^2q^2u^2v^2s^3t^2x^5 + p^2q^2u^2v^3s^2t^2x^5 + p^3qu^2v^2s^2t^3x^5\nonumber \\
&\quad  + p^4qu^3v^2s^2t^3x^6 + \bigl(p^3q^2u^2v^3t^2x^5 + p^3q^2u^2v^2t^2x^5\nonumber \\
&\quad  + p^3q^2u^2v^3t^2x^5  + p^4qu^2v^2t^3x^5 + p^5qu^3v^2t^3x^6 \bigr)\bigl( F_{5,5}(x,p,q,u,1,s,t) - 1 \bigr),\label{556}\\
g_{5,5,3}
&= quvsx  W_3(p,q,1,v,s,t) x^3 + pquvx W_3(p,q,1,v,1,t)  x^3 \bigl(F_{5,5}(x,p,q,u,1,s,t) - 1\bigr) \nonumber \\
&\quad + pq^2uvx  X_1(p,q,1,v,1,1)  x Z_1(p,q,u,1,s,1)  x Y_{2}(p,q,1,v,s,t) x^2\nonumber \\
&\quad + p^2q^2uvx  X_1(p,q,1,v,1,1)  x Z_1(p,q,u,1,1,1)  x  Y_{2}(p,q,1,v,1,t) x^2 \nonumber \\
&\quad \cdot  \bigl(F_{5,5}(x,p,q,u,1,s,t)-1\bigr) \nonumber \\
&= p^2qu^4v^2s^2t^3x^4 + pq^2u^3v^3s^2t^2x^4 + pq^2u^3v^2s^3t^2x^4 \nonumber \\
&\quad + pq^2u^3v^3s^3tx^4 + pq^2u^2v^3s^3t^2x^4 + q^3u^2v^4s^4tx^4 + p^2q^2u^2v^3s^2t^2x^5\nonumber \\
&\quad  + \bigl(p^3qu^4v^2t^3x^4  + p^2q^2u^3v^3t^2x^4 + p^2q^2u^3v^2t^2x^4  + p^2q^2u^3v^3tx^4 \nonumber \\
&\quad + p^2q^2u^2v^3t^2x^4 + pq^3u^2v^4tx^4  + p^3q^2u^2v^3t^2x^5 \bigr) \bigl(F_{5,5}(x,p,q,u,1,s,t) - 1\bigr),\label{5571}
\end{align}
Substituting \eqref{552}--\eqref{5571} into \eqref{551} yields
\begin{align}\label{558}
F_{5,5}
&= 1 + uvstx + puvtx\left(F_{5,5}(x, p, q, 1, v, s, t) - 1\right)\nonumber \\
&\quad + pqu^2v^2s^2tx^3 + p^2qu^3v^2s^2tx^4 + pq^2u^2v^2s^3tx^4 \nonumber \\
&\quad + (p^2qu^2v^2tx^3+p^3qu^3v^2tx^4+p^2q^2u^2v^2tx^4)\bigl(F_{5,5}(x,p,q,u,1,s,t) - 1\bigr)\nonumber \\
&\quad + quv^2s^2tx^2 + pquv^2tx^2\left( F_{5,5}(x,p,q,u,1,s,t) - 1\right)\nonumber \\
&\quad + p^2qu^2v^2s^2t^2x^4 + pq^2u^2v^3s^3tx^4 + p^3qu^3v^2s^2t^2x^5 \nonumber \\
&\quad + p^2q^2u^2v^2s^3t^2x^5 + p^3qu^2v^2t^2x^4  \bigl( F_{5,5}(x,p,q,u,1,s,t) - 1 \bigr) \nonumber \\
&\quad + (p^2q^2u^2v^3tx^4+p^4qu^3v^2t^2x^5+p^3q^2u^2v^2t^2x^5)\bigl( F_{5,5}(x,p,q,u,1,s,t) - 1 \bigr)\nonumber \\
&\quad + q^2uv^3s^3tx^3 + pquv^2t^2s^2x^3 + pq^2u^2v^3s^2tx^4 \nonumber \\
&\quad + (pq^2uv^3tx^3+p^2quv^2t^2x^3 +p^2q^2u^2v^3tx^4) \bigl(F_{5,5}(x,p,q,u,1,s,t) - 1\bigr)\nonumber \\
&\quad + p^2q^2u^2v^3s^3t^2x^5 + p^2q^2u^2v^2s^3t^2x^5 + p^2q^2u^2v^3s^2t^2x^5 + p^3qu^2v^2s^2t^3x^5\nonumber \\
&\quad + p^4qu^3v^2s^2t^3x^6 + \bigl(p^3q^2u^2v^3t^2x^5 + p^3q^2u^2v^2t^2x^5\nonumber \\
&\quad + p^3q^2u^2v^3t^2x^5  + p^4qu^2v^2t^3x^5 + p^5qu^3v^2t^3x^6 \bigr)\bigl( F_{5,5}(x,p,q,u,1,s,t) - 1 \bigr)\nonumber \\
&\quad + p^2qu^4v^2s^2t^3x^4 + pq^2u^3v^3s^2t^2x^4 + pq^2u^3v^2s^3t^2x^4 \nonumber \\
&\quad + pq^2u^3v^3s^3tx^4 + pq^2u^2v^3s^3t^2x^4 + q^3u^2v^4s^4tx^4 + p^2q^2u^2v^3s^2t^2x^5\nonumber \\
&\quad + \bigl(p^3qu^4v^2t^3x^4  + p^2q^2u^3v^3t^2x^4 + p^2q^2u^3v^2t^2x^4  + p^2q^2u^3v^3tx^4 \nonumber \\
&\quad + p^2q^2u^2v^3t^2x^4 + pq^3u^2v^4tx^4  + p^3q^2u^2v^3t^2x^5 \bigr) \bigl(F_{5,5}(x,p,q,u,1,s,t) - 1\bigr)
\end{align}
Letting $v$=1 in \eqref{558}, and solving for $F_{5,5}(x,p,q,u,1,s,t)$, we have
\begin{equation}\label{5510}
F_{5,5}(x,p,q,u,1,s,t)= \frac{E_1}{E_2},
\end{equation}
where
\begin{align*}
E_2 &= p^{5} q u^{3} t^{3} x^{6} + p^{4} q u^{3} t^{2} x^{5} + p^{4} q u^{2} t^{3} x^{5} + 5 p^{3} q^{2} u^{2} t^{2} x^{5} + p^{3} q u^{3} t x^{4} \\
    &\quad + p^{3} q u^{2} t^{2} x^{4} + p^{3} q u t^{3} x^{4} + 3 p^{2} q^{2} u^{2} t x^{4} + 3 p^{2} q^{2} u t^{2} x^{4} + p^{2} q^{2} u t x^{4} \\
    &\quad + p q^{3} u t x^{4} + p^{2} q u^{2} t x^{3} + p^{2} q u t^{2} x^{3} + p q^{2} u t x^{3} + p q u t x^{2} + p u t x - 1, \\[4pt]
E_1 &= \begin{aligned}[t]
      &- p^{4} q u^{3} s^{2} t^{3} x^{6} + p^{5} q u^{3} t^{3} x^{6} - p^{3} q u^{3} s^{2} t^{2} x^{5} - p^{3} q u^{2} s^{2} t^{3} x^{5} - 3 p^{2} q^{2} u^{2} s^{3} t^{2} x^{5}\\
      & + p^{4} q u^{3} t^{2} x^{5} + p^{4} q u^{2} t^{3} x^{5} - 2 p^{2} q^{2} u^{2} s^{2} t^{2} x^{5} + 5 p^{3} q^{2} u^{2} t^{2} x^{5} - p^{2} q u^{3} s^{2} t x^{4}\\
      & - p^{2} q u^{2} s^{2} t^{2} x^{4} - p^{2} q u s^{2} t^{3} x^{4} - 2 p q^{2} u^{2} s^{3} t x^{4} - 2 p q^{2} u s^{3} t^{2} x^{4} - q^{3} u s^{4} t x^{4} \\
      &+ p^{3} q u^{3} t x^{4} + p^{3} q u^{2} t^{2} x^{4} + p^{3} q u t^{3} x^{4} - p q^{2} u^{2} s^{2} t x^{4} - p q^{2} u s^{3} t x^{4} - p q^{2} u s^{2} t^{2} x^{4}  \\
      & + 3 p^{2} q^{2} u^{2} t x^{4}+ 3 p^{2} q^{2} u t^{2} x^{4} + p^{2} q^{2} u t x^{4} + p q^{3} u t x^{4} - p q u^{2} s^{2} t x^{3} - p q u s^{2} t^{2} x^{3} \\
      &  q^{2} u s^{3} t x^{3}+ p^{2} q u^{2} t x^{3} + p^{2} q u t^{2} x^{3} + p q^{2} u t x^{3} - q u s^{2} t x^{2} + p q u t x^{2} + p u t x - u s t x - 1.
\end{aligned}
\end{align*}
Substituting \eqref{5510} into the right-hand side of \eqref{558} yields the desired formula for $F_{5,5}$.
\end{proof}

\begin{cor}
The g.f.\ $F_{4,5}(x)$ is given by  
\begin{align*}
\frac{1}{1-x-x^2-3x^3-11x^4-7x^5-x^6}=1 + x + 2x^2 + 6x^3 + 22x^4 + 58x^5 + 137x^6 + 385x^7 + O(x^8).
\end{align*}
\end{cor}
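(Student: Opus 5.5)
This corollary is, despite its label, the counting statement for $F_{5,5}(x)$, i.e.\ for $S_n(2413,3142,P_5,\widetilde{P}_5)$. The plan mirrors the earlier corollaries: set $p=q=u=v=s=t=1$ in Theorem~\ref{55-thm}. The cleanest route is not to specialize the huge numerator $F_1+\cdots+F_5$ directly. Instead I would specialize the functional equation \eqref{558}, where the specialization is easy to control.

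First, at $v=1$ the specialized function $F_{5,5}(x,1,1,1,1,1,1)$ coincides with $F_{5,5}(x,p,q,u,1,s,t)$ evaluated at $p=q=u=s=t=1$. So it suffices to evaluate \eqref{5510}, namely $E_1/E_2$, at all variables equal to $1$. The denominator is immediate: $E_2(1)$ sums the monomial coefficients grouped by power of $x$. This gives
\begin{align*}
E_2(1)&=-1+x+x^2+3x^3+(1+1+1+3+3+1+1)x^4+(1+1+5)x^5+x^6\\
&=-(1-x-x^2-3x^3-11x^4-7x^5-x^6).
\end{align*}

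Second, I would show that the numerator specializes to $-1$. Equivalently, at all-ones, \eqref{558} reads $F=1+A+B(F-1)$, where $A$ is the sum of the ``free'' terms and $B$ the sum of the coefficients of $(F-1)$. Then $F=(1+A-B)/(1-B)$, and $1-B=-E_2(1)$ by construction. It therefore suffices to check $A(1)=B(1)$ coefficientwise in $x$. This identity is expected, since each free term in Theorem~\ref{thm jll} is paired with a term multiplying $F-1$, and the two differ only by the specialized statistics. Counting terms in \eqref{558} gives, for each power of $x$, equal totals: $1,1,3,11,7,1$ for $x^1,\dots,x^6$. For instance, the $x^4$ free terms are $p^2qu^3v^2s^2t$, $pq^2u^2v^2s^3t$, $p^2qu^2v^2s^2t^2$, $pq^2u^2v^3s^3t$, $pq^2u^2v^3s^2t$, together with six from $g_{5,5,3}$, for a total of $11$; the bracketed $x^4$ terms also total $11$.

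Hence $F_{5,5}(x)=1/(1-x-x^2-3x^3-11x^4-7x^5-x^6)$. The displayed series then follows by expanding via $a_n=a_{n-1}+a_{n-2}+3a_{n-3}+11a_{n-4}+7a_{n-5}+a_{n-6}$ with $a_0=1$. For example, $a_4=6+2+3+11=22$ and $a_5=22+6+6+11+7=52$.

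The main obstacle is bookkeeping. In \eqref{558}, duplicated terms such as the two $p^3q^2u^2v^3t^2x^5$ must each be counted, so the term tallies need care. A further caution is that my quick recurrence check gives $a_5=52$ rather than the stated $58$. I would recheck this against a direct enumeration of small cases (or against Theorem~\ref{bij} with Balti\'c's restricted-permutation count). If the denominator is confirmed, the series coefficients in the statement should be corrected accordingly.
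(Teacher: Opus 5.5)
Your argument is correct and follows the paper's own route. The paper sets $p=q=u=v=s=t=1$ in Theorem~\ref{55-thm}; you do the same specialization one step earlier, in \eqref{558}. By the structure of Theorem~\ref{thm jll}, every free term there has a partner multiplying $F-1$ that coincides with it at all ones. Hence $A(1)=B(1)=x+x^2+3x^3+11x^4+7x^5+x^6$ and $F_{5,5}(x)=1/(1-B(1))$. Your term tallies and your value of $E_2(1)$ are correct.

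Your doubt about the series is justified, and you can settle it rather than leave it open. For $n=5$, an occurrence of $P_5$ (resp.\ $\widetilde{P}_5$) forces $\pi_1=5$ (resp.\ $\pi_5=1$), that is, $\pi=1\ominus\sigma$ (resp.\ $\sigma\ominus 1$). Among the $90$ separable permutations of length $5$, this leaves $90-(22+22-6)=52$. A similar inclusion--exclusion over the $394$ separable permutations of length $6$ gives $122$. So the rational function is right, and the printed coefficients $58,137,385$ are misprints, as is the label $F_{4,5}$, which should read $F_{5,5}$. The correct expansion is $1+x+2x^2+6x^3+22x^4+52x^5+122x^6+321x^7+O(x^8)$. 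This is exactly what the paper's one-line specialization yields as well.
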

\begin{proof}
Setting $p=q=u=v=s=t=1$  in Theorem~\ref{55-thm}, we get the desired result.
\end{proof}

\section{Concluding remarks}

In this paper, we initiated the study of permutations simultaneously avoiding more than one partially ordered pattern, thereby extending the results of \cite{GaoKitaev2019}, \cite{Gao2026}, and several other related works. While we obtained several explicit enumerative results, we were not able to generalize the most comprehensive result of \cite{Gao2026}, namely the system of equations that yields generating functions for separable permutations avoiding an arbitrary POP $P_j$.

Indeed, imposing the additional avoidance of $\widetilde{P}_{\ell}$ does not significantly simplify the enumeration problem. Although avoiding more patterns often leads to a more rigid structural description of the permutations involved, in our setting the number of cases to analyze increases substantially, and the resulting generating functions become considerably more intricate. This increased complexity is reflected, for example, in the large number of monomials appearing in both the numerator and the denominator of the rational generating functions we derive.

An interesting open problem is whether one can develop a general system of equations that determines the number of separable permutations simultaneously avoiding $P_j$ and $\widetilde{P}_{\ell}$ for $j,\ell \geq 4$ (if one of $j$ or $\ell$ equals $3$, the restriction to separable permutations is superfluous by Propostion~\ref{Proposition1}). If such a system exists, it would be natural to investigate the computational complexity of deriving the corresponding generating function for arbitrary values of $j$ and $\ell$.

\section*{Acknowledgements.}
The work of the first, second, and fourth authors is supported by the Fundamental Research Funds for the Central Universities.

\end{document}